\newtheorem{theo}{Theorem}
\newtheorem{corollary}[theo]
{Corollary}
\newtheorem{lem}[theo]
{Lemma}
\newtheorem{proposition}[theo]
{Proposition}
\theoremstyle{definition}
\newtheorem{defi}[theo]
{Definition}
\newtheorem{remark}[theo]
{Remark}
\newcommand\R{{\mathbb{R}}}
\newcommand\ptl{{\partial}}
\newcommand\Char{{1\!\!1}}
\newcommand\sgn{{\rm sgn}}
\newcommand\conv{{\rm conv}}
\newcommand\ra{\rightarrow}
\newcommand\da{\downarrow}
\newcommand\mt{\mapsto}
\DeclareMathOperator*{\esssup}{ess \, sup}
\DeclareMathOperator*{\essinf}{ess \, inf}
\title[$L^1$ kinetic theory for fractional conservation laws]{Nonlocal dissipation measure and $L^1$ kinetic theory for fractional conservation laws}
\author[N. Alibaud]{Natha\"{e}l Alibaud}
\address[Natha\"{e}l Alibaud]{\'Ecole Nationale Sup\'erieure de M\'ecanique et des Microtechniques,
26 chemin de l'\'Epitaphe\\
and\\
Laboratoire de Math\'ematiques de Besan\c{c}on
UMR CNRS 6623,\\
Universit\'e de Bourgogne Franche-Comt\'e\\
16 route de Gray\\
25 030 Besan\c{c}on cedex,
France}
\email{nathael.alibaud\@@{}ens2m.fr}
\author[B. Andreianov]{Boris Andreianov}
\address[Boris Andreianov]{Institut Denis Poisson CNRS UMR7013\\
Universit\'e de Tours, Universit\'e d'Orl\'eans\\
Parc de Grandmont, 37200 Tours, FRANCE}
\email{boris.andreianov@univ-tours.fr}
\author[A. Ou\'edraogo]{Adama Ou\'edraogo}
\address[Adama Ou\'edraogo]{D\'epartement de Math\'ematiques,
Universit\'e Nazi Boni,
01 BP 1091 Bobo-Dioulasso, Burkina-Faso}
\email{adam\_ouedraogo3@yahoo.fr}
\subjclass[2010]{primary 35R11, 35L65; secondary 35K59, 35D99}
\keywords{Kinetic formulation, fractional conservation law, nonlocal and nonlinear diffusion, pure jump L\'evy operator,  
nonlinearity of porous medium kind,  dissipation measure, well-posedness, $L^1$ data}
\thanks{This research was supported by the ``French ANR
  project CoToCoLa, no. ANR-11-JS01-006-01.''  The second author thanks J.-Ph. Anker for a helpful discussion.}
\begin{document}

\thispagestyle{plain}

 \begin{abstract}
 We introduce a kinetic formulation for scalar conservation laws 
with nonlocal and nonlinear diffusion terms. We deal with merely $L^1$ initial data, general self-adjoint pure jump L\'evy operators, and locally Lipschitz nonlinearities of porous medium kind possibly strongly degenerate. The cornerstone of the formulation and the uniqueness proof is an adequate explicit representation of the nonlocal dissipation measure. This approach is inspired from the second order theory unlike the cutting technique previously introduced for bounded entropy solutions. The latter technique no longer seems to fit our setting. This is moreover the first time that the more standard and sharper tools of the second order theory are faithfully adapted to fractional conservation laws.
\end{abstract}

\maketitle

\tableofcontents

\newpage

\section{Introduction}

 In this paper, we define a kinetic formulation for scalar conservation laws with nonlocal and nonlinear diffusion terms. We consider initial-value problems of the form 
\begin{equation}\label{cauchy-problem}
\begin{cases}
\partial_{t}u+\nabla \cdot F(u)+g[A(u)]=0, & \quad (t,x) \in \R^+ \times \R^d,\\[1ex]
u(t=0,x)= u_0(x), & \quad x \in \R^d,
\end{cases}
\end{equation}
where $u=u(t,x)$ is the unknown function, $\nabla$ denotes the gradient operator with respect to $x$ and $g$ is an integro-differential operator properly defined at least on~$\mathcal{D}(\R^d)$ by
\begin{equation}\label{def-g}
g[\varphi](x):=- P.V. \int_{\R^d} (\varphi(x+z)-\varphi(x)) \mu(z) \, dz,
\end{equation}
where $\mu$ is a (Borel) measure and $\mu(z) \, dz$ abusively stands for $d \mu(z)$ or $\mu(dz)$. Throughout, the initial data $u_0$ is assumed merely integrable, and the other data are assumed to satisfy the following conditions:
\begin{align}
& F \in W^{1,\infty}_{\rm loc}(\R,\R^d){,}\label{condition-on-F}\\
& A \in W^{1,\infty}_{\rm loc}(\R) \text{ and is nondecreasing}{,}\label{condition-on-A}\\
& \mbox{$\mu \geq 0$ with $\mu(\{0\})=0$ and $\int_{\R^d} (|z|^2 \wedge 1) \mu(z) \, dz<\infty${,}}\label{condition-on-mu}\\
& \mbox{$\mu$ is even, i.e., it is invariant by the application $z \mt -z$.}\label{condition-symmetry}
\end{align}
 The principal value in \eqref{def-g} is defined as the limit
\begin{equation}\label{def-PV}
g[\varphi](x):=-\lim_{r \da 0} \int_{|z|>r} \left(\varphi(x+z)-\varphi(x)\right) \mu(z) \, dz,
\end{equation}
which makes sense because of \eqref{condition-on-mu}--\eqref{condition-symmetry}. For all fixed $r>0$, we also  have  
\begin{equation}\label{Levy-form}
g[\varphi](x)=-\int_{\R^d} \left(\varphi(x+z)-\varphi(x)-z \cdot \nabla \varphi(x) \mathbf{1}_{|z| \leq r}\right) \mu(z) \, dz.
\end{equation}

 The operators of the form \eqref{Levy-form} correspond to generators of pure jump L\'evy processes. 
By \eqref{condition-symmetry} we restrict to self-adjoint operators. 
They  constitute a general class of nonlocal diffusive operators  \cite{Sat99}. Their use in scalar conservation laws goes back to \cite{Ros89} on the Chapman-Enskog expansion involving a convolution operator corresponding to $\int \mu<\infty$. Scalar conservation laws with singular nonlocal diffusions were considered later  for instance in semiconductor growth~\cite{Woy01} 
or gas
detonations~\cite{Cla02}. 
A typical example of such a diffusion is the fractional Laplacian $(-\Delta)^\frac{\alpha}{2}$, $\alpha \in (0,2)$, corresponding to $\mu(z)=c\, |z|^{-d-\alpha}$; see \cite{Lan72}. 
More recent models led to further study scalar conservation laws with nonlocal and nonlinear diffusions; see for instance  \cite{RoYo07} on radiation hydrodynamics. Our setting covers all these problems.
As a byproduct, it includes the nonlocal diffusion equation $\partial_tu+g[A(u)]=0$ and allows for porous medium nonlinearities $A(u)=|u|^{m-1} u$ with $m \geq 1$; see \cite{DeQuRoVa11,DeQuRoVa12,DTEnJa17b,DTEnJa17a} for an extensive account on these PDEs. 
For other related PDEs with nonlocal and nonlinear diffusions, 
 see
 \cite{BiKaIm10,BiKaMo10,CaVa10} 
and the references therein.

\subsubsection*{State-of-the-art}


Equation~\eqref{cauchy-problem} may degenerate and should share properties with the scalar hyperbolic conservation law
\begin{equation}\label{scl}
\ptl_t u+\nabla \cdot F(u)=0
\end{equation}
and the degenerate parabolic equation
\begin{equation}\label{dpe}
\ptl_t u+\nabla \cdot F(u)-\triangle A(u)=0.
\end{equation}
Their main difficulties 
are the possible creation of singularities and the nonuniqueness of weak solutions. Since the fundamental work of Kruzhkov
\cite{Kru70} who defined entropy solutions
 for first order equations  
and established well-posedness in the $L^\infty$ framework, many  other well-posedness results 
were obtained.
We refer to
Carrillo \cite{Car99} for the entropy formulation  of  elliptic-parabolic-hyperbolic  problems involving 
Leray-Lions type operators.
The more delicate anisotropic diffusion case has been treated
by Bendahmane and Karlsen
in~\cite{BeKa04} using, in particular, the insight from the paper~\cite{ChPe03} of Chen and Perthame. The setting of \cite{ChPe03} is different because the kinetic formulation is used to achieve well-posedness;
this concept goes back to \cite{Bre83,GiMi83} and to the classical work \cite{LiPeTa91,LiPeTa94} of Lions, Perthame and Tadmor. An extensive account on the kinetic formulation of conservation laws can be found in \cite{Per09}.
One of the advantages of the kinetic formulation is that the $L^1$ space  is natural  for both existence and  uniqueness. 

{S}everal authors have
extended the notion of  $L^\infty$  entropy solutions to nonlocal problems 
of the form \eqref{cauchy-problem}. We refer  to \cite{KaNi98,KaNi99,LiTa01,LaMa03,Ser03,RoYo07} and the references therein for  the case $\int \mu<\infty$.  The case of singular 
 operators  is more delicate  and the first results   were concerned with conservation laws with memory
 nonlocal in time. 
The adequate notion of entropy solutions  was introduced in \cite{CoGrLo96} by Cockburn, Gripenberg and Londen; see also \cite{JaWi03}.
 It was adapted later in~\cite{Ali07} for fractional diffusions in space with a focus on the equation
\begin{equation}\label{fcl}
\ptl_t u+\nabla \cdot F(u)+(-\triangle)^\frac{\alpha}{2} u=0.
\end{equation}
The pioneering work on  \eqref{fcl}  goes back to Biler, Funaki and Woyczy\'{n}ski \cite{BiFuWo98}. Now the well-posedness 
 is  well-understood: If  $\alpha \geq 1$,  there is a unique smooth solution \cite{BiFuWo98,DrGaVo03,ChCz10,Sil11,CoVi12};  if $\alpha<1$, shocks can
occur~\cite{AlDrVo07,KiNaSh08} and weak solutions can be non-unique \cite{AlAn10}; for any $\alpha \in (0,2)$,
there exists a unique entropy solution corresponding to the classical one when it exists as well \cite{Ali07}. The entropy solution theory
was finally  extended by Karlsen and Ulusoy  \cite{KaUl11} to pure  jump L\'evy operators  and by Cifani and Jakobsen  \cite{CiJa11} to nonlinear diffusions such as in \eqref{cauchy-problem}. 

 As concerning $L^1$ data,
Wei, Duan and Lv recently introduced a kinetic formulation
in \cite{WeiDuanLv} for $L^1 \cap BV$ solutions  of \eqref{fcl} when $\alpha <1$. This situation is very particular since 
$(-\triangle)^{\frac{\alpha}{2}} u
\in L^1$ and this allows to treat the diffusion as a zero order term. In general, we do not know whether $A(u)$ is locally integable for unbounded solutions and non-globally Lipschitz $A$. We thus have difficulties to define $g[A(u)]$ as a distribution. 

To conclude this state-of-the-art, note that the concept of renormalized solutions \cite{BeCaWi00,BeKa05} provides another framework for $L^1$ data. It was extended to nonlocal problems but essentially for elliptic PDEs as far as we know; see e.g. \cite{AlAnBe10,KlimsiakRozkosz}.

\subsubsection*{Main contribution}

The present paper extends the kinetic formulation of Lions, Pethame and Tadmor \cite{LiPeTa91,LiPeTa94} to \eqref{cauchy-problem}; see Definition \ref{defi-kinetic}. The main results are the equivalence with entropy solutions in $L^1 \cap L^\infty$, the well-posedness in $L^1$ and the $L^1$ contraction principle; see Theorems \ref{thm:equivalence} and \ref{thm:wp-kinetic}. The cornerstone of the theory is a new explicit representation of the nonlocal dissipation measure in the spirit of Chen and Perthame \cite{ChPe03}; see  Theorem \ref{key-theo}. 

Technically, the uniqueness proof does not rely on the {\it cutting technique} used in every known proofs on $L^\infty$ entropy solutions. 
This technique was based on first order like arguments which no longer seem to fit our setting. Here we faithfully adapt the more standard and sharper tools of the second order theory and this is our main technical contribution. To give more details, we need to recall some facts on entropy solutions.

\subsubsection*{Various entropy inequalities}

An entropy solution  to \eqref{scl}  is a function $u=u(t,x)$ such that for all $\xi \in \R$,
\begin{equation*}
\partial_t |u-\xi|+\nabla_x \cdot \left\{\sgn (u-\xi) (F(u)-F(\xi))\right\} \leq 0
\end{equation*}
in $\mathcal{D}'((0,\infty) \times \R^d)$, where $\sgn (\cdot):=|\cdot|'$ is the sign function.
 The uniqueness can be achieved   
by using the Kruzhkov device of doubling the variables \cite{Kru70}. 
For second order equations, one needs to take into account some form of parabolic dissipation.   
This can  be achieved in two ways. The first way was developed by Carrillo in \cite{Car99} and consists in recovering 
 such a dissipation 
from the entropy inequalities. The second way was introduced in \cite{ChDi01} by Chen and DiBenedetto and consists in explicitly including a proper form of 
 dissipation  in the entropy inequalities; see also~\cite{KaRi03,ChPe03,BeKa05,ChKa05}. For  
\eqref{dpe}, 
this gives  
\begin{equation}
\label{ei-dpe}
\begin{split}
& \partial_t |u-\xi|+\nabla_x \cdot \left\{\sgn (u-\xi) (F(u)-F(\xi))\right\}\\
& -\triangle_x |A(u)-A(\xi)|
\leq -2 \delta(u-\xi) A'(u) |\nabla u|^2,
\end{split}
\end{equation}
where $\delta(\cdot):=\frac{1}{2} \sgn'(\cdot)$ is the Dirac measure at zero. 
%
In \cite{CiJa11}, Cifani and Jakobsen 
 used the following entropy inequalities for 
\eqref{cauchy-problem}:  
\begin{equation}\label{ei-fdpe}
\begin{split}
& \partial_t |u-\xi|+\nabla_x \cdot \left\{\sgn (u-\xi) (F(u)-F(\xi)) \right\}\\
& - P.V. \int_{|z|\leq r} \left(|A(\tau_z u)-A(\xi)| -|A(u)-A(\xi)|\right) \mu(z) \, dz\\
& - \sgn (u-\xi) \int_{|z|> r} (A(\tau_z u)-A(u)) \mu(z) \, dz \leq 0,
\end{split}
\end{equation}
for all $\xi \in \R$ and all $r>0$, where $\tau_z u$ designs the function $(t,x) \mt u(t,x+z)$. 
 The idea is to treat the integral in $|z| > r$ as a zero order term and neglect the other integral as~$r \da 0$; cf. also \cite{CoGrLo96,JaWi03,Ali07};  

 A natural question is whether it is possible to reformulate \eqref{ei-fdpe} in the spirit of \eqref{ei-dpe}. A first try was attempted by Karlsen and Ulusoy in \cite{KaUl11}  
but their uniqueness proof reduces to recover \eqref{ei-fdpe}.  
 Here we reformulate \eqref{ei-fdpe} with a new proper form of nonlocal dissipation: 
\begin{equation}\label{nei-fdpe}
\begin{split}
& \partial_t |u-\xi|+\nabla_x \cdot \left\{\sgn (u-\xi) (F(u)-F(\xi)) \right\}\\
& +g_x[|A(u)-A(\xi)|] \leq -\underbrace{\int_{\R^d} |A(\tau_z u)-A(\xi)| \Char_{\conv \{u,\tau_z u\}}(\xi) \mu(z) \, dz}_{=:n},
\end{split}
\end{equation}
where $\Char_{\conv \{u,\tau_z u\}}(\cdot)$ is the characteristic function of the real  interval of extremities~$u$ and $\tau_z u$.
 This will be the cornerstone of the kinetic theory. 

\subsubsection*{A successfull kinetic formulation}

Following \cite{LiPeTa91,LiPeTa94,ChPe03,Per09}, we obtain the kinetic equation in $\chi(\xi;u):=\partial_\xi \left(|\xi|-|\xi-u|\right)/2$ by derivating  \eqref{nei-fdpe} in $\xi$: 
\begin{equation}
\label{kf}
\partial_t \chi(\xi;u)+F'(\xi) \cdot \nabla_x \chi(\xi;u)+A'(\xi) g_x[\chi(\xi;u)]=\partial_\xi (m+n)
\end{equation}
in $\mathcal{D}'((0,\infty) \times \R^{d+1})$ where  
 $m \geq 0$ is unknown and $n \geq 0$  is 
 defined as in~\eqref{nei-fdpe}.
The advantage of \eqref{kf} when $u$ is merely $L^1$ is that it makes sense even if $F(u)$ and $A(u)$ are not $L^1_{\rm loc}$. 
It seems better to consider the kinetic equation from \eqref{nei-fdpe}, because derivating \eqref{ei-fdpe} would give the term
$
\delta(u-\xi) \int_{|z|> r} (A(\tau_z u)-A(\xi)) \mu(z) \, dz
$
which does not obviously make sense. Small and large jumps $z$ are thus treated in the same way in our uniqueness proof thanks roughly speaking to the explicit dissipation in \eqref{nei-fdpe} and Lemma~\ref{Lemma-F(a,b,c,d)-leq_G(a,b,c,d)}.

\subsubsection*{Outline of the paper} 
Our main 
results are stated in Section \ref{sec:def-results} and proved 
in Sections~\ref{sec:prelim}--\ref{sec:uniq}. The uniqueness 
 is first proved formally and then rigorously in the spirit of \cite{ChPe03,Per09}. 
It is the core of the paper. The existence could be established without relying on entropy solutions, making the kinetic theory self-sufficient as in \cite{ChPe03,Per09}, but here we use a known existence result for entropy solutions to be brief.

\subsubsection*{Reminders of our notation} The symbol $\nabla$ is used for the gradient in $x$  and $\nabla^2$ for the Hessian. The symbol $\delta(\xi)$ designs the Dirac mass at $\xi=0$.  In integrals, we use  the notation $\delta(\xi) \, d\xi$ for $d\delta(\xi)$ or $\delta(d\xi)$. We  do the same for  the other measures $\mu(z)$, $m(t,x,\xi)$, etc. 
 Note that $\mu$  in \eqref{condition-on-mu} is $\sigma$-finite  and Fubini's theorem applies  to define its product with  $dx$, etc., which we denote by $\mu(z) \, dx \, dz$, etc. 
Further notation is introduced in Section~\ref{sec:prelim}. 

\section{Entropy and kinetic solutions: Definitions and main results}\label{sec:def-results}

 Let us now give the rigorous definition of entropy and kinetic solutions and state our main results.
In order to avoid unnecessary technical issues, we only use $C^2$ entropies $u \mt S(u)$. 

Let us recall the notion of entropy solutions  to  \eqref{cauchy-problem} from \cite{CiJa11}, see also~\cite{CoGrLo96,JaWi03,Ali07,KaUl11}.
 Given any convex $C^2$ function $S:\R \ra \R$,  we consider 
$$
\eta:\R \ra \R^d \quad \mbox{and} \quad \beta:\R \ra \R,
$$satisfying
$$
\eta'=S' F' \quad \mbox{and} \quad \beta'=S' A'.
$$Throughout such a triplet $(S,\eta,\beta)$ is refered to as an {\em entropy-entropy flux triple.}

\begin{defi}[Entropy solutions]\label{def:entropy}
Let $u_0 \in L^1 \cap L^\infty(\R^d)$ and \eqref{condition-on-F}--\eqref{condition-symmetry} hold. A function $u\in L^\infty (\R^+;L^1(\R^d)) \cap L^\infty(\R^+ \times \R^d)$
is an entropy solution of~\eqref{cauchy-problem} provided that for all entropy-entropy flux triple $(S,\eta,\beta)$, all $r>0$, and all nonnegative test function $\varphi \in \mathcal{D}(\R^{d+1})$,
\begin{equation}\label{rei-fdpe}
\begin{split}
& \int_{0}^{\infty} \int_{\R^d} \left(S(u)\partial_t\varphi+\eta(u)\cdot \nabla \varphi\right) dt \, dx \\
& +\int_{0}^{\infty} \int_{\R^d} \int_{|z| > r}  S'(u(t,x)) \left(A(u(t,x+z))-A(u(t,x))\right) \varphi(t,x) \mu(z)  \, dt \, dx \, dz \\
& +P.V. \int_{0}^{\infty} \int_{\R^d} \int_{|z| \leq r} \beta(u(t,x)) \left(\varphi(t,x+z)-\varphi(t,x)\right) \mu(z) \, dt \, dx \, dz\\
& +\int_{\R^d}
S(u_0(x)) \varphi(0,x) \, dx\geq 0.
\end{split}
\end{equation}
\end{defi}

\begin{remark}
\begin{enumerate}
\item The original definition of \cite{CiJa11} was actually given with the entropies of Kruzhkov. The definition above is an equivalent reformulation already used for instance in \cite[Section 6]{KaUl11} or \cite[Section 7]{AlCiJapr}.
\item The principal value makes sense by \eqref{condition-on-mu}--\eqref{condition-symmetry} (see \eqref{def-PV} and \eqref{Levy-form}).
\item Here it may look that we are integrating a Lebesgue measurable function $u$ with respect to the Borel measure $\mu$. An easy way to avoid such measurability issues consists in only considering Borel representative of $u$.
\item As usually, classical solutions are entropy solutions and entropy solutions are weak (distributional) solutions; see \cite{CiJa11} for more details.
\end{enumerate}
\end{remark}

Here is the well-posedness result from~\cite{CiJa11}.
\begin{theo}[Well-posedness of entropy solutions]\label{thm:wp-entropy}
Let~$u_0 \in L^1\cap L^\infty(\R^d)$ and let us assume \eqref{condition-on-F}--\eqref{condition-symmetry}.
Then there
exists a unique entropy solution $u$ of \eqref{cauchy-problem}. This solution belongs to $C([0,\infty);L^1(\R^d)) \cap L^\infty(\R^+ \times \R^d)$ with $u(0,\cdot)=u_0(\cdot)$. Moreover, we have $\essinf u_0 \leq u \leq \esssup u_0$,
\begin{equation*}
\|u(t,\cdot)\|_{L^1(\R^d)}\leq\|u_0\|_{L^1(\R^d)} \quad \mbox{for all $t \geq 0$,}
\end{equation*}
and if $\tilde u$ is the solution of~\eqref{cauchy-problem}
associated to $\tilde u(0,\cdot)=\tilde u_0(\cdot) \in L^1 \cap L^\infty(\R^d)$, then
\begin{equation*}
\|u(t,\cdot)-\tilde u(t,\cdot)\|_{L^1(\R^d)} \leq \|u_0-\tilde u_0\|_{L^1(\R^d)}  \quad \mbox{for all $t \geq 0$.}
\end{equation*}
\end{theo}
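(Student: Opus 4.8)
The plan is to establish separately the three components of the statement---the $L^1$ contraction (which contains uniqueness), the existence, and the qualitative bounds---following the now classical route for degenerate parabolic and fractional conservation laws, i.e., the approach of \cite{CiJa11}, itself in the lineage of \cite{CoGrLo96,Ali07}. The heart of the matter is the $L^1$ contraction, and I would obtain it through Kruzhkov's doubling of variables adapted to the nonlocal operator $g$. Given two entropy solutions $u$ and $\tilde u$, I would write the entropy inequality \eqref{rei-fdpe} for $u$ in the variables $(t,x)$, using Kruzhkov-type entropies (approximated by convex $C^2$ functions so as to stay within Definition~\ref{def:entropy}) with the constant $\xi$ frozen at the value $\tilde u(s,y)$, and symmetrically write \eqref{rei-fdpe} for $\tilde u$ in $(s,y)$ with $\xi$ frozen at $u(t,x)$. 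Testing both against a common test function $\varphi(t,x,s,y)$ concentrating on the diagonal $\{t=s,\,x=y\}$, summing, and passing to the diagonal limit is the standard mechanism; the convective terms are handled exactly as in the first order theory.

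\emph{Treatment of the nonlocal term and the Kato inequality.} The delicate contribution is the nonlocal one, and here I would follow the \emph{cutting technique}: split $g$ into its large-jump part ($|z|>r$), which is a bounded zero-order operator and passes to the limit by dominated convergence, and its small-jump principal-value part ($|z|\leq r$), which must be controlled using the symmetry \eqref{condition-symmetry}, the monotonicity of $A$, and convexity so that it yields a favorable (nonpositive) contribution vanishing as $r\da 0$. Combining the limits, I expect to arrive at the Kato-type inequality
\[
\ptl_t |u-\tilde u| + \nabla_x \cdot \{\sgn(u-\tilde u)(F(u)-F(\tilde u))\} + g_x[|A(u)-A(\tilde u)|] \leq 0
\]
in $\mathcal{D}'((0,\infty)\times\R^d)$. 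Integrating over $x\in\R^d$, the divergence term vanishes and, crucially, so does the nonlocal term: since $g$ annihilates constants and is self-adjoint by \eqref{condition-symmetry}, one has $\int_{\R^d} g_x[|A(u)-A(\tilde u)|]\,dx=0$ (note $|A(u)-A(\tilde u)|\le C|u-\tilde u|\in L^1$ as $A$ is locally Lipschitz and $u,\tilde u\in L^1\cap L^\infty$). This gives $\tfrac{d}{dt}\|u(t)-\tilde u(t)\|_{L^1(\R^d)}\le 0$, hence the $L^1$ contraction and, by taking $\tilde u_0=u_0$, uniqueness. Taking $\tilde u\equiv 0$---which solves \eqref{cauchy-problem} because $\nabla\cdot F(0)=0$ and $g[A(0)]=0$---yields the $L^1$ bound.

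\emph{Existence and qualitative bounds.} For existence I would regularize: truncate the singular part of $\mu$ near the origin into a bounded kernel $\mu_\eps$ (and, if needed, add a vanishing viscosity and mollify $u_0$, $F$, $A$). The regularized problems admit smooth solutions by standard parabolic theory and satisfy $\essinf u_0 \le u_\eps \le \esssup u_0$, since constants solve the equation and comparison holds. Uniform $L^1$, $L^\infty$, and time-translation estimates then provide compactness in $C([0,\infty);L^1_{\rm loc}(\R^d))$; passing to the limit in the regularized entropy inequalities produces an entropy solution with the stated bounds, the limit being identified by the already established uniqueness. Continuity in time, $u\in C([0,\infty);L^1(\R^d))$ with $u(0,\cdot)=u_0$, follows from the $L^1$ contraction applied to time translates together with the uniform estimates.

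\emph{Main obstacle.} The principal difficulty is precisely the treatment of the singular nonlocal operator inside the doubling of variables: the principal-value integral over $|z|\leq r$ is \emph{not} a zero-order term, and must be estimated so as to produce a controllable, ultimately negligible, dissipative contribution as $r\da 0$. This is where the evenness \eqref{condition-symmetry} of $\mu$ and the monotonicity of $A$ are essential, and it is exactly the step that the present paper replaces, in its own kinetic uniqueness proof, by the explicit dissipation measure of \eqref{nei-fdpe}.
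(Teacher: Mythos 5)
Your proposal is correct, and it coincides with the proof the paper relies on: the paper does not prove this theorem itself but imports it verbatim from \cite{CiJa11} (``Here is the well-posedness result from~\cite{CiJa11}''), and the proof in that reference is exactly what you outline --- Kruzhkov doubling of variables combined with the cutting technique (the $|z|>r$ part treated as a bounded zero-order term, the principal value over $|z|\leq r$ controlled via the symmetry of $\mu$ and the monotonicity of $A$ and made to vanish as $r\da 0$) for the $L^1$ contraction, together with regularization and compactness for existence. You also correctly identify that this cutting-technique argument is precisely what the present paper replaces, for its own kinetic uniqueness proof in the pure $L^1$ setting, by the explicit nonlocal dissipation measure of \eqref{nei-fdpe}.
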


 To  motivate our kinetic formulation, we need to reformulate the entropy inequalities with a proper form of nonlocal dissipation in the spirit of \cite{ChDi01}, see also \cite{KaRi03,ChPe03,BeKa05,ChKa05,KaUl11}. The key point is the following elementary Taylor's identity: For all $a,b \in \R$,
\begin{equation}\label{key-taylor}
S'(a) (A(b)-A(a))=\beta(b)-\beta(a)- \int_{\R} S''(\xi) |A(b)-A(\xi)| \Char_{\conv \{a,b\}}(\xi) \, d\xi,
\end{equation}
where throughout the paper, $\conv \{a,b\}$ stands for the interval
\begin{equation*}
\conv \{a,b\}:= \left(\min \{a,b\},\max \{a,b\}\right),
\end{equation*}
and $\Char_{\conv \{a,b\}}(\cdot)$ denotes its characteristic function normalized (everywhere defined) by
\begin{equation}
\label{def-char}
\Char_{\conv \{a,b\}}(\xi):=
\begin{cases}
1 & \mbox{if $\min \{a,b\} < \xi < \max \{a,b\}$},\\
\frac{1}{2} & \mbox{if $\xi =a$ or $b$,}\\
0 & \mbox{otherwise.}
\end{cases}
\end{equation}

\begin{remark}
In \eqref{def-char}, the choice of the value $\frac{1}{2}$ at the endpoints of the interval $\conv\{a,b\}$
 is dictated by the regularization procedure exploited in the uniqueness proof, since nonlinearities $S$ with singular second derivative are used.
 In this section, this technical detail can be neglected since $S$ is assumed to have the $C^2$ regularity.
\end{remark}

\begin{remark}
In the sequel, we will use other characteristic functions defined as usually. To avoid confusion, they will be denoted by $\mathbf{1}$ (and not $\Char)$. For instance, if $E \subseteq \R$, then
\begin{equation*}
\mathbf{1}_{E}(\xi):=
\begin{cases}
1 & \mbox{if } \xi \in E,\\
0 & \mbox{if not.}
\end{cases}
\end{equation*}
\end{remark}

The result below is a simple rewritting of the entropy inequality \eqref{rei-fdpe} based on the identity \eqref{key-taylor} and a passage to the limit as $r \da 0$.

\begin{theo}[Explicit representation of the nonlocal dissipation]\label{key-theo}
Assume \eqref{condition-on-F}--\eqref{condition-symmetry} and let $u_0 \in L^1 \cap L^\infty(\R^d)$. A function $u\in L^\infty \left(\R^+;L^1(\R^d)\right) \cap L^\infty(\R^+ \times \R^d)$
is an entropy solution of~\eqref{cauchy-problem} if and only if for all entropy-entropy flux triple $(S,\eta,\beta)$ and all nonnegative $\varphi \in \mathcal{D}(\R^{d+1})$,
\begin{equation}
\label{rnei-fdpe}
\begin{split}
& \int_{0}^{\infty} \int_{\R^d} \left(S(u) \partial_t\varphi+\eta(u) \cdot \nabla \varphi -\beta(u) g[\varphi]\right) dt \, dx \\
& +\int_{\R^d}
S(u_0(x))\varphi(0,x) \, dx \geq \int_{0}^{\infty} \int_{\R^{d+1}} S''(\xi) n(t,x,\xi) \varphi(t,x) \, dt \, dx \, d\xi,
\end{split}
\end{equation}
where the function $n:\R^+ \times \R^{d+1} \ra [0,\infty]$ is defined by
\begin{equation}
\label{def-nonloc-dissip}
n(t,x,\xi) := \int_{\R^d} |A(u(t,x+z))-A(\xi)| \Char_{\conv \{u(t,x),u(t,x+z)\}}(\xi) \mu(z) \, dz.
\end{equation}
\end{theo}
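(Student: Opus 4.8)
The plan is to transform the entropy inequality \eqref{rei-fdpe}, valid for every $r>0$, into \eqref{rnei-fdpe} by treating separately the ``large jump'' and ``small jump'' diffusion contributions and by invoking the pointwise Taylor identity \eqref{key-taylor}. I would fix an entropy--entropy flux triple $(S,\eta,\beta)$ and a nonnegative $\varphi\in\mathcal{D}(\R^{d+1})$, and denote by $T_1,\dots,T_4$ the four summands of \eqref{rei-fdpe} in order. Since $u$ is bounded, $S'$ and $S''$ are bounded on its range and each $T_i$ is finite: the two $z$-integrals converge because $\mu(\{|z|>r\})<\infty$ by \eqref{condition-on-mu}, and because of the principal value in the $|z|\le r$ part. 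The first move is to apply \eqref{key-taylor} with $a=u(t,x)$ and $b=\tau_z u(t,x)=u(t,x+z)$ inside $T_2$, splitting it as $T_2=T_2'+T_2''$, where $T_2'$ carries the $\beta(\tau_z u)-\beta(u)$ contribution and $T_2''$ the $\xi$-integral against $S''$.

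In $T_2'$ the piece involving $\beta(\tau_z u)\,\varphi$ is handled by the change of variable $x\mapsto x-z$ followed by the reflection $z\mapsto-z$, which is licit thanks to the evenness \eqref{condition-symmetry} and to Fubini (justified by $\mu(\{|z|>r\})<\infty$); this recasts $T_2'$ as $\int_0^\infty\!\int_{\R^d}\beta(u)\int_{|z|>r}(\varphi(\cdot+z)-\varphi)\,\mu\,dz\,dx\,dt$. I would then add $T_3$, writing its principal value in symmetrized form $\tfrac12\int_{|z|\le r}(\varphi(\cdot+z)+\varphi(\cdot-z)-2\varphi)\,\mu$, which is absolutely convergent by the second-order bound $|\varphi(x+z)+\varphi(x-z)-2\varphi(x)|\le C|z|^2$ together with $\int_{|z|\le r}|z|^2\,\mu<\infty$. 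Symmetrizing likewise the $|z|>r$ integral from $T_2'$, the two merge into $\tfrac12\int_{\R^d}\beta(u)\int_{\R^d}(\varphi(\cdot+z)+\varphi(\cdot-z)-2\varphi)\,\mu\,dz=-\int\!\int\beta(u)\,g[\varphi]$, which is exactly the diffusion term of \eqref{rnei-fdpe} and, crucially, no longer depends on $r$. Here I would also record that $g[\varphi]\in L^1(\R^d)$, so the term is finite: it is bounded near $\mathrm{supp}\,\varphi$ (by \eqref{Levy-form}) while its far field is controlled by $\|\varphi\|_{L^1}\,\mu(\{|z|\ge R\})$.

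It remains to analyze $T_2''=-\int_0^\infty\!\int_{\R^d}\int_{|z|>r}\bigl(\int_\R S''(\xi)\,|A(\tau_z u)-A(\xi)|\,\Char_{\conv\{u,\tau_z u\}}(\xi)\,d\xi\bigr)\varphi\,\mu\,dz\,dx\,dt$. As the integrand is nonnegative ($S$ convex), Tonelli permits reordering and yields $T_2''=-\int_0^\infty\!\int_{\R^{d+1}}S''(\xi)\,n_r(t,x,\xi)\,\varphi\,dt\,dx\,d\xi$, where $n_r$ is $n$ of \eqref{def-nonloc-dissip} with the $z$-integral restricted to $|z|>r$. Collecting the three identities $T_2=T_2'+T_2''$, $T_2'+T_3=-\int\!\int\beta(u)g[\varphi]$ and $T_2''=-\int S''n_r\varphi$ (all exact equalities), the inequality \eqref{rei-fdpe} for a fixed $r$ rewrites \emph{exactly} as $L\ge\int S''n_r\varphi$, where $L$ is the $r$-independent left-hand side of \eqref{rnei-fdpe}. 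Since $n_r\ua n$ pointwise as $r\da0$, monotone convergence gives $\int S''n_r\varphi\ua\int S''n\varphi$; hence ``\eqref{rei-fdpe} for all $r>0$'' is equivalent to $L\ge\sup_{r>0}\int S''n_r\varphi=\int S''n\varphi$, that is, to \eqref{rnei-fdpe}. This establishes both implications simultaneously.

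The routine ingredients (the Taylor splitting, Fubini/Tonelli, and monotone convergence) are harmless. The one genuinely delicate step is the identification $T_2'+T_3=-\int\!\int\beta(u)\,g[\varphi]$: there the singularity of $\mu$ at the origin must be absorbed by the second-order cancellation of $\varphi$, and one has to check that the principal value, the symmetrization, and the spatial integration may indeed be interchanged, and that the resulting term is integrable. I expect this to be the main obstacle; the rest is bookkeeping.
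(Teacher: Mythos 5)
Your proposal is correct and follows essentially the same route as the paper's proof: the Taylor identity \eqref{key-taylor} to split the $|z|>r$ term, the change of variables (evenness of $\mu$) to move the increments onto $\varphi$ so that the $|z|>r$ and $|z|\le r$ parts recombine into $-\int\!\!\int \beta(u)\,g[\varphi]$, Tonelli for the dissipation term, and monotone convergence as $r\da 0$, with the converse obtained by discarding the nonnegative truncated dissipation $n-n_r$. Your only (cosmetic) improvement is organizational: by recording that \eqref{rei-fdpe} at fixed $r$ is \emph{exactly} equivalent to $L\ge\int S''\,n_r\,\varphi$ with an $r$-independent left-hand side, you obtain both implications simultaneously via $\sup_{r>0}\int S''\,n_r\,\varphi=\int S''\,n\,\varphi$, whereas the paper treats the two directions separately.
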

The proof of Theorem~\ref{key-theo} is deferred to   Section~\ref{sec:equiv}.  

\begin{remark}\label{absolutely-continuous}
Throughout $n$ is refered to as the {\em nonlocal dissipation measure.} This is a nonlocal version of the parabolic dissipation measure $2\delta(u-\xi) A'(u) |\nabla u|^2$ obtained for the degenerate parabolic equation \eqref{dpe}. Here, we get a measure absolutely continuous with respect to the Lebesgue one, since
$n \in L^1(\R^+ \times \R^{d+1})$ as a consequence of Inequality \eqref{rnei-fdpe} with $S'' \equiv 1$.
\end{remark}

We are now ready to define the notion of kinetic solutions. We use the framework of \cite{LiPeTa91,LiPeTa94,Per09} and especially the insight of \cite{ChPe03} by including the identification of the dissipation measure in the formulation. We consider the kinetic function $\chi:\R^2 \to \{-1,0,1\}$ defined by
\begin{equation}\label{def-chi}
\chi(\xi;u):=
\begin{cases}
1 & \mbox{if $0<\xi<u$},\\
-1 & \mbox{if $u<\xi<0$},\\
0 & \mbox{otherwise.}
\end{cases}
\end{equation}
Note that $u \in L^\infty(\R^+;L^1(\R^d))$ if and only if $\chi(\xi;u(t,x)) \in L^\infty(\R^+;L^1(\R^{d+1}))$. Note also that one has the following simple representation:
\begin{equation}\label{fundamental-theo}
S(u)-S(0)=\int_{\R} S'(\xi) \chi(\xi;u) \, d\xi.
\end{equation}
These observations lead us to the definition below where Inequality \eqref{rnei-fdpe} is roughly speaking rewritten with the $\chi$ function. Throughout the paper,
$$
\text{$M^1$ stands for the space of bounded Borel measures}
$$and
$$\text{$L^\infty_0$ stands for the space of 
a.e. bounded functions vanishing at infinity.}
$$
\begin{defi}[Kinetic solutions]\label{defi-kinetic}
Let $u_0 \in L^1(\R^d)$ and \eqref{condition-on-F}--\eqref{condition-symmetry} hold. A function $u\in L^\infty (\R^+;L^1(\R^d))$
is a kinetic solution of~\eqref{cauchy-problem} provided that there exists a nonnegative measure $m \in M^1_{\rm loc}([0,\infty) \times \R^{d+1})$ such that for almost all $\xi \in \R$,
\begin{equation}\label{cond-infinity}
\int_0^\infty \int_{\R^{d}} (m+n)(t,x,\xi) \, dt \, dx \leq \nu(\xi),
\end{equation}
for some $\nu \in L^\infty_0(\R_\xi)$, and for all~$\varphi \in \mathcal{D}(\R^{d+2})$,
\begin{equation}\label{rkf}
\begin{split}
& \int_{0}^{\infty} \int_{\R^{d+1}}\chi(\xi;u) \left(\partial_t\varphi+F'(\xi) \cdot \nabla_x \varphi-A'(\xi) g_x[\varphi]\right) dt \, dx \, d\xi \\
& +\int_{\R^{d+1}}
\chi(\xi;u_0(x))\varphi(0,x,\xi) \, dx \, d\xi = \int_{0}^{\infty} \int_{\R^{d+1}} (m+n)(t,x,\xi) \partial_\xi \varphi(t,x,\xi) \, dt \, dx \, d\xi,
\end{split}
\end{equation}
where $n$ is the nonnegative function defined in~\eqref{def-nonloc-dissip}.
\end{defi}

 Note that \eqref{cond-infinity} has to be understood in the distribution sense, that is to say
\begin{equation}\label{nu-weak-sense}
\int_0^\infty \int_{\R^{d+1}} (m+n)(t,x,\xi) \varphi(\xi) \, dt \, dx \, d\xi \leq \int_{\R} \nu(\xi) \varphi(\xi) \, d\xi,
\end{equation}
for any nonnegative $\varphi \in \mathcal{D}(\R_\xi)$, where hereafter, we {sometimes} write $\R_\xi$ to highlight dependence of functions and measures on the kinetic variable $\xi$. 

\begin{remark}
 The measure  $m$ is referred to as the {\em entropy defect measure.} It is a priori unknown but a posteriori uniquenely determined (see Theorem~\ref{thm:wp-kinetic}); in other words, the couple $(u,m)$ is the unknown of the kinetic formulation \eqref{rkf}.
\end{remark}

The next remark enumerates standard properties of kinetic solutions which remain valid in the nonlocal setting. Most of them will not be needed, so we refer to the arguments of \cite{LiPeTa91,LiPeTa94,ChPe03,Per09} for proofs.

\begin{remark}
\begin{enumerate}
\item If { $u_0 \in L^1 \cap L^\infty(\R^d)$,}
$$
{\rm supp} (m+n) \subseteq \left\{\essinf u_0\leq \xi \leq \esssup u_0\right\}.
$$
\item If { $u_0 \in L^1 \cap L^2(\R^d)$,}
\begin{align*}
\int_0^\infty \int_{\R^{d+1}} (m+n)(t,x,\xi) \, dt \, dx \, d\xi \leq \frac{1}{2} \|u_0\|_{L^2(\R^d)}^2.
\end{align*}
\item If $u_0$ is merely integrable, then for almost all $\xi \in \R$,
$$
\int_0^\infty \int_{\R^{d}} (m+n)(t,x,\xi) \, dt \, dx \leq \nu(\xi),
$$where $\nu(\xi):=\|(u_0-\xi)^+ \mathbf{1}_{\xi > 0}\|_{L^1(\R^d)}+\|(u_0-\xi)^- \mathbf{1}_{\xi < 0}\|_{L^1(\R^d)} \in L^\infty_0(\R_\xi)$.
\item If $S_A(u_0) \in L^1(\R^d)$ where
$
S_A(\xi):=\int_0^\xi \left(A(\zeta)-A(0)\right) d\zeta,
$
\begin{align*}
\int_0^\infty \int_{\R^{2d}} \left(A(u(t,x+z))-A(u(t,x))\right)^2 \mu(z) \, dt \, dx \, dz \leq \|S_A(u_0)\|_{L^1(\R^d)}.
\end{align*}
This should be compared to the usual $H^1$ estimate for degenerate parabolic equations \cite{Car99}, since for $g=(-\triangle)^{\frac{\alpha}{2}}$ the above estimate reads 
\begin{align*}
|A(u)|_{L^2 (\R^+;H^\frac{\alpha}{2}(\R^d))} \leq \sqrt{\|S_A(u_0)\|_{L^1(\R^d)}}.
\end{align*}
\end{enumerate}
\end{remark}

 Here is another standard property of convection-diffusion conservation laws that  remains valid for the nonlocal case, and which we  often  use throughout. It says that we can reformulate  Definition \ref{defi-kinetic}
  by expressing the initial data in the classical sense.

\begin{proposition}
\label{continuity-in-time}
 Assume that \eqref{condition-on-F}--\eqref{condition-symmetry} hold and that $u_0 \in L^1(\R^d)$. Then $u \in L^\infty(\R^+;L^1(\R^d))$ is a kinetic solution of \eqref{cauchy-problem} if and only if there exsits a nonnegative measure $m \in M_{\rm loc}^1([0,\infty) \times \R^{d+1})$ such that \eqref{cond-infinity} holds together with the following conditions:
\begin{equation}\label{kds}
\begin{cases}
\partial_t \chi(\xi;u)+F'(\xi) \cdot \nabla_x \chi(\xi;u)+A'(\xi) g_x[\chi(\xi;u)]=\partial_\xi (m+n),\\[1ex]
\lim_{t \da 0} \|u(t,\cdot)-u_0(\cdot)\|_{L^1(\R^d)}=0,\\[1ex]
\text{and } \lim_{t \da 0} \int_0^t \int_{\R^d} \int_{-R}^R (m+n)(s,x,\xi) \, ds \, dx \, d\xi=0, \quad \forall R>0,
\end{cases}
\end{equation}
where the equation holds in $\mathcal{D}'((0,\infty) \times \R^d \times \R_\xi)$ and the limits are taken in the essential sense.
\end{proposition}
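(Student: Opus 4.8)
The statement is the standard passage, for \eqref{cauchy-problem}, between the ``all-in-one'' kinetic formulation of Definition~\ref{defi-kinetic} and the split formulation \eqref{kds} (interior kinetic equation $+$ strong $L^1$ initial trace $+$ vanishing of the dissipation near $t=0$); the plan is to adapt the initial-trace arguments of \cite{ChPe03,Per09} to the present nonlocal setting. The only genuinely nonlocal ingredient, the term $A'(\xi)g_x[\cdot]$, is harmless: since $\chi(\xi;u)$ is bounded by $1$ and lies in $L^\infty(\R^+;L^1(\R^{d+1}))$, while $g_x[\varphi]$ is a bounded function with the decay granted by \eqref{condition-on-mu}, this term is estimated exactly like the flux term, and the self-adjointness \eqref{condition-symmetry} lets us move $g_x$ freely between $\chi$ and the test function. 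Two of the three conditions in \eqref{kds} are then immediate. Restricting \eqref{rkf} to test functions supported in $\{t>0\}$ yields the interior kinetic equation. The third condition is as well almost free: by \eqref{cond-infinity} the measure $(m+n)\,\mathbf{1}_{|\xi|<R}$ is finite on $(0,\infty)\times\R^d\times(-R,R)$ (recall also that $n\in L^1$ by Remark~\ref{absolutely-continuous}), so continuity from above of a finite measure forces $\int_{(0,t)}\int_{\R^d}\int_{-R}^R(m+n)\to0$ as $t\da0$.

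The substance of the ``$\Rightarrow$'' implication is the strong $L^1$ continuity at $t=0$, which I would obtain in three steps. First, existence of a weak trace: for fixed $\psi\in\mathcal{D}(\R^{d+1})$ set $g_\psi(t):=\int_{\R^{d+1}}\chi(\xi;u(t,x))\psi(x,\xi)\,dx\,d\xi$; the interior equation shows that $g_\psi'$ is, on $(0,\infty)$, the sum of a term bounded uniformly in $t$ (from $\chi\in L^\infty(\R^+;L^1)$ tested against the smooth compactly supported $F'\cdot\nabla_x\psi - A'g_x[\psi]$) and the finite measure $-\int(m+n)\partial_\xi\psi$. Hence $g_\psi\in BV_{\rm loc}$ and has an essential right-limit at $0$. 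Inserting $\varphi(t,x,\xi)=\theta_\epsilon(t)\psi(x,\xi)$ in \eqref{rkf}, with $\theta_\epsilon$ decreasing from $1$ at $t=0$ to $0$ at $t=\epsilon$, and letting $\epsilon\da0$ identifies this limit as $\int\chi(\xi;u_0)\psi$ (the dissipation contribution drops out by the vanishing just established). Thus $\chi(\cdot;u(t))\rightharpoonup\chi(\cdot;u_0)$, and since $\|\chi(\cdot;u(t))\|_{L^2(\R^{d+1})}^2=\|u(t)\|_{L^1(\R^d)}$ is uniformly bounded, this is in fact weak $L^2$ convergence.

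Second, convergence of masses. Testing \eqref{rkf} with $\varphi=\theta(t)\phi_k(x)\beta(\xi)$, where $\phi_k\ua1$ and $\beta$ is a smooth approximation of $\sgn$ with $\beta'\geq0$ (so that $\beta'\approx2\delta(\xi)$), the right-hand side $\int(m+n)\theta\phi_k\beta'$ is nonnegative because $m+n\geq0$; the flux and nonlocal terms vanish as $\phi_k\ua1$ (for the nonlocal one using \eqref{condition-symmetry} and $g[\mathrm{const}]=0$), while $\int\chi\beta\,d\xi\to|u|$. This yields that $t\mapsto\|u(t)\|_{L^1}$ is essentially non-increasing and, keeping the initial datum via $\theta=\theta_\epsilon$, that $\limsup_{t\da0}\|u(t)\|_{L^1}\leq\|u_0\|_{L^1}$; together with the weak lower semicontinuity from the first step this gives $\|u(t)\|_{L^1}\to\|u_0\|_{L^1}$. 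Third, the rigidity of the kinetic function: from $|\chi(\xi;v)|=\chi(\xi;v)^2$ and $\int_\R|\chi(\xi;a)-\chi(\xi;b)|\,d\xi=|a-b|$ one gets
\[
\|u(t)-u_0\|_{L^1}=\|\chi(\cdot;u(t))-\chi(\cdot;u_0)\|_{L^1}\leq\|\chi(\cdot;u(t))-\chi(\cdot;u_0)\|_{L^2}^2=\|u(t)\|_{L^1}+\|u_0\|_{L^1}-2\!\int\!\chi(\cdot;u(t))\,\chi(\cdot;u_0),
\]
whose right-hand side tends to $0$ by the first two steps; the first inequality uses that the integrand takes values in $\{0,1,2\}$, so $|\cdot|\le|\cdot|^2$.

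For the converse ``$\Leftarrow$'', I would recover \eqref{rkf} from \eqref{kds} by a symmetric cut-off argument: given $\varphi\in\mathcal{D}(\R^{d+2})$, test the interior equation against $\lambda_\epsilon(t)\varphi$, where $\lambda_\epsilon$ vanishes near $t=0$ and equals $1$ for $t\geq\epsilon$, and let $\epsilon\da0$. The extra term $\int\chi\,\lambda_\epsilon'(t)\varphi$ is a temporal average of $\chi(\cdot;u(t))$ near $t=0^+$, which converges to $\int\chi(\xi;u_0)\varphi(0,\cdot)$ precisely by the strong $L^1$ continuity, while the dissipation term converges to its integral over the open set by the vanishing condition; this reproduces \eqref{rkf}. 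The main obstacle is the strong continuity of the first implication, i.e.\ upgrading the weak-$*$ convergence of the kinetic function to strong $L^1$ convergence of $u$; this hinges on the mass convergence $\|u(t)\|_{L^1}\to\|u_0\|_{L^1}$, which is the delicate point and is the reason the Kruzhkov-level-zero ($|\cdot|$-entropy) computation, together with the nonnegativity of $m+n$, has to be carried out with care --- including the truncations needed because $F(u)$ and $A(u)$ need not be integrable for $L^1$ data.
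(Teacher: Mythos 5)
Your architecture for the hard implication (weak $L^2$ trace via a BV-in-time argument, convergence of the masses via the Kruzhkov entropy, then the Hilbert-space trick through the isometry $\|\chi(\cdot;v)\|_{L^2(\R^{d+1})}^2=\|v\|_{L^1(\R^d)}$) is genuinely different from the paper's proof and is, in its main lines, viable. But there is one genuine gap, and it sits exactly where you declare things ``almost free'': the possible mass of $m$ on the initial slice $\{t=0\}$. Definition \ref{defi-kinetic} only asks $m\in M^1_{\rm loc}([0,\infty)\times\R^{d+1})$, and the test functions in \eqref{rkf} do \emph{not} vanish at $t=0$, so nothing excludes a part $\sigma(x,\xi)\otimes\delta_{t=0}$ of $m$ with $\sigma\geq 0$, $\sigma\neq 0$. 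Continuity from above only gives $\lim_{t\da 0}(m+n)\bigl((0,t)\times\R^d\times(-R,R)\bigr)=0$; it says nothing about the closed sets $[0,t]\times\R^d\times(-R,R)$, whose measures decrease to $\sigma(\R^d\times(-R,R))$. This matters in your Step 1: testing \eqref{rkf} with $\theta_\eps(t)\psi(x,\xi)$, the dissipation term is an integral over $[0,\eps]$, atom included, so what you actually obtain is
\begin{equation*}
\esslim_{t \da 0}\int_{\R^{d+1}}\chi(\xi;u(t,x))\,\psi(x,\xi)\, dx\, d\xi
=\int_{\R^{d+1}}\chi(\xi;u_0(x))\,\psi(x,\xi)\, dx \, d\xi-\int_{\R^{d+1}}\partial_\xi\psi \, d\sigma,
\end{equation*}
i.e.\ the weak trace is $\chi(\cdot;u_0)+\partial_\xi\sigma$, not $\chi(\cdot;u_0)$. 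If $\sigma\neq0$, Steps 2--3 no longer close (the pairing $\int\chi(u(t))\chi(u_0)$ does not converge to $\|u_0\|_{L^1}$, and $\partial_\xi\sigma$ cannot even be paired with the discontinuous $\chi(\xi;u_0)$). To rule out $\sigma$ you need a rigidity argument on weak limits of kinetic functions: e.g.\ from $\sgn(\xi)\overline{\chi}\in[0,1]$ a.e., $\sigma\geq0$, and the decay $\int_{\R^d}\sigma(x,\xi)\,dx\leq\nu(\xi)\in L^\infty_0(\R_\xi)$, one can force $\sigma\equiv 0$; this is in essence item \eqref{prop-perth} of Lemma \ref{perthame2}. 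The paper's proof is immune to this issue by design: there the weak limit $\overline{m}$ of $\int_0^{t_k}m\,dt$ \emph{contains} the atom, and the two-sided sign argument ($\overline{m}\geq0$ as a limit of nonnegative measures, $\overline{m}\leq0$ by Lemma \ref{perthame2}) kills it. Your proof must add such a step; as written it silently assumes $m(\{t=0\})=0$, and the same unproven assumption underlies your dismissal of the third line of \eqref{kds} (which the paper instead derives from the identity \eqref{Inequality-in-Lemma-int-Phi(u)+int-Phi''(xi)(m+n)-less-than} combined with the strong continuity).

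Once this is repaired, the comparison is instructive and rather favorable to you. The paper gets $L^1_{\rm loc}$ compactness from a de La Vall\'ee Poussin function $\Phi$ adapted to $u_0$ (Lemma \ref{poussin}), identifies the limit through Perthame's structural properties of weak-$\star$ limits of kinetic functions (Lemma \ref{perthame2}), and then needs a separate tightness estimate \eqref{claim-int-inf}, with truncated sign entropies and cut-offs supported away from the origin, to upgrade $L^1_{\rm loc}$ to $L^1(\R^d)$. Your route replaces all three blocks: weak $L^2$ compactness is free from the isometry, the rigidity lemma is replaced by ``weak convergence $+$ convergence of norms $\Rightarrow$ strong convergence'' in the Hilbert space $L^2(\R^{d+1})$, and the convergence is global in $x$ at once, with no tightness step. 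The price, besides the atom issue, is that your Step 2 must be carried out with compactly supported $\xi$-truncations (your $\beta\approx\sgn$ is not an admissible test function in \eqref{rkf}); you acknowledge this, and it does work, in fact more easily than in the paper: since $\beta$ is \emph{bounded}, the cut-off error in the dissipation term is controlled by $\frac{C}{R}\int_{R\leq|\xi|\leq 2R}\nu\,d\xi\leq C\sup_{|\xi|\geq R}\nu\ra 0$ directly from \eqref{cond-infinity}, whereas the paper's superlinear $\Phi$ forces the extra precaution $\nu\Phi'\in L^\infty_0$. Your converse implication coincides with the paper's (which leaves it to the reader).
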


We can now state the two main results of this paper.
\begin{theo}[Equivalence between entropy and kinetic solutions]\label{thm:equivalence}
Let \eqref{condition-on-F}--\eqref{condition-symmetry} hold, $u_0 \in L^1 \cap L^\infty(\R^d)$ and $u \in L^\infty(\R^+;L^1(\R^d)) \cap L^\infty(\R^+ \times \R^d)$. Then $u$ is an entropy solution of \eqref{cauchy-problem} if and only if it is a kinetic solution of \eqref{cauchy-problem}.
\end{theo}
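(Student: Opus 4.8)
The plan is to use Theorem~\ref{key-theo} to replace the words ``entropy solution'' by the reformulated entropy inequality \eqref{rnei-fdpe}, so that the task reduces to the purely kinetic passage between \eqref{rnei-fdpe} (for every convex $C^2$ entropy $S$ and every $0\le\psi\in\mathcal D(\R^{d+1})$) and the kinetic equation \eqref{rkf} (for every $\varphi\in\mathcal D(\R^{d+2})$, with a nonnegative $m$). The bridge between the two is the choice of product test function $\varphi(t,x,\xi)=S'(\xi)\psi(t,x)$ together with the representation \eqref{fundamental-theo} and its analogues $\eta(u)-\eta(0)=\int_\R S'F'\chi\,d\xi$ and $\beta(u)-\beta(0)=\int_\R S'A'\chi\,d\xi$. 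A key simplification, available here because $u\in L^\infty$, is that both $\chi(\xi;u)$ and $n$ vanish for $|\xi|>\|u\|_{L^\infty}$; hence the non-compactly supported factor $S'(\xi)$ can be multiplied by a cutoff $\rho(\xi)$ equal to $1$ on $\{|\xi|\le\|u\|_{L^\infty}\}$ without altering any integral in which it occurs.

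For the implication kinetic $\Rightarrow$ entropy I would insert $\varphi=\rho(\xi)S'(\xi)\psi(t,x)$ into \eqref{rkf} and integrate first in $\xi$. Using \eqref{fundamental-theo} and its analogues, and the self-adjointness of $g$ together with $g[1]=0$ (which removes the constants $S(0),\eta(0),\beta(0)$ after integration in $(t,x)$), the left-hand side collapses exactly to the left-hand side of \eqref{rnei-fdpe}, while the right-hand side becomes $\int S''(m+n)\psi$. Since $S''\ge0$, $m\ge0$ and $\psi\ge0$, discarding the $m$-contribution turns this identity into the inequality \eqref{rnei-fdpe}; the initial terms match because $\int_\R\chi(\xi;u_0)S'(\xi)\,d\xi=S(u_0)-S(0)$ and the $S(0)$ constant is absorbed as above.

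The converse is the substantial direction. Working first with test functions supported in $(0,\infty)\times\R^{d+1}$, I would introduce the distribution $Q:=\partial_t\chi(\xi;u)+F'(\xi)\cdot\nabla_x\chi(\xi;u)+A'(\xi)g_x[\chi(\xi;u)]-\partial_\xi n$ and construct its $\xi$-primitive $m$ normalized to vanish on $\{\xi<-\|u\|_{L^\infty}\}$; the confinement of the $\xi$-supports makes this primitive well defined and again supported in $\{|\xi|\le\|u\|_{L^\infty}\}$, so that $\partial_\xi m=Q$, i.e. the interior kinetic equation \eqref{kds} holds by construction. Testing $Q$ against $S'(\xi)\psi(t,x)$ reproduces, up to sign, the entropy expression, so that \eqref{rnei-fdpe} becomes $\langle m,S''(\xi)\psi(t,x)\rangle\ge0$; as $S''$ ranges over all nonnegative $\theta\in\mathcal D(\R_\xi)$ this yields $\langle m,\theta\otimes\psi\rangle\ge0$ for all nonnegative products. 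Choosing the quadratic entropy $S(\xi)=\xi^2/2$ gives the a priori bound $\int(m+n)\le\nu$ of \eqref{cond-infinity}, whence $m\in M^1_{\rm loc}$; knowing that $m$ is a bounded measure, nonnegativity on product test functions upgrades to $m\ge0$ by approximating measurable rectangles through a monotone-class argument.

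It remains to install the initial datum. Since $u$ is an entropy solution it belongs to $C([0,\infty);L^1(\R^d))$ with trace $u_0$ by Theorem~\ref{thm:wp-entropy}, and the interior equation together with this $L^1$ trace yields the $t\da0$ vanishing of $\int_0^t\!\int\!\int_{-R}^R(m+n)$; Proposition~\ref{continuity-in-time} then packages the interior equation, the initial trace and this vanishing into the full formulation \eqref{rkf} with its initial term. I expect the main obstacle to be precisely the construction of $m$ in this converse direction: producing a genuine nonnegative element of $M^1_{\rm loc}$ out of the family of entropy inequalities, i.e. simultaneously securing the $M^1_{\rm loc}$ bound from the quadratic entropy and promoting nonnegativity-on-products to nonnegativity of the measure, all while carefully justifying the $\xi$-truncations against the confined supports.
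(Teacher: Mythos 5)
Your skeleton is the same as the paper's: reduce to the reformulated inequality \eqref{rnei-fdpe} via Theorem~\ref{key-theo}, pass between \eqref{rnei-fdpe} and \eqref{rkf} using product test functions $S'(\xi)\psi(t,x)$ together with \eqref{fundamental-theo}, build $m$ as a $\xi$-primitive in the converse direction, and install the initial datum via Theorem~\ref{thm:wp-entropy} and Proposition~\ref{continuity-in-time}. But there are genuine gaps. First, in the direction kinetic $\Rightarrow$ entropy you justify the $\xi$-cutoff by the vanishing of $\chi$ and $n$ for $|\xi|>\|u\|_{L^\infty}$, yet the identity \eqref{rkf} also involves the \emph{unknown} measure $m$: testing with $\rho(\xi)S'(\xi)\psi(t,x)$ produces the extra term $\int m\,\rho'S'\psi$, which has no sign and does not disappear unless ${\rm supp}\, m\subseteq\{\essinf u\le\xi\le\esssup u\}$. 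That support property is not a consequence of $u\in L^\infty$ alone; it is the paper's Lemma~\ref{lem-support}, proved from the kinetic equation (which gives $\partial_\xi(m+n)=0$ outside the range of $u$) combined with the decay condition \eqref{cond-infinity}. The symmetric issue arises in your converse direction: the $\xi$-primitive of $Q$ normalized at $-\infty$ is, for $\xi>\esssup u$, equal to the $\xi$-constant distribution $\partial_t u+\nabla_x\cdot F(u)+g_x[A(u)]$, and its vanishing is \emph{not} implied by ``confinement of the $\xi$-supports''; it holds only because entropy solutions are weak solutions of \eqref{cauchy-problem}, which the paper explicitly invokes to prove \eqref{claim-support}. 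Without this, your $m$ is not supported in $\{|\xi|\le\|u\|_{L^\infty}\}$ and the subsequent tests against functions non-compact in $\xi$, as well as \eqref{cond-infinity}, break down. You also skip a prerequisite: to define $Q$ as a distribution one must first know $n\in L^1$, which is the paper's Lemma~\ref{lem-absolutely-continuous}.

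Second, and most seriously, the quadratic entropy $S(\xi)=\xi^2/2$ does not give \eqref{cond-infinity}. It yields only the $\xi$-integrated bound $\int_0^\infty\int_{\R^{d+1}}(m+n)\,dt\,dx\,d\xi\le\frac12\|u_0\|_{L^2(\R^d)}^2$, whereas Definition~\ref{defi-kinetic} demands the slice-wise bound $\int_0^\infty\int_{\R^d}(m+n)(t,x,\xi)\,dt\,dx\le\nu(\xi)$ for a.e.\ $\xi$ with $\nu\in L^\infty_0(\R_\xi)$; finite total mass does not control the disintegration in $\xi$ (the projection of $m$ onto $\R_\xi$ could a priori concentrate). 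The paper's Lemma~\ref{lem:cond-infinity} obtains the correct bound by testing with the tailored family of entropies $S(\xi)=\int_\R\{(\xi-\zeta)^+\mathbf{1}_{\zeta>0}+(\xi-\zeta)^-\mathbf{1}_{\zeta<0}\}\psi(\zeta)\,d\zeta$, $0\le\psi\in\mathcal{D}(\R)$, and a limiting procedure in the space and time cut-offs. Relatedly, your positivity upgrade is circular: a monotone-class argument on measurable rectangles presupposes that $m$ is already a (signed, locally bounded) measure, but testing a distribution against functions constant in $\xi$ cannot establish that it is a measure. The clean route is the paper's Lemma~\ref{lem-density}: every nonnegative element of $\mathcal{D}((0,\infty)\times\R^{d+1})$ is approximated by finite sums $\sum_i\varphi_i(t,x)\phi_i(\xi)$ with nonnegative factors (mollification by a product kernel), so that $m\ge 0$ holds as a distribution, and a nonnegative distribution is automatically a Radon measure; the bound \eqref{cond-infinity} then comes afterwards. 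Finally, the vanishing of $\int_0^t\int_{\R^d}\int_{-R}^R(m+n)$ as $t\da 0$ is not produced by the $L^1$ trace and the interior equation, but by the normalization $m(\{t=0\}):=0$ together with dominated convergence, which again requires \eqref{cond-infinity} first.
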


\begin{theo}[Well-posedness in the pure $L^1$ setting]\label{thm:wp-kinetic}
Let~$u_0 \in L^1(\R^d)$ and let us assume \eqref{condition-on-F}--\eqref{condition-symmetry}.
Then there
exists a unique kinetic solution~$u$ of ~\eqref{cauchy-problem} and a unique measure $m$ satisfying Definition \ref{defi-kinetic}. This solution belongs to $C
([0,\infty);L^1(\R^d))$ with $u(0,\cdot)=u_0(\cdot)$. Moreover,
\begin{equation*}
\|u(t,\cdot)\|_{L^1(\R^d)}\leq\|u_0\|_{L^1(\R^d)} \quad \mbox{for all $t \geq 0$}
\end{equation*}
and if $\tilde u$ is the solution of~\eqref{cauchy-problem}
associated to ~$\tilde u(0,\cdot)=\tilde u_0(\cdot) \in  L^1(\R^d)$, then
\begin{equation*}
\|u(t,\cdot)-\tilde u(t,\cdot)\|_{L^1(\R^d)} \leq \|u_0-\tilde u_0\|_{L^1(\R^d)}  \quad \mbox{for all $t \geq 0$.}
\end{equation*}
\end{theo}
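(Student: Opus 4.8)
The plan is to prove the $L^1$ contraction first, since uniqueness of the pair $(u,m)$, the $L^1$ bound, and (together with a compactness argument) existence all follow from it. Let $(u,m)$ and $(\tilde u,\tilde m)$ be two kinetic solutions in the sense of Definition~\ref{defi-kinetic}, with data $u_0,\tilde u_0\in L^1(\R^d)$, and set $f:=\chi(\xi;u)$, $\tilde f:=\chi(\xi;\tilde u)$ as in~\eqref{def-chi}. The starting point is the identity $\|u(t,\cdot)-\tilde u(t,\cdot)\|_{L^1(\R^d)}=\int_{\R^{d+1}}|f-\tilde f|\,dx\,d\xi$, so it suffices to show that this quantity is nonincreasing in $t$. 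Formally I would test the two kinetic equations in~\eqref{kds} against a smooth approximation of $\sgn(f-\tilde f)$ and subtract. The local transport term $F'(\xi)\cdot\nabla_x(f-\tilde f)$ integrates to zero in $x$ by its divergence structure, so the whole difficulty concentrates in the nonlocal transport terms $A'(\xi)\,g_x[f-\tilde f]$ and the measure terms $\partial_\xi\big((m+n)-(\tilde m+\tilde n)\big)$.

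This is where the explicit representation of the dissipation from Theorem~\ref{key-theo} does the work. Using the self-adjointness of $g$ granted by the symmetry~\eqref{condition-symmetry}, I would pair the nonlocal transport across the two solutions and combine the resulting cross contributions with the explicit dissipation densities $n,\tilde n$ from~\eqref{def-nonloc-dissip}. The elementary inequality of Lemma~\ref{Lemma-F(a,b,c,d)-leq_G(a,b,c,d)} is the crucial device here: it bounds the nonlocal cross terms by the nonlocal dissipation, and does so \emph{uniformly in the jump variable} $z$, which is precisely what allows small and large jumps to be treated on the same footing and what replaces the cutting technique used for bounded entropy solutions. Since the defect measures satisfy $m,\tilde m\ge 0$, they enter with the favorable sign, and one arrives formally at $\tfrac{d}{dt}\|u-\tilde u\|_{L^1(\R^d)}\le 0$.

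To make this rigorous I would regularize in $x$ by mollification and in $\xi$ by convolution, the half-value normalization in~\eqref{def-char} being tailored to the latter step. The global bound~\eqref{cond-infinity}, with $\nu\in L^\infty_0(\R_\xi)$, guarantees that $m+n$ and $\tilde m+\tilde n$ are finite and controls their tails in $\xi$, so that the regularization errors vanish and the limiting inequality survives. The strong initial trace provided by Proposition~\ref{continuity-in-time} (the second and third conditions in~\eqref{kds}) handles the limit $t\da 0$, and integrating in time yields the contraction for all $t\ge 0$. Taking $u_0=\tilde u_0$ gives $u=\tilde u$; uniqueness of $m$ then follows because~\eqref{rkf} determines $\partial_\xi(m+n)$, the vanishing-at-infinity normalization~\eqref{cond-infinity} fixes $m+n$, and $n$ is explicitly determined by $u$ through~\eqref{def-nonloc-dissip}. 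The $L^1$ bound is the contraction applied to the zero solution.

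For existence I would truncate the data, $u_0^k:=(u_0\wedge k)\vee(-k)\in L^1\cap L^\infty(\R^d)$ with $u_0^k\to u_0$ in $L^1$, and let $u^k$ be the entropy solutions from Theorem~\ref{thm:wp-entropy}, which are kinetic solutions with measures $m^k$ by Theorem~\ref{thm:equivalence}. The contraction makes $(u^k)$ a Cauchy sequence in $C([0,\infty);L^1(\R^d))$, whence $u^k\to u$ in this space, giving the time continuity and $u(0,\cdot)=u_0$. The bound~\eqref{cond-infinity} controls $m^k+n^k$ in $M^1_{\mathrm{loc}}$ uniformly, with the associated $\nu^k$ converging in $L^\infty_0(\R_\xi)$; extracting a weak-$*$ limit of $m^k+n^k$, using the a.e.\ convergence of $u^k$ together with the uniform integrability from the $\nu^k$ bound to pass to the limit $n^k\to n$ and in the linear terms of~\eqref{rkf}, and setting $m$ as the difference of the limits, one checks $m\ge 0$ by lower semicontinuity and recovers~\eqref{cond-infinity}, so that $(u,m)$ is a kinetic solution. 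The hard part of the whole argument is the nonlocal dissipation bookkeeping in the contraction step: establishing, and then justifying after regularization, that the nonlocal cross terms are dominated by $n+\tilde n$ via Lemma~\ref{Lemma-F(a,b,c,d)-leq_G(a,b,c,d)} uniformly in $z$, which is the genuinely new point compared with the second-order theory.
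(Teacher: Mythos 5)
Your architecture coincides with the paper's: the $L^1$ contraction is proved at the kinetic level by dominating the nonlocal cross terms with the explicit dissipations $n,\tilde n$ through Lemma~\ref{Lemma-F(a,b,c,d)-leq_G(a,b,c,d)} (uniformly in the jump $z$, with $m,\tilde m\geq 0$ entering with the favorable sign), the initial trace is taken from Proposition~\ref{continuity-in-time}, existence follows by truncating the data, invoking Theorems~\ref{thm:wp-entropy} and~\ref{thm:equivalence}, extracting a weak-$\star$ limit of the measures and recovering the lower bound by $n$ via Fatou's lemma, and $m$ is unique because the kinetic equation determines $\partial_\xi(m+n)$ while \eqref{cond-infinity} fixes the antiderivative. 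Note also that your formal multiplier $\sgn(\chi(\xi;u)-\chi(\xi;\tilde u))$ coincides a.e.\ with $\chi(\xi;u)-\chi(\xi;\tilde u)$ itself (the difference takes values in $\{-1,0,1\}$), so your formal step is exactly the paper's time derivative of $\int(\chi-\tilde\chi)^2\,dx\,d\xi$. One stylistic deviation: you regularize in $\xi$ as well as in $(t,x)$, i.e.\ the Chen--Perthame route, whereas the paper deliberately avoids $\xi$-regularization in the main argument (it is used only in passing, in the proof of Lemma~\ref{lem:pointwise-control}\eqref{pointwise-dissipation}, where the even kernel produces the value $\tfrac12$ of \eqref{def-char}); this is possible precisely because $n$ is absolutely continuous, and it spares one from estimating commutators of the $\xi$-mollification with $F'(\xi)$, $A'(\xi)$ and with $g_x$.

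There is, however, one concrete gap in your plan. The nonlinearities are only locally Lipschitz, so $F'(\xi)$ and $A'(\xi)$ are unbounded on $\R_\xi$; since $u$ is merely $L^1$, the support of $\chi(\xi;u(t,x))$ in $\xi$ is not uniformly bounded, and integrals such as $\int_{\R^d}\int_{\R} F'(\xi)\,\nabla_x\chi_\epsilon\,(\cdots)\,dx\,d\xi$ need not converge absolutely. Hence the assertion that the local transport term ``integrates to zero in $x$ by its divergence structure'' cannot be invoked over all of $\R_\xi$, and the tail bound \eqref{cond-infinity} does not repair this, since it controls $m+n$ but says nothing about the growth of $F'$, $A'$. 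This is exactly what the paper's truncation $T_R$ of \eqref{def-truncature} is for: all identities are first established on $|\xi|\leq R$, the compatibility of the truncation with the $F\leq G$ comparison is secured by \eqref{monotonicity-bis}--\eqref{monotonicity}, and the boundary terms $(m_\epsilon+n_\epsilon)(t,x,\pm R)$ produced by the $\xi$-integration are killed as $R\to\infty$ using the $C_0$ majorant $\nu_\epsilon$ of Lemma~\ref{lem:pointwise-control}\eqref{cond-infinity-eps}. Your proof would need this truncation layer (or an artificial restriction to globally Lipschitz $F$, $A$) to close; the rest of your outline matches the paper's proof.
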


\begin{remark}
More generally, we have
\begin{equation*}
\|(u(t,\cdot)-\tilde{u}(t,\cdot))^\pm\|_{L^1(\R^d)}\leq \|(u_0-\tilde{u}_0)^\pm\|_{L^1(\R^d)},
\end{equation*}
so that $u_0\leq \tilde u_0$ entails $u\leq \tilde u$.
\end{remark}
The rest of this paper is devoted to the proofs of these results. The proof of Proposition \ref{continuity-in-time} is given in Appendix~\ref{appendix:prop}, for the sake of completeness. 

\section{Preliminary lemmas}\label{sec:prelim}

In this section, we give some basic results that will be useful in the sequel.

\subsection{Main properties of the kinetic function $\chi$} 
 
 Let us begin with some properties concerning the  function  defined in \eqref{def-chi}. We omit the proofs which can be found in \cite{Per09}, for instance. 
 
 Let us  first make precise the definition of the sign function used throughout:
\begin{equation*}
\sgn(\xi):=
\begin{cases}
1 & \mbox{if } \xi >0,\\
-1 & \mbox{if } \xi<0,\\
0 & \mbox{if } \xi=0.
\end{cases}
\end{equation*}

\begin{lem}\label{properties-of-khi}
\begin{enumerate}[{\rm (i)}]
\item For any reals $u$ and $\xi$,
$$
\sgn(\xi)\chi(\xi;u) =|\chi(\xi;u)|=\left(\chi(\xi;u)\right)^2.
$$
\item For any reals $u$ and $\tilde{u}$,
$$
 |u-v|
=\int_{\R} |\chi(\xi;u)-\chi(\xi;\tilde{u})| \, d \xi=\int_{\R} \left(\chi(\xi;u)-\chi(\xi;\tilde{u})\right)^{2} d\xi.
$$
\end{enumerate}
\end{lem}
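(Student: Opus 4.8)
The plan is to reduce both identities to the elementary piecewise-constant structure of $\chi$ recorded in \eqref{def-chi}, together with a single indicator representation of $\chi$ that makes part (ii) transparent.

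For (i) I would argue pointwise, for every $\xi$. Since $\chi(\xi;u)$ takes values only in $\{-1,0,1\}$, the equality $|\chi(\xi;u)|=\left(\chi(\xi;u)\right)^2$ is immediate (it holds for each $t\in\{-1,0,1\}$ because $t^2=|t|$). For the remaining equality $\sgn(\xi)\chi(\xi;u)=|\chi(\xi;u)|$, I would read off from \eqref{def-chi} that $\chi(\xi;u)=1$ forces $\xi>0$ and $\chi(\xi;u)=-1$ forces $\xi<0$; hence whenever $\chi(\xi;u)\neq 0$ one has $\sgn(\xi)=\chi(\xi;u)$, so $\sgn(\xi)\chi(\xi;u)=\left(\chi(\xi;u)\right)^2=|\chi(\xi;u)|$, while all three expressions vanish when $\chi(\xi;u)=0$. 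This is a short case check.

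The key to (ii) is the representation
\[
\chi(\xi;u)=\mathbf{1}_{\{\xi<u\}}-\mathbf{1}_{\{\xi<0\}}\qquad\text{for a.e. }\xi\in\R,
\]
which I would verify by inspecting the sign of $u$ and the position of $\xi$ relative to $\min\{0,u\}$ and $\max\{0,u\}$, matching it against \eqref{def-chi}. Subtracting this representation written for $u$ and for $\tilde u$, the common term $\mathbf{1}_{\{\xi<0\}}$ telescopes, leaving
\[
\chi(\xi;u)-\chi(\xi;\tilde u)=\mathbf{1}_{\{\xi<u\}}-\mathbf{1}_{\{\xi<\tilde u\}}\qquad\text{a.e.}
\]
The right-hand side equals $\sgn(u-\tilde u)$ on $\conv\{u,\tilde u\}$ and $0$ elsewhere (a.e.), so its absolute value is exactly $\mathbf{1}_{\conv\{u,\tilde u\}}(\xi)$, and its square is the same function since the difference again lies in $\{-1,0,1\}$. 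Integrating in $\xi$ yields the length of the open interval $\conv\{u,\tilde u\}=\left(\min\{u,\tilde u\},\max\{u,\tilde u\}\right)$, namely $|u-\tilde u|$, which proves both equalities in (ii) simultaneously. (The middle and right members of the stated identity $|u-v|=\cdots$ should read $|u-\tilde u|$; the letter $v$ is evidently a misprint for $\tilde u$.)

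There is no genuine obstacle here: both parts follow from the fact that $\chi$ and the differences of such functions are $\{-1,0,1\}$-valued and piecewise constant. The only point deserving a word of care is that $\chi$ is defined via strict inequalities, so its values at the endpoints $\xi\in\{0,u,\tilde u\}$ differ from those of the indicator representation above; since these points form a Lebesgue-null set, they do not affect any of the integral identities in (ii), and they are irrelevant for the pointwise identities in (i), which I would derive directly from \eqref{def-chi} for every $\xi$.
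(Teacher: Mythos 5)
Your proof is correct. Note that the paper itself omits the proof of this lemma entirely (it refers the reader to Perthame's book \cite{Per09}), and your argument — a pointwise case check on the values $\{-1,0,1\}$ for (i), and the a.e.\ representation $\chi(\xi;u)=\mathbf{1}_{\{\xi<u\}}-\mathbf{1}_{\{\xi<0\}}$ for (ii), so that the difference $\chi(\xi;u)-\chi(\xi;\tilde u)$ becomes $\pm\mathbf{1}_{\conv\{u,\tilde u\}}$ up to a null set — is exactly the standard one, complete in all details, including the correct observation that $v$ in the statement is a misprint for $\tilde u$.
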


\begin{remark}\label{isometry}
The map
$
u\in L^{\infty}(\R^+; L^{1}(\R^{d}))  \mapsto \chi(\xi;u) \in L^{\infty}(\R^+; L^{1}(\R^{d+1}))
$
is thus an isometry.
\end{remark}


\subsection{Main properties of the nonlocal diffusion operator $g$} Let us continue with standard results on  the operator defined in \eqref{def-PV}.  The proofs are gathered in Appendix \ref{appendix:tech} for the sake of completeness; see also \cite{Lan72,Sat99,CiJa11,AlCiJa12,AlCiJapr}. 

Let us first precise the sense of $g$ for sufficiently regular functions.
\begin{lem}\label{lem:well-def-g}
Under \eqref{condition-on-mu}--\eqref{condition-symmetry}, $g$ is  still  well defined by \eqref{def-PV} from
$W^{2,1} (\R^d)$
(resp. $C_b^2(\R^d)$)
into $L^1(\R^d)$ (resp. $C_b(\R^d)$). It is moreover linear, bounded, and
\begin{equation}\label{eq:IBPformula}
\int_{\R^d} \varphi g[f] \, dx=\int_{\R^d} f g[\varphi] \, dx \quad \forall f \in W^{2,1}(\R^d), \forall \varphi \in C^2_b(\R^d).
\end{equation}

\end{lem}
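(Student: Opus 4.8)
The plan is to avoid the principal-value expression \eqref{def-PV} and work throughout with the equivalent absolutely convergent form \eqref{Levy-form}, in which I fix once and for all the cut-off radius $r=1$. Writing, for a generic test function $\psi$,
$$
g[\psi](x)=-\int_{\R^d}\bigl(\psi(x+z)-\psi(x)-z\cdot\nabla\psi(x)\,\mathbf{1}_{|z|\leq 1}\bigr)\,\mu(z)\,dz,
$$
I would systematically split the $z$-integral into the near region $\{|z|\leq 1\}$ and the far region $\{|z|>1\}$. On the near region a second-order Taylor expansion controls the integrand by a multiple of $|z|^2$, which is $\mu$-integrable since $\int_{|z|\leq 1}|z|^2\mu(z)\,dz<\infty$ by \eqref{condition-on-mu}; on the far region the integrand is controlled by twice the sup-norm of $\psi$, and $\int_{|z|>1}\mu(z)\,dz<\infty$ again by \eqref{condition-on-mu}. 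Linearity is immediate from this representation, so it remains to establish the mapping properties and the self-adjointness identity.

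For the mapping properties I would treat the two target spaces in parallel. If $\psi=\varphi\in C_b^2$, the Taylor bound $|\varphi(x+z)-\varphi(x)-z\cdot\nabla\varphi(x)|\leq\tfrac12|z|^2\|\nabla^2\varphi\|_{L^\infty}$ is uniform in $x$, so the integral converges absolutely and $\|g[\varphi]\|_{L^\infty}\leq C\|\varphi\|_{C^2_b}$ with $C$ depending only on $\mu$; continuity of $g[\varphi]$ then follows from dominated convergence, the dominating function $\tfrac12\|\nabla^2\varphi\|_{L^\infty}|z|^2\mathbf{1}_{|z|\leq 1}+2\|\varphi\|_{L^\infty}\mathbf{1}_{|z|>1}$ being $\mu$-integrable and independent of $x$, whence $g[\varphi]\in C_b$. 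If $\psi=f\in W^{2,1}$, I would instead estimate the $L^1_x$-norm: the integral form of Taylor's remainder together with translation invariance of Lebesgue measure gives $\int_{\R^d}|f(x+z)-f(x)-z\cdot\nabla f(x)|\,dx\leq\tfrac12|z|^2\|\nabla^2 f\|_{L^1}$ for $|z|\leq 1$ and $\int_{\R^d}|f(x+z)-f(x)|\,dx\leq 2\|f\|_{L^1}$ for $|z|>1$; Tonelli then yields $g[f]\in L^1$ with $\|g[f]\|_{L^1}\leq C\|f\|_{W^{2,1}}$. Note that the matching of the $|z|^2$ Taylor gain with the $|z|^2\wedge 1$ integrability of $\mu$ is exactly why $W^{2,1}$ (resp. $C^2_b$) regularity, and not merely $W^{1,1}$, is required.

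For the self-adjointness identity \eqref{eq:IBPformula}, both sides are finite since $g[f]\in L^1$, $\varphi\in L^\infty$, $f\in L^1$ and $g[\varphi]\in L^\infty$. Starting from $\int\varphi\,g[f]\,dx$ written through \eqref{Levy-form}, I would apply Fubini (licit by the integrability just established, multiplied by $\|\varphi\|_{L^\infty}$) to bring the $x$-integral inside; in the inner integral I would then change variables $x\mapsto x-z$ in the translation term and integrate by parts in the gradient-correction term, using $f\in W^{1,1}$ and $\varphi\in C^1_b$ so that $f\varphi\in W^{1,1}$ and $\int\varphi\,z\cdot\nabla f\,dx=-\int f\,z\cdot\nabla\varphi\,dx$ with no boundary contribution. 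This recasts the inner integral as $\int f(x)\bigl(\varphi(x-z)-\varphi(x)+z\cdot\nabla\varphi(x)\,\mathbf{1}_{|z|\leq 1}\bigr)\,dx$. Finally I would invoke the evenness of $\mu$ from \eqref{condition-symmetry} to substitute $z\mapsto -z$, which turns $\varphi(x-z)$ into $\varphi(x+z)$ and flips the sign of $z\cdot\nabla\varphi$ while leaving both $\mathbf{1}_{|z|\leq 1}$ and $\mu(z)\,dz$ invariant; undoing Fubini then produces exactly $\int f\,g[\varphi]\,dx$.

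The convergence and boundedness bookkeeping is routine; the only genuinely delicate point is the self-adjointness computation, where one must correctly track the sign of the first-order correction $z\cdot\nabla\psi\,\mathbf{1}_{|z|\leq 1}$ through both the integration by parts and the reflection $z\mapsto -z$. The key structural observation is that the indicator $\mathbf{1}_{|z|\leq 1}$ is even in $z$, so the reflection reinstates the correction term in precisely the form dictated by \eqref{Levy-form}; this is what makes the symmetrization compatible with the evenness of $\mu$ and lets the identity close cleanly.
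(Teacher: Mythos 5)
Your proof is correct, and its first half is essentially the paper's: the paper also establishes the mapping properties from the representation \eqref{fbg}, i.e.\ the same near/far splitting in $z$ with the integral-form Taylor remainder $\int_0^1(1-\tau)\nabla^2 f(x+\tau z)z^2\,d\tau$ controlling the near region against $\int_{|z|\leq 1}|z|^2\mu(z)\,dz<\infty$, exactly as you argue. Where you genuinely diverge is the proof of \eqref{eq:IBPformula}. The paper never confronts the first-order correction term at all: it proves the identity for the truncated operators $\int_{|z|>r}(\,\cdot\,)\mu(z)\,dz$ --- which carry no correction --- by splitting the double integral and performing the changes of variables $x+z\mapsto x$ and $-z\mapsto z$ (evenness of $\mu$), and then sends $r\da 0$; the price is that one must justify passing to the limit on both sides of the truncated identity. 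You instead work at the fixed, absolutely convergent level $r=1$, which removes any limiting procedure, but you must then handle the correction term $z\cdot\nabla f\,\mathbf{1}_{|z|\leq 1}$ head-on; you do so correctly via the classical integration by parts $\int\varphi\,z\cdot\nabla f\,dx=-\int f\,z\cdot\nabla\varphi\,dx$ (licit for $f\in W^{1,1}$, $\varphi\in C^1_b$, with no boundary term) followed by the reflection $z\mapsto-z$, under which $\mu(z)\,dz$ and $\mathbf{1}_{|z|\leq 1}$ are invariant while the sign of the gradient term flips back into the form required by \eqref{Levy-form}. Both routes are sound and of comparable length; yours trades the paper's $r\da 0$ limit for one extra integration by parts, and has the advantage that every integral in the chain converges absolutely. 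One small point you gloss over (the paper is equally terse): the lemma asserts well-definedness via the principal value \eqref{def-PV}, not only via \eqref{Levy-form}, so you should record the two-line equivalence --- for $r<1$, evenness of $\mu$ gives $\int_{r<|z|\leq 1}z\cdot\nabla\psi(x)\,\mu(z)\,dz=0$ over the symmetric annulus, so the truncated P.V.\ integrals coincide with truncations of your absolutely convergent integral, whence the limit in \eqref{def-PV} exists and equals it.
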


\begin{remark}
If $f$ is merely integrable, we can thus define $g[f]$ in the distribution sense by $\langle g[f],\varphi \rangle_{\mathcal{D}',\mathcal{D}}:=\int f g[\varphi]$.
\end{remark}


%
%

Let us continue with another useful formula; it is interpreted as an {integration by parts} formula involving the square root of $g$. 

\begin{lem}[Bilinear form]
\label{lem:bf}
Assume \eqref{condition-on-mu}--\eqref{condition-symmetry}, $f \in W^{1,1}(\R^d)$ and $\tilde{f} \in W^{1,\infty}(\R^d)$.
Then
\begin{equation*}
\int_{\R^d} \tilde{f} g[f] \, dx =\frac{1}{2} \int_{\R^{2d}}  (f(x)-f(y))(\tilde{f}(x)-\tilde{f}(y)) \mu(x-y) \, dx \, dy.
\end{equation*}
\end{lem}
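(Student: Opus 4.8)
The plan is to establish the formula first at a ``truncated'' level, where the Lévy measure is replaced by its restriction to $\{|z|>r\}$ (a \emph{finite} measure), and then to let $r \da 0$. Introduce
\[
g_r[f](x) := -\int_{|z|>r}(f(x+z)-f(x))\,\mu(z)\,dz,
\]
which is well defined since $\mu$ restricted to $\{|z|>r\}$ is finite (by \eqref{condition-on-mu}) while $f \in L^1$ and $\tilde f \in L^\infty$. By the very definition \eqref{def-PV} of the principal value, $g[f]=\lim_{r\da0}g_r[f]$, so that $\int \tilde f\,g[f]\,dx=\lim_{r\da0}\int \tilde f\,g_r[f]\,dx$; the whole point is to identify this limit with the symmetric bilinear expression.

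First I would check that the right-hand side converges \emph{absolutely} for $f \in W^{1,1}(\R^d)$ and $\tilde f \in W^{1,\infty}(\R^d)$, splitting the $z$-domain into $\{|z|\le1\}$ and $\{|z|>1\}$. Near the origin, the Lipschitz bound $|\tilde f(x+z)-\tilde f(x)|\le\|\nabla\tilde f\|_{L^\infty}|z|$ together with the translation estimate $\|f(\cdot+z)-f\|_{L^1}\le\|\nabla f\|_{L^1}|z|$ gives
\[
\int_{\R^d}\bigl|(f(x+z)-f(x))(\tilde f(x+z)-\tilde f(x))\bigr|\,dx \le \|\nabla\tilde f\|_{L^\infty}\|\nabla f\|_{L^1}\,|z|^2,
\]
which is $\mu$-integrable on $\{|z|\le1\}$ precisely because $\int(|z|^2\wedge1)\,\mu<\infty$. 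For $|z|>1$ I would instead bound the $x$-integral crudely by $4\|\tilde f\|_{L^\infty}\|f\|_{L^1}$, which is $\mu$-integrable there since $\int_{|z|>1}\mu<\infty$. Hence the double integral over $\R^{2d}$ is finite.

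The algebraic heart is the truncated identity
\[
\int_{\R^d}\tilde f\,g_r[f]\,dx = \frac12\int_{\R^d}\int_{|z|>r}(f(x+z)-f(x))(\tilde f(x+z)-\tilde f(x))\,\mu(z)\,dz\,dx. \qquad (\star)
\]
To prove $(\star)$ I would apply Fubini (legitimate because $|\tilde f(x)(f(x+z)-f(x))|$ integrates in $x$ to at most $2\|\tilde f\|_{L^\infty}\|f\|_{L^1}$ against the finite measure $\mu\,\mathbf{1}_{|z|>r}$), expand the product on the right into four terms, and use the shift-invariance of the Lebesgue measure together with the evenness \eqref{condition-symmetry} of $\mu$ (the substitution $z\mt-z$ leaves both $\{|z|>r\}$ and $\mu$ unchanged) to match the two cross terms against $\int\tilde f\,g_r[f]$.

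Finally I would let $r\da0$. On the right of $(\star)$, since the full integrand is absolutely integrable, dominated convergence ($\mathbf{1}_{|z|>r}\ua1$) yields the integral over all of $\R^{2d}$, and the change of variables $y=x+z$ (with $\mu(y-x)=\mu(x-y)$ by evenness) turns it into the stated right-hand side. On the left, $\int\tilde f\,g_r[f]\ra\int\tilde f\,g[f]$ by the definition of the principal value. The main subtlety to be careful about is exactly this last passage: for $f$ merely in $W^{1,1}$, $g[f]$ need not be a genuine $L^1$ function (the small-jump part can diverge when $\mu$ is too singular, e.g.\ $\alpha\ge1$ in the fractional Laplacian case), so that $\int\tilde f\,g[f]$ must be read as $\lim_{r\da0}\int\tilde f\,g_r[f]$ --- and $(\star)$ is precisely what guarantees that this limit exists and is finite. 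When $f\in W^{2,1}$ one additionally recovers genuine $L^1$ convergence $g_r[f]\ra g[f]$ via the second-order Taylor bound and $\int_{|z|\le r}|z|^2\,\mu\ra0$, restoring the pointwise interpretation of the formula.
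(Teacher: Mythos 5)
Your proof is correct and takes essentially the same route as the paper's: truncate to $\{|z|>r\}$ where the measure is finite, prove the symmetrized identity at the truncated level (the paper does this in $(x,y)$ coordinates by exchanging the roles of $x$ and $y$ and averaging, which is the same algebra as your four-term expansion using translation invariance and evenness), and pass to the limit $r \da 0$ by dominated convergence with exactly the same $W^{1,1}\times W^{1,\infty}$ domination against $(|z|\wedge 2)^2\,\mu(z)\,dz$. Your closing remark on reading the left-hand side as $\lim_{r\da 0}\int \tilde f\, g_r[f]$ matches how the paper's proof implicitly interprets it as well.
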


\begin{remark}\label{rem:pushforward}
\begin{enumerate}
\item The Borel measure $\mu(x-y) \, dx \, dy$ is defined as the pushforward measure 
$$
\mu(x-y) \, dx \, dy:=T_\# \left(\mu(z) \, dx \, dz\right)
$$associated to
$
T:(x,z) \in \R^{2d} \mapsto (x,x+z) \in \R^{2d}.
$
We can thus change variables by
\begin{equation}
\label{change-variable}
\int_{\R^{2d}} f(x,y) \mu(x-y) \, dx \, dy =  \int_{\R^{2d}} f(x,x+z) \mu(z) \, dx \, dz
\end{equation}
for any Borel measurable $f:\R^{2d} \to [0,\infty]$.
\item Note that $\mu(x-y) \, dx \, dy$ is $\sigma$-finite because $\mu(z) \, dz$ is $\sigma$-finite. The Fubini's theorem then applies to define
$
\mu(x-y) \, dx \, dy \, d\xi:=(\mu(x-y) \, dx \, dy) \otimes d\xi,
$
etc.
\end{enumerate}
\end{remark}



Here is a version of the previous result with time-dependent functions $f$,
 as we will have to deal with such functions in the sequel.

\begin{lem}\label{cor:bf}
Assume \eqref{condition-on-mu}--\eqref{condition-symmetry} and let $f=f(t,x,\xi)$ and $\tilde{f}=\tilde{f}(t,x,\xi)$ be such that
$$
f,\nabla_x f,\nabla^2_x f \in C([0,\infty);L^1(\R^{d+1})) \quad \mbox{and} \quad
\tilde{f},\nabla_x \tilde{f} \in L^\infty(\R^+ \times \R^{d+1}).
$$Then $g_x[f] \in C([0,\infty);L^1(\R^{d+1}))$ and for almost any $t \geq 0$,
\begin{multline}
\label{eq:IBPformulawithtime} \int_{\R^{d+1}} \tilde{f} g_x[f] dx \, d\xi\\
 =\frac{1}{2} \int_{\R^{2d+1}} (f(t,x,\xi)-f(t,y,\xi)) (\tilde{f}(t,x,\xi)-\tilde{f}(t,y,\xi))  \mu(x-y) \, dx \, dy \, d\xi.
\end{multline}
\end{lem}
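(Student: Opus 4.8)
The plan is to deduce Lemma~\ref{cor:bf} from the already-established stationary bilinear identity in Lemma~\ref{lem:bf} by a density/approximation argument in time, treating $t$ essentially as a parameter. The starting observation is that for each fixed $t$ the functions $x \mapsto f(t,x,\xi)$ and $x \mapsto \tilde f(t,x,\xi)$ satisfy, for a.e. $\xi$, the hypotheses of Lemma~\ref{lem:bf} (one is $W^{2,1}$, hence in particular $W^{1,1}$, in $x$; the other is $W^{1,\infty}$ in $x$). Thus the pointwise-in-$(t,\xi)$ identity
\begin{equation*}
\int_{\R^{d}} \tilde f\, g_x[f]\, dx = \frac{1}{2}\int_{\R^{2d}} (f(t,x,\xi)-f(t,y,\xi))(\tilde f(t,x,\xi)-\tilde f(t,y,\xi))\,\mu(x-y)\,dx\,dy
\end{equation*}
holds. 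Integrating this in $\xi$ over $\R$ and invoking Fubini (justified by the $\sigma$-finiteness recalled in Remark~\ref{rem:pushforward}(2), together with the $L^1$-in-$(x,\xi)$ control of $f$ and the $L^\infty$ bound on $\tilde f$) yields the claimed identity \eqref{eq:IBPformulawithtime} for a.e. fixed $t$.

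First I would verify the regularity statement $g_x[f]\in C([0,\infty);L^1(\R^{d+1}))$. By Lemma~\ref{lem:well-def-g}, $g$ is a bounded linear operator from $W^{2,1}(\R^d)$ into $L^1(\R^d)$; applying this slicewise in $\xi$ and using that $f,\nabla_x f,\nabla_x^2 f\in C([0,\infty);L^1(\R^{d+1}))$, one gets
\begin{equation*}
\|g_x[f](t,\cdot)-g_x[f](s,\cdot)\|_{L^1(\R^{d+1})} \leq C\,\|f(t,\cdot)-f(s,\cdot)\|_{W^{2,1}_x\, L^1_\xi},
\end{equation*}
so continuity in $t$ of the $W^{2,1}_x$-valued, $L^1_\xi$-integrated norm of $f$ transfers to $g_x[f]$. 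This gives the asserted continuity and in particular makes the left-hand side of \eqref{eq:IBPformulawithtime} a well-defined, continuous function of $t$.

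The core step is the combination of slicewise application and Fubini, so I would next carry out the measurability and integrability bookkeeping carefully. The integrand on the right of \eqref{eq:IBPformulawithtime} need not be signed, so to apply Fubini I would first bound it in absolute value: using $|\tilde f(t,x,\xi)-\tilde f(t,y,\xi)|\leq \min\{2\|\tilde f\|_\infty,\ \|\nabla_x\tilde f\|_\infty |x-y|\}$ one controls the small-jump part by the second-moment condition $\int(|z|^2\wedge 1)\mu<\infty$ in \eqref{condition-on-mu} and the large-jump part by the $L^1$ mass of $f$, exactly as in the proof of Lemma~\ref{lem:bf}; this gives an $L^1(\R^{2d+1})$ bound uniform in $t$ and legitimizes both the interchange of $\int_\xi$ with $\int_{\R^d}\tilde f\,g_x[f]\,dx$ and the reduction of the double-difference integral via the change of variables \eqref{change-variable}.

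\emph{The main obstacle} I expect is purely technical rather than conceptual: ensuring that the slicewise identity from Lemma~\ref{lem:bf} holds for \emph{a.e.} $\xi$ simultaneously (a joint-measurability issue in $(t,x,\xi)$), and that the principal value defining $g_x[f]$ commutes with $\int_\xi$. Both are resolved by the absolute-integrability bound above, which dominates the truncated integrals $\int_{|z|>r}$ uniformly in $r$ and lets one pass to the limit $r\da 0$ under the $\xi$-integral by dominated convergence. No new structural input is needed beyond Lemma~\ref{lem:bf}, the operator bounds of Lemma~\ref{lem:well-def-g}, and the $\sigma$-finiteness making Fubini applicable; the time variable plays only the role of a parameter, with continuity handled separately as above.
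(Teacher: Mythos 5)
Your proposal is correct and takes essentially the same route as the paper, whose entire proof is the one-liner ``use Lemma~\ref{lem:well-def-g} to show that $g_x[f] \in C([0,\infty);L^1(\R^{d+1}))$ and Lemma~\ref{lem:bf} for the formula \eqref{eq:IBPformulawithtime}''. You simply make explicit the slicewise-in-$(t,\xi)$ application of Lemma~\ref{lem:bf} together with the Fubini and domination bookkeeping that the paper leaves implicit.
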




\subsection{Main properties of the nonlocal dissipation measure $n$
}

Let us end up with lemmas in relation with the  function defined in \eqref{def-nonloc-dissip}.  The first one is the rigorous justification of \eqref{key-taylor}.

\begin{lem}
Under the assumption \eqref{condition-on-A}, the formula \eqref{key-taylor} holds for all $S \in C^2(\R)$ and $a,b \in \R$ (with $\beta'=S' A'$).
\end{lem}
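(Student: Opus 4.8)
The plan is to verify the identity by a direct integration by parts in $\xi$, after using the monotonicity of $A$ to dispose of the absolute value. Since $\beta'=S'A'$ and $A$ is locally Lipschitz by \eqref{condition-on-A}, both $\beta$ and the map $\xi \mapsto A(b)-A(\xi)$ are absolutely continuous, so the fundamental theorem of calculus and the integration-by-parts formula for absolutely continuous functions are available. This is the only point where the limited regularity of $A$ must be handled with a little care, and it is routine.

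First I would rewrite the integral on the right-hand side of \eqref{key-taylor} using the sign of $A(b)-A(\xi)$ on the interval of integration. Because the normalization $\Char=\tfrac12$ at the two endpoints affects only a set of Lebesgue measure zero, I may replace $\Char_{\conv\{a,b\}}$ by the indicator of the open interval. On this interval the monotonicity of $A$ gives, in both orderings of $a$ and $b$, the identity $|A(b)-A(\xi)|\,\Char_{\conv\{a,b\}}(\xi)=\sgn(b-a)\,(A(b)-A(\xi))$: indeed, if $a<b$ then $\xi<b$ forces $A(\xi)\le A(b)$, while if $a>b$ then $\xi>b$ forces $A(\xi)\ge A(b)$. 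Adopting the oriented-integral convention $\int_a^b=-\int_b^a$ when $a>b$, this lets me write, uniformly in the two cases,
\[
\int_{\R} S''(\xi)\,|A(b)-A(\xi)|\,\Char_{\conv\{a,b\}}(\xi)\,d\xi=\int_a^b S''(\xi)\,(A(b)-A(\xi))\,d\xi,
\]
which is the crucial simplification.

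It then remains to integrate by parts, differentiating $S'$ and integrating $A(b)-A(\xi)$ (whose a.e. derivative in $\xi$ is $-A'(\xi)$). The boundary term at $\xi=b$ vanishes since $A(b)-A(b)=0$, while the one at $\xi=a$ produces $-S'(a)(A(b)-A(a))$, so
\[
\int_a^b S''(\xi)\,(A(b)-A(\xi))\,d\xi=-S'(a)\,(A(b)-A(a))+\int_a^b S'(\xi)A'(\xi)\,d\xi.
\]
Recognizing $\int_a^b S'(\xi)A'(\xi)\,d\xi=\int_a^b \beta'(\xi)\,d\xi=\beta(b)-\beta(a)$ and rearranging yields exactly \eqref{key-taylor}; the case $a=b$ is trivial, both sides being zero. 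I expect no genuine obstacle here: the content is a one-line Taylor/integration-by-parts computation, and the only thing to watch is that $A$ is merely Lipschitz rather than $C^1$, which is covered by the absolute-continuity version of the two elementary calculus facts used.
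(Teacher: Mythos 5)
Your proof is correct, but it takes a genuinely different route from the paper's. The paper sets $I:=\beta(b)-\beta(a)-S'(a)(A(b)-A(a))$ and reduces it to $\int_a^b S''(\xi)(A(b)-A(\xi))\,d\xi$ through a chain of Taylor expansions with integral remainder and a Fubini interchange; only at the end does it invoke the monotonicity of $A$ to recognize this oriented integral as the right-hand side of \eqref{key-taylor}, which is exactly the reduction you perform at the start. You then close the argument with a single integration by parts between the $C^1$ function $S'$ and the locally Lipschitz (hence absolutely continuous) function $\xi\mapsto A(b)-A(\xi)$, together with the fundamental theorem of calculus for $\beta$. (Minor slip of the pen: you announce that you differentiate $S'$ and integrate $A(b)-A(\xi)$, whereas your displayed formula does the opposite --- integrates $S''$ back to $S'$ and differentiates $A(b)-A(\cdot)$ --- but the formula and boundary terms are right.) Your route is not only shorter; it is more faithful to the stated hypothesis \eqref{condition-on-A}. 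The paper's computation manipulates $\beta''=S''A'+S'A''$, hence $A''$, which for $A\in W^{1,\infty}_{\rm loc}$ need not exist even as a measure, and it evaluates $A'$ at the specific point $a$, though $A'$ is only defined almost everywhere; a fully rigorous version of that argument requires smoothing $A$ and passing to the limit, a step the paper leaves implicit. Your argument differentiates $A$ only once and only almost everywhere, which is precisely what absolute continuity licenses, so no approximation is needed.
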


\begin{proof}
Setting $I:=\beta(b)-\beta(a)-S'(a) (A(b)-A(a))$, we have to prove that
\begin{equation}\label{main-claim}
I=\int_{\R} S''(\xi) |A(b)-A(\xi)| \Char_{\conv \{a,b\}}(\xi) \, d\xi.
\end{equation}
 This  
relies upon the following version of the Taylor's formula:
\begin{equation*}
\begin{split}
\beta(b)-\beta(a) & =\beta'(a) (b-a)+\int_a^b \beta''(\xi) (b-\xi) \, d\xi\\
& = S'(a) A'(a) (b-a)+\int_a^b \left(S''(\xi) A'(\xi)+S'(\xi) A''(\xi)\right) (b-\xi) \, d\xi.
\end{split}
\end{equation*}
We also have
$
A'(a) (b-a)=A(b)-A(a)-\int_a^b A''(\xi) (b-\xi) \, d\xi;
$
hence,
$$
I=\int_a^b S''(\xi) A'(\xi) (b-\xi) \, d\xi+\int_a^b (S'(\xi) - S'(a)) A''(\xi) (b-\xi) \, d\xi=:I_1+I_2.
$$Let us use again Taylor to rewrite the first term:
\begin{equation*}
\begin{split}
I_1 & =\int_a^b S''(\xi) \left(A(b)-A(\xi)-\int_\xi^b A''(\zeta) (b-\zeta) \, d\zeta \right) d\xi\\
 & = \int_a^b S''(\xi) (A(b)-A(\xi)) \, d\xi-\int_a^b  \int_\xi^b S''(\xi) A''(\zeta) (b-\zeta) \, d\zeta \, d\xi\\
& =:J_1-J_2.
\end{split}
\end{equation*}
By the monotonicity of $A(\cdot)$, we recognize that $J_1$ is the right-hand side of \eqref{main-claim}. It thus only remains to prove that
$
J_2=I_2.
$
But, we can rewrite $I_2$ as
$$
I_2=\int_a^b \int_a^\xi S''(\zeta) A''(\xi) (b-\xi) \, d\zeta \, d\xi
$$
and the fact that $I_2=J_2$ follows by the Fubini theorem.
\end{proof}

%

The result below will be crucial in the 
 proof of the uniqueness.

\begin{lem}\label{Lemma-F(a,b,c,d)-leq_G(a,b,c,d)}
Assume \eqref{condition-on-A}. For $a,b,c,d\in \R$, define
\begin{eqnarray*}
F(a,b,c,d) & := & \int_{\R} A'(\xi) \left(\chi(\xi;a)-\chi(\xi;b)\right) \left(\chi(\xi;c)-\chi(\xi;d)\right) d\xi,\\
G(a,b,c,d) & := & |A(b)-A(c)|\Char_{\conv\{a,b\}}(c)+|A(d)-A(a)|\Char_{\conv\{c, d\}}(a),
\end{eqnarray*}
having in mind 
\eqref{def-char}. Then
$$
\forall a,b,c,d\in\R\;\text{there holds}\; F(a,b,c,d)\leq G(a,b,c,d).
$$
\end{lem}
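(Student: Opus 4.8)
The plan is to prove the pointwise inequality $F(a,b,c,d)\leq G(a,b,c,d)$ by first computing $F$ explicitly as an integral over $\R_\xi$ and then bounding it using the structure of the $\chi$ function. Recall from \eqref{def-chi} that $\chi(\xi;a)-\chi(\xi;b)$ is supported on $\conv\{a,b\}$ and takes values in $\{-1,0,1\}$; more precisely, writing things out by cases on the signs and order of $a,b,c,d$, the product $\left(\chi(\xi;a)-\chi(\xi;b)\right)\left(\chi(\xi;c)-\chi(\xi;d)\right)$ vanishes outside $\conv\{a,b\}\cap\conv\{c,d\}$ and equals $\pm1$ there. Since $A'\geq0$ by \eqref{condition-on-A}, the integrand $A'(\xi)\left(\chi(\xi;a)-\chi(\xi;b)\right)\left(\chi(\xi;c)-\chi(\xi;d)\right)$ is controlled in absolute value by $A'(\xi)\Char_{\conv\{a,b\}}(\xi)\Char_{\conv\{c,d\}}(\xi)$, so a first crude bound gives
\begin{equation*}
F(a,b,c,d)\leq \int_{\conv\{a,b\}\cap\conv\{c,d\}} A'(\xi)\, d\xi.
\end{equation*}

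The next step is to relate this integral to the two terms appearing in $G$. The key observation is that $\int_{\conv\{a,b\}\cap\conv\{c,d\}} A'(\xi)\, d\xi = \big|A(\text{upper endpoint})-A(\text{lower endpoint})\big|$ of the intersection interval, by the monotonicity of $A$ and the fundamental theorem of calculus. I would then split the analysis according to whether $c$ lies inside $\conv\{a,b\}$ (so that $\Char_{\conv\{a,b\}}(c)\neq0$) and whether $a$ lies inside $\conv\{c,d\}$ (so that $\Char_{\conv\{c,d\}}(a)\neq0$). The geometric content is that the intersection interval $\conv\{a,b\}\cap\conv\{c,d\}$, when nonempty, has its endpoints among $\{a,b,c,d\}$, and in the generic case one endpoint is $c$ and matches the term $|A(b)-A(c)|\Char_{\conv\{a,b\}}(c)$ while the other matches $|A(d)-A(a)|\Char_{\conv\{c,d\}}(a)$. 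A careful but elementary case distinction on the relative orderings should show that $\int_{\conv\{a,b\}\cap\conv\{c,d\}} A'(\xi)\,d\xi$ is bounded above by the sum $G(a,b,c,d)$.

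The main obstacle I anticipate is twofold. First, the crude bound replacing the product of sign-differences by its absolute value may be lossy precisely in the configurations where the two $\chi$-differences have the same sign on the intersection (making $F$ positive and large) versus opposite signs (making $F$ negative, in which case the inequality is trivial since $G\geq0$); I must verify that the favorable sign is exactly the one that keeps $F\leq G$, which depends on how $0$ sits relative to the four points because $\chi$ is defined by cases involving the origin. Second, the bookkeeping at the endpoints—where $\Char$ takes the value $\frac12$ per \eqref{def-char}—requires care, but since $A'\in L^\infty_{\rm loc}$ and the integral ignores sets of measure zero, the endpoint values do not affect the integral $F$ and only enter through $G$, where they can only help (they contribute nonnegative half-weights). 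I would therefore organize the proof as a finite case analysis on the ordering of $a,b,c,d$ and the position of $0$, reducing each case to the monotonicity identity $\int_I A'=|A(\sup I)-A(\inf I)|$; the symmetry $F(a,b,c,d)=F(c,d,a,b)$ and $G(a,b,c,d)=G(c,d,a,b)$ cuts the number of distinct cases roughly in half.
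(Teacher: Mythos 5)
Your proposal is correct and takes essentially the same route as the paper's proof: since $\chi(\xi;a)-\chi(\xi;b)=\sgn(a-b)\Char_{\conv\{a,b\}}(\xi)$ for a.e.\ $\xi$, one has $F=\sgn(a-b)\sgn(c-d)\int_{\conv\{a,b\}\cap\conv\{c,d\}}A'(\xi)\,d\xi$, so the opposite-sign configurations are trivial ($F\leq 0\leq G$), your ``crude'' bound is actually an equality in the same-sign configurations, and the comparison with $G$ then proceeds exactly as you describe, via a case analysis on orderings (halved by the symmetry $(a,b)\leftrightarrow(c,d)$) together with the monotonicity of $A$. Two corrections to your anticipated obstacles: the position of $0$ never enters (the identity for $\chi(\xi;a)-\chi(\xi;b)$ above holds wherever $0$ sits, so no cases on the origin are needed), and the half-weights of \eqref{def-char} are not merely ``helpful'' but indispensable in the tie case $a=c$ --- e.g.\ for $a=c=0$, $b=d=1$, $A(\xi)=\xi$ one gets $F=1=G$ with $G$ consisting entirely of the two half-weighted terms --- so your case analysis must explicitly include the equality cases $a=c$ (in both orientations) and conclude there from $2A(\min\{b,d\})\leq A(b)+A(d)$, rather than dismissing ties as negligible.
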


\begin{proof}
Note first that $\chi(\xi;a)-\chi(\xi;b)=\sgn(a-b)\Char_{\conv\{a,b\}}(\xi)$  if $\xi$ is not an extremity of $\conv\{a,b\}$,  so that
\begin{equation}\label{formula-F}
F(a,b,c,d) = \int_{\R} A'(\xi) \sgn(a-b) \sgn(c-d)\mathbf{1}_{\conv\{a,b\} \cap \conv \{c,d\}}(\xi) \, d\xi.
\end{equation}
Let us now argue in several cases according as the way $a$, $b$, $c$ and $d$ are ordered. Note first that $F$ could be nonpositive, whereas $G$ is always nonnegative. We thus do not need to consider
the cases where $F\leq 0$, which, by \eqref{formula-F}, reduces our study to
\begin{enumerate}
\item[$\bullet$] either $a \leq b$ and $c \leq d$,
\item[$\bullet$] or $b \leq a$ and $d \leq c$.
\end{enumerate}
Note next the symmetry $F(a,b,c,d)=F(c,d,a,b)$ and the analogous symmetry for $G$.
We can thus 
also 
 assume without loss of generality that $a \leq c$ in every cases, that is to say: 
\begin{enumerate}
\item[$\bullet$] either $a \leq b$, $c \leq d$ and $a \leq c$,
\item[$\bullet$] or $b \leq a$, $d \leq c$ and 
 $a \leq c$. 
\end{enumerate}
Moreover, we can assume that $\conv\{a,b\} \cap \conv \{c,d\}$ is neither empty nor reduced to a singleton, because $F$ would equal zero by \eqref{formula-F} otherwise. This allows to precise again the preceding cases by
\begin{enumerate}
\item[$\bullet$] either $a \leq c < b$ and $c < d$,
\item[$\bullet$] or $b < a$ and $d < a \leq c$.
\end{enumerate}
Let us finally divide these cases into the four following ones:
\begin{enumerate}[{\rm 1.}]
\item  either $a < c < b$ and $c < d$,
\item  or $a = c < b$ and $c < d$,
\item  or $b < a$ and $d < a < c$.
\item  or $b < a$ and $d < a=c$.
\end{enumerate}
In both the first and the second cases, we have
$$
F(a,b,c,d)=\int_{\R} \mathbf{1}_{(a,b) \cap (c,d)=(c,\min \{b,d\})}(\xi) A'(\xi) \, d\xi=A(\min \{b,d\})-A(c).
$$As far as $G$ is concerned, we have
\begin{equation*}
G(a,b,c,d)=
\begin{cases}
A(b)-A(c) & \mbox{ in the first case},\\
\frac{1}{2} (A(b)-A(c=a)) + \frac{1}{2} (A(d)-A(a=c)) & \mbox{ in the second one,}
\end{cases}
\end{equation*}
 taking into account  the monotonicity of $A(\cdot)$ and  the specific definition of $\Char$ in \eqref{def-char}. In both cases, the monotonicity of $A(\cdot)$ implies that $F(a,b,c,d) \leq G(a,b,c,d)$. We argue similarly for the third and fourth cases, which completes the proof.
\end{proof}

For the accurate proof of the uniqueness, we will need to consider truncations of the preceding quadruplet, namely $T_R(a)$, $T_R(b)$, $T_R(c)$ and $T_R(d)$, where
\begin{equation}\label{def-truncature}
T_R(u) :=
\begin{cases}
u & \mbox{if } -R\leq u \leq R,\\
\pm R & \mbox{if } \pm u>R,
\end{cases}
\end{equation}
for any given $R>0$. Here is the precise result that we will use.
\begin{lem}
Assume \eqref{condition-on-A} and $R>0$. For any reals $a$, $b$ and $\xi$, we have
\begin{equation}\label{monotonicity-bis}
\mathbf{1}_{(-R,R)}(\xi) \left(\chi(\xi;a)-\chi(\xi;b)\right)=\chi(\xi;T_R(a))-\chi(\xi;T_R(b))
\end{equation}
and
\begin{multline}\label{monotonicity}
|A(b)-A(T_R(\xi))| \Char_{\conv\{a,b\}}(T_R(\xi))\\ \geq |A(T_R(b))-A(T_R(\xi))| \Char_{\conv\{T_R(a),T_R(b)\}}(T_R(\xi))
\end{multline}
(with the representation \eqref{def-char}).
\end{lem}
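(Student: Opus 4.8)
The plan is to treat the two assertions separately, reducing each to a one-variable fact. For the identity \eqref{monotonicity-bis}, linearity in the pair $(a,b)$ shows it suffices to prove the single-variable identity $\mathbf{1}_{(-R,R)}(\xi)\,\chi(\xi;a)=\chi(\xi;T_R(a))$ and then to subtract the analogous identity in $b$. To establish this, I would use that, by \eqref{def-chi}, $\chi(\cdot\,;a)$ is supported on the open interval $\conv\{0,a\}$ where it takes the constant value $\sgn(a)$, with the endpoints carrying the value $0$. Multiplying by $\mathbf{1}_{(-R,R)}$ merely restricts the support to $\conv\{0,a\}\cap(-R,R)$, and a direct inspection of the cases $a>0$, $a<0$, $a=0$ (using $0\in[-R,R]$) shows that this set equals $\conv\{0,T_R(a)\}$ by the very definition \eqref{def-truncature}; since $\sgn(T_R(a))=\sgn(a)$, the two functions coincide pointwise, endpoints included.

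For the inequality \eqref{monotonicity}, I would set $\eta:=T_R(\xi)\in[-R,R]$ and control the two factors separately. First, the amplitude factor: I claim $T_R(b)$ always lies in the closed interval with endpoints $\eta$ and $b$, which is immediate from \eqref{def-truncature} upon inspecting $b\in[-R,R]$, $b>R$, $b<-R$ (recalling $\eta\in[-R,R]$). Since $A$ is nondecreasing by \eqref{condition-on-A}, $A(T_R(b))$ then lies between $A(\eta)$ and $A(b)$, so that $|A(b)-A(\eta)|\geq|A(T_R(b))-A(\eta)|\geq 0$. Next, the characteristic factor: as the intervals $\conv\{a,b\}$ and $\conv\{T_R(a),T_R(b)\}$ are symmetric in their arguments, I may assume $a\leq b$, hence $T_R(a)\leq T_R(b)$. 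Provided $T_R(a)\neq T_R(b)$, a short case check using $\eta\in[-R,R]$ shows that $\eta\in(T_R(a),T_R(b))$ forces $\eta\in(a,b)$ and that $\eta\in\{T_R(a),T_R(b)\}$ forces $\eta\in[a,b]$; with the normalization \eqref{def-char} this reads $\Char_{\conv\{a,b\}}(\eta)\geq\Char_{\conv\{T_R(a),T_R(b)\}}(\eta)$.

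Multiplying the two factor inequalities then yields \eqref{monotonicity}, all quantities being nonnegative. The only configuration in which the characteristic-factor domination breaks down is the degenerate case $T_R(a)=T_R(b)=\eta$, which can occur when $a$ and $b$ lie on the same side of $[-R,R]$: there $\Char_{\conv\{T_R(a),T_R(b)\}}(\eta)=\tfrac12$ while the left characteristic factor may vanish. This is precisely the delicate point and the main obstacle; it is nonetheless harmless, because in that case $A(\eta)=A(T_R(b))$, so the amplitude factor $|A(T_R(b))-A(\eta)|$ on the right-hand side equals zero and the inequality reduces to $0\leq\text{(nonnegative)}$. Performing this endpoint bookkeeping forced by the $\tfrac12$-normalization in \eqref{def-char}, rather than bounding the two factors independently, is the only subtle part of the argument.
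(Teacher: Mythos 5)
Your proof is correct and follows essentially the same route as the paper's: reduce \eqref{monotonicity-bis} to the single-variable identity $\mathbf{1}_{(-R,R)}(\xi)\chi(\xi;u)=\chi(\xi;T_R(u))$, then prove \eqref{monotonicity} by combining the amplitude inequality $|A(b)-A(T_R(\xi))|\geq|A(T_R(b))-A(T_R(\xi))|$ (from the monotonicity of $A$) with the domination of the characteristic factors in the generic case, and dispose of the degenerate case (both $a$ and $b$ truncated to the same value $\pm R$) by observing that the right-hand side vanishes. Your case split ($T_R(a)\neq T_R(b)$ versus $T_R(a)=T_R(b)$) is just a rephrasing of the paper's dichotomy of whether $\conv\{T_R(a),T_R(b)\}\subseteq\conv\{a,b\}$ fails, which occurs exactly when $a$ and $b$ lie on the same side outside $[-R,R]$.
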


\begin{proof}
The identity \eqref{monotonicity-bis} is immediate since $\mathbf{1}_{(-R,R)}(\xi) \chi(\xi;u)=\chi(\xi;T_R(u))$ for any reals $u$ and $\xi$. Let us now prove \eqref{monotonicity}. To do so, note that for any reals $b$ and $\xi$,
\begin{equation}\label{claim-monotonicity}
|A(b)-A(T_R(\xi))| \geq |A(T_R(b))-A(T_R(\xi))|;
\end{equation}
indeed, the monotonicity of $A(\cdot)$ implies that
\begin{equation*}
\begin{split}
& |A(T_R(b))-A(T_R(\xi))|\\
& =
\begin{cases}
A(R)-A(T_R(\xi))\leq A(b)-A(T_R(\xi)) & \mbox{if } b>R,\\
|A(b)-A(T_R(\xi))|& \mbox{if } -R \leq b \leq R,\\
A(T_R(\xi))-A(-R)\leq A(T_R(\xi))-A(b) & \mbox{if } b<-R,
\end{cases}
\end{split}
\end{equation*}
so that \eqref{claim-monotonicity} always holds. With \eqref{claim-monotonicity} in hands, the proof of \eqref{monotonicity} is obvious in the case where
$
\conv \{T_R(a),T_R(b) \} \subseteq \conv \{a,b\};
$
indeed, we then have
$$
\Char_{\conv\{a,b\}}(T_R(\xi)) \geq \Char_{\conv\{T_R(a),T_R(b)\}}(T_R(\xi))
$$(even if $T_R(\xi)=a$ or $b$, in which case both these functions take the value $\frac{1}{2}$).
When that inclusion fails, $a$ and $b$ are necessarily either both greater than $R$ or both lower than $-R$. The right-hand side of \eqref{monotonicity} thus equals zero everywhere, which completes the proof.
\end{proof}

\section{Equivalence between entropy and kinetic solutions}\label{sec:equiv}

This section is devoted to the proof of Theorem \ref{thm:equivalence}. Let us first justify the reformulation of the notion of entropy solutions 
 in terms of the nonlocal dissipation measure~\eqref{nei-fdpe}.

\begin{proof}[Proof of Theorem \ref{key-theo}]
Let us assume that $u \in L^\infty(\R^+;L^1(\R^d)) \cap L^\infty(\R^+ \cap \R^d)$ is an entropy solution in the sense of Definition \ref{def:entropy}, thus satisfying the inequalities \eqref{rei-fdpe}. Let $I_r$ denote the nonlocal term in $|z| > r$ of \eqref{rei-fdpe}. Applying the identity \eqref{key-taylor}, we have
\begin{equation*}
\begin{split}
I_r & =\int_{0}^{\infty} \int_{\R^d} \int_{|z| > r}  (\beta(u(t,x+z))-\beta(u(t,x))) \varphi(t,x) \mu(z)  \, dt \, dx \, dz\\
& \quad -\int_{0}^{\infty} \int_{\R^d} \int_{|z| > r} \int_{\R} S''(\xi) |A(u(t,x+z))-A(\xi)| \Char_{\conv \{u(t,x),u(t,x+z)\}}(\xi) \\
& \quad \cdot  \varphi(t,x) \mu(z)  \, dt \, dx \, dz \, d\xi\\
& =: J_r-K_r.
\end{split}
\end{equation*}
We can integrate by parts, as in the proof of Lemma \ref{lem:well-def-g}  in Appendix \ref{appendix:tech},  to rewrite
\begin{equation}\label{initial-form}
J_r=\int_{0}^{\infty} \int_{\R^d} \int_{|z| > r}  \beta(u(t,x))) (\varphi(t,x+z)-\varphi(t,x)) \mu(z)  \, dt \, dx \, dz.
\end{equation}
We then obtain \eqref{rnei-fdpe} by passing to the limit in \eqref{rei-fdpe} as $r \da 0$, thanks  to the monotone convergence theorem giving us that 
\begin{multline*}
{\lim_{r \da 0} K_r}
= \int_{0}^{\infty} \int_{\R^{d}} \int_{\R} S''(\xi)  \varphi(t,x)\\
\cdot  \underbrace{\int_{\R^{d}} |A(u(t,x+z))-A(\xi)| \Char_{\conv \{u(t,x),u(t,x+z)\}}(\xi) \mu(z)  \, dz}_{=n(t,x,\xi)} \, dt \, dx \, d\xi.
\end{multline*}

Conversely, if we now assume that \eqref{rnei-fdpe} holds, then we cut all the nonlocal terms according as $|z| >r$ or not. Putting all the $|z|>r$ parts at the left-hand side, we get that
\begin{equation*}
\begin{split}
& \mbox{\it first order $+$ initial terms} \\
& + \int_{0}^{\infty} \int_{\R^d} \int_{|z| > r}  (\beta(u(t,x+z))-\beta(u(t,x))) \varphi(t,x) \mu(z)  \, dt \, dx \, dz \\
& - \int_{0}^{\infty} \int_{\R^d} \int_{|z| > r} \int_{\R} S''(\xi) |A(u(t,x+z))-A(\xi)| \Char_{\conv \{u(t,x),u(t,x+z)\}}(\xi) \\
& \quad \cdot  \varphi(t,x) \mu(z)  \, dt \, dx \, dz \, d\xi\\
& +P.V. \int_{0}^{\infty} \int_{\R^d} \int_{|z| \leq r}  \beta(u(t,x))) (\varphi(t,x+z)-\varphi(t,x)) \mu(z)  \, dt \, dx \, dz\\
& \geq \int_{0}^{\infty} \int_{\R^d} \int_{|z| \leq r} \int_{\R} S''(\xi) |A(u(t,x+z))-A(\xi)| \Char_{\conv \{u(t,x),u(t,x+z)\}}(\xi) \\
& \quad \cdot  \varphi(t,x) \mu(z)  \, dt \, dx \, dz \, d\xi,
\end{split}
\end{equation*}
where we have done the reverse integration by parts than in \eqref{initial-form} to rewrite $J_r$ in its initial form.
The left-hand side is thus the same than in \eqref{rei-fdpe}, again by \eqref{key-taylor}. Since moreover the right-hand side is nonnegative (the test $\varphi$ being nonnegative in our considerations), we already have \eqref{rei-fdpe} and the proof is complete.
\end{proof}

With Theorem \ref{key-theo} at hand, we can establish the equivalence between entropy and kinetic solutions by following standard arguments from \cite{LiPeTa91,LiPeTa94,ChPe03,Per09}. 
Let us give details for completeness. 
 We  will use Proposition \ref{continuity-in-time}, whose proof is postponed to Appendix \ref{appendix:prop}. We will also use the two following lemmas.

\begin{lem}\label{lem-support}
Let $u_0 \in L^1(\R^d)$, assume that \eqref{condition-on-F}--\eqref{condition-symmetry} hold and suppose that the function  $u \in L^\infty(\R;L^1(\R^d)) \cap L^\infty(\R^+ \times \R^d)$
is a kinetic solution of \eqref{cauchy-problem}. The associated measures then satisfy ${\rm supp} (m+n) \subseteq \left\{\essinf u \leq \xi \leq \esssup u\right\}$.
\end{lem}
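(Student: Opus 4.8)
The plan is to exploit the fact that, outside the range $[\essinf u,\esssup u]$, the kinetic function $\chi(\xi;u(t,x))$ does not depend on $(t,x)$, so that the transport and nonlocal terms in the kinetic equation disappear and only the $\partial_\xi(m+n)$ contribution survives; the condition at infinity \eqref{cond-infinity} then forces $m+n$ to vanish there. Write $M_-:=\essinf u$ and $M:=\esssup u$, both finite since $u\in L^\infty(\R^+\times\R^d)$. First I would record the elementary computation from the definition \eqref{def-chi}: for a.e. $(t,x)$ one has $M_-\le u(t,x)\le M$, hence for every fixed $\xi>M$ the value $\chi(\xi;u(t,x))$ equals $0$ when $\xi\ge 0$ and $-1$ when $\xi<0$ (the latter occurring only if $M<0$); symmetrically, for $\xi<M_-$ it equals $1$ when $0<\xi<M_-$ and $0$ otherwise. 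In either region the resulting function $\xi\mapsto\chi_0(\xi)$ is independent of $(t,x)$ for a.e.\ $(t,x)$.

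Next I would insert this into the interior kinetic equation provided by Proposition~\ref{continuity-in-time},
\[
\partial_t\chi(\xi;u)+F'(\xi)\cdot\nabla_x\chi(\xi;u)+A'(\xi)\,g_x[\chi(\xi;u)]=\partial_\xi(m+n)\quad\text{in }\mathcal{D}'((0,\infty)\times\R^d\times\R_\xi),
\]
tested against $\varphi\in\mathcal{D}((0,\infty)\times\R^d\times(M,\infty))$, and symmetrically against $\varphi$ supported in $\{\xi<M_-\}$. Since $\chi=\chi_0(\xi)$ is constant in $(t,x)$ on the support, the time-derivative and spatial-gradient contributions vanish after integrating the exact derivatives $\partial_t\varphi$ and $\nabla_x\varphi$, while the nonlocal term reduces to $\int\chi_0(\xi)A'(\xi)\big(\int_{\R^d}g_x[\varphi]\,dx\big)\,dt\,d\xi=0$, because $\int_{\R^d}g[\psi]\,dx=0$ for every $\psi\in\mathcal{D}(\R^d)$ (apply \eqref{eq:IBPformula} with $f=\psi$ and the constant test function $1$). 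Thus $\partial_\xi(m+n)=0$ in $\mathcal{D}'$ on each of the two open regions $\{\xi>M\}$ and $\{\xi<M_-\}$.

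Then I would integrate out $(t,x)$: choosing $\varphi=\psi(t,x)\phi(\xi)$ and letting $\psi\uparrow 1$, monotone convergence for the nonnegative measure $m+n$ turns the previous identity into $\int_{\R}h(\xi)\,\phi'(\xi)\,d\xi=0$ for all $\phi\in\mathcal{D}((M,\infty))$, where $h(\xi):=\int_0^\infty\int_{\R^d}(m+n)(t,x,\xi)\,dt\,dx$. Hence the nonnegative measure $h$ is constant (a multiple of Lebesgue measure) on $(M,\infty)$; but \eqref{cond-infinity}, namely $h\le\nu$ with $\nu\in L^\infty_0(\R_\xi)$ vanishing at infinity, forces that constant to be $0$. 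Since $m+n\ge 0$ this yields $m+n=0$ on $\{\xi>M\}$, and the symmetric argument gives $m+n=0$ on $\{\xi<M_-\}$, so that $\operatorname{supp}(m+n)\subseteq\{\essinf u\le\xi\le\esssup u\}$.

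The main obstacle I expect lies in this last paragraph, in passing rigorously from ``$\partial_\xi(m+n)=0$ distributionally on a half-line'' to ``$m+n\equiv 0$ there''. The delicate points are (i) justifying the $(t,x)$-integration against a cutoff $\psi\uparrow 1$ together with the monotone passage to the limit for the nonnegative measure $m+n$, and (ii) correctly invoking the vanishing-at-infinity bound \eqref{cond-infinity} (equivalently \eqref{nu-weak-sense}) to exclude a nonzero constant density. By contrast, the vanishing of the transport and nonlocal terms is routine once the $(t,x)$-independence of $\chi$ outside $[M_-,M]$ and the identity $\int_{\R^d}g_x[\varphi]\,dx=0$ are established.
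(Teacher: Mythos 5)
Your argument on the region $\{t>0\}$ is sound and is essentially the paper's own proof written out in detail: the constancy of $\chi(\xi;u(t,x))$ in $(t,x)$ outside $[\essinf u,\esssup u]$ kills the transport terms, the identity $\int_{\R^d}g[\psi]\,dx=0$ (correctly derived from \eqref{eq:IBPformula} with $\varphi\equiv 1$) kills the nonlocal term, so $\partial_\xi(m+n)=0$ there, and the decay \eqref{cond-infinity} then forces the constant-in-$\xi$ marginal to vanish. (The paper asserts the stronger fact $\chi(\xi;u)=0$ outside the range, which implicitly uses $\essinf u\le 0\le\esssup u$, itself a consequence of $u(t,\cdot)\in L^1(\R^d)$; your version via constancy sidesteps even this point, which is fine.)

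There is, however, a genuine gap: you never control the slice $\{t=0\}$. In Definition \ref{defi-kinetic} the measure $m$ belongs to $M^1_{\rm loc}([0,\infty)\times\R^{d+1})$, so a priori it may charge $\{t=0\}\times\R^{d+1}$; and the kinetic equation in \eqref{kds}, which is all you use, holds only in $\mathcal{D}'((0,\infty)\times\R^d\times\R_\xi)$ and says nothing about that slice. Your cutoffs $\psi\in\mathcal{D}((0,\infty)\times\R^d)$, even as $\psi\uparrow 1$, exhaust only $\{t>0\}$, so what you actually prove is $m+n=0$ on $(0,\infty)\times\R^d\times\left(\R\setminus[\essinf u,\esssup u]\right)$: a measure with an atom at some point $(0,x_0,\xi_0)$ with $\xi_0>\esssup u$ would pass every step of your argument and still violate the lemma. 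This is precisely why the paper's proof invokes, besides \eqref{cond-infinity}, ``the last line of \eqref{kds}'': the condition $\lim_{t\downarrow 0}\int_0^t\int_{\R^d}\int_{-R}^R(m+n)\,ds\,dx\,d\xi=0$ for every $R>0$ forces $(m+n)\left(\{t=0\}\times\R^d\times(-R,R)\right)=0$, i.e.\ $m+n$ does not charge the initial slice at all, and only then does the support inclusion follow in full. Since you already work with Proposition \ref{continuity-in-time}, adding this one observation closes the gap.
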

\begin{proof}
   If   $\xi \notin [\essinf u,\esssup u]$, $\chi(\xi;u)=0$  by \eqref{def-chi}
and thus
$
 \partial_\xi (m+n) =0
$
in $\mathcal{D}'((0,\infty) \times \R^d \times (\R_\xi \setminus [\essinf u,\esssup u]))$ by the first line of \eqref{kds}.
The result follows from \eqref{cond-infinity} and the last line of \eqref{kds}.
\end{proof}

\begin{lem}\label{lem-absolutely-continuous}
Let \eqref{condition-on-F}--\eqref{condition-symmetry} hold and consider
an
entropy solution $u$
of \eqref{cauchy-problem}  with initial data $u_0 \in L^1 \cap L^\infty(\R^d)$.  Then the associated nonlocal dissipation measure $n$ belongs to the space $L^1(\R^+ \times \R^{d+1})$.
\end{lem}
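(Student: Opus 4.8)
The plan is to test the reformulated entropy inequality \eqref{rnei-fdpe} of Theorem~\ref{key-theo} against the quadratic entropy, exactly as announced in Remark~\ref{absolutely-continuous}. Concretely, I would take the entropy-entropy flux triple generated by $S(\xi)=\xi^2/2$, normalized by $\eta(0)=\beta(0)=0$, so that $S''\equiv 1$, $\eta'(\xi)=\xi F'(\xi)$ and $\beta'(\xi)=\xi A'(\xi)$. Since the entropy solution $u\in L^\infty(\R^+;L^1(\R^d))\cap L^\infty(\R^+\times\R^d)$ is bounded, say $|u|\leq K$, the maps $S(u)$, $\eta(u)$ and $\beta(u)$ are controlled by $C_K|u|$ and therefore lie in $L^\infty(\R^+;L^1(\R^d))$; likewise $S(u_0)=u_0^2/2\in L^1(\R^d)$ because $u_0\in L^1\cap L^\infty(\R^d)\subseteq L^2(\R^d)$. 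With this choice, the right-hand side of \eqref{rnei-fdpe} is precisely $\int\!\int\!\int n\,\varphi$, which is exactly the quantity to be bounded.

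Next I would insert the separated test function $\varphi(t,x)=\theta(t)\,\psi_R(x)$, where $\psi_R(x)=\psi(x/R)$ for a fixed $\psi\in\mathcal{D}(\R^d)$ with $0\leq\psi\leq 1$ and $\psi\equiv 1$ near the origin, and where $\theta\in\mathcal{D}(\R)$ is nonnegative with $\theta\equiv 1$ on $[0,T]$, $\theta'\leq 0$ on $[0,\infty)$ and $\theta(0)=1$. Letting $R\to\infty$, I expect the four terms on the left of \eqref{rnei-fdpe} to behave as follows. The time-derivative term $\int_0^\infty\!\int_{\R^d} \tfrac{u^2}{2}\,\theta'\psi_R$ converges to $\int_0^\infty\theta'(t)\big(\int_{\R^d}\tfrac{u^2}{2}\,dx\big)\,dt$ by dominated convergence, using $u^2\in L^\infty(\R^+;L^1)$. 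The flux term is $O(1/R)$, since $\nabla_x\varphi=R^{-1}\theta(t)(\nabla\psi)(x/R)$ and $\eta(u)\in L^\infty(\R^+;L^1)$, hence it vanishes. The initial term $\int_{\R^d} u_0^2\,\psi_R/2$ converges to $\tfrac12\|u_0\|_{L^2(\R^d)}^2$. For the right-hand side, as $n\geq 0$ and $\psi_R\to 1$ pointwise, Fatou's lemma gives $\liminf_{R}\int\!\int\!\int n\,\theta\psi_R\geq\int_0^\infty\theta(t)\big(\int_{\R^{d+1}}n\,dx\,d\xi\big)\,dt$.

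The one delicate term is the nonlocal one $\int_0^\infty\!\int_{\R^d}\beta(u)\,\theta\,g_x[\psi_R]$, and showing it vanishes is the main obstacle. I would prove that $g[\psi_R]\to 0$ pointwise while staying uniformly bounded, and then conclude by dominated convergence against $\beta(u)\in L^\infty(\R^+;L^1)$. Writing $g[\psi_R]$ through the L\'evy form \eqref{Levy-form} with cutoff at $|z|=1$ and splitting the integral: the part over $|z|\leq 1$ is bounded by $\tfrac12\|\nabla^2\psi\|_\infty R^{-2}\int_{|z|\leq 1}|z|^2\mu(z)\,dz$, which tends to $0$ uniformly in $x$ by \eqref{condition-on-mu}; the part over $|z|>1$ is bounded by $2\|\psi\|_\infty\int_{|z|>1}\mu(z)\,dz<\infty$ and, for each fixed $x$, tends to $0$ by dominated convergence since $\psi_R(x+z)-\psi_R(x)\to 0$. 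This yields simultaneously the uniform bound and the pointwise convergence $g[\psi_R]\to 0$, whence $\int_0^\infty\!\int_{\R^d}\beta(u)\,\theta\,g_x[\psi_R]\to 0$.

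Combining these limits in the inequality $\mathrm{LHS}_R\geq\mathrm{RHS}_R$ and using $\theta'\leq 0$, I would obtain
\[
\int_0^\infty\theta(t)\Big(\int_{\R^{d+1}}n(t,x,\xi)\,dx\,d\xi\Big)dt\;\leq\;\int_0^\infty\theta'(t)\Big(\int_{\R^d}\tfrac{u^2}{2}\,dx\Big)dt+\tfrac12\|u_0\|_{L^2(\R^d)}^2\;\leq\;\tfrac12\|u_0\|_{L^2(\R^d)}^2 .
\]
Since $\theta\geq\mathbf 1_{[0,T]}$ and $n\geq 0$, the left-hand side dominates $\int_0^T\!\int_{\R^{d+1}}n$, and letting $T\to\infty$ by monotone convergence gives $\|n\|_{L^1(\R^+\times\R^{d+1})}\leq\tfrac12\|u_0\|_{L^2(\R^d)}^2<\infty$, which proves the claim and is consistent with the a priori estimate recorded after Definition~\ref{defi-kinetic}.
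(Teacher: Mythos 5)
Your proof is correct and follows essentially the same route as the paper: quadratic entropy $S(\xi)=\xi^2/2$ in \eqref{rnei-fdpe}, a nonincreasing compactly supported time cutoff times a dilated spatial cutoff, Fatou's lemma for the dissipation term, and the sign of the time-derivative term to close the estimate $\|n\|_{L^1}\leq\tfrac12\|u_0\|_{L^2}^2$. The one step you single out as delicate---$g[\psi_R]\to 0$ pointwise with a uniform bound---is precisely Lemma \ref{Lemma-for-g_x[psi]-bis} of the paper's appendix, which the paper cites instead of re-proving, and your self-contained derivation of it via the L\'evy form \eqref{Levy-form} is correct.
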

\noindent{Note that} this is the rigorous justification of Remark \ref{absolutely-continuous}.

\begin{proof}
 Consider $S(u)=\frac{u^2}{2}$ which is integrable in $x$. Take associated nonlinearities vanishing at zero so that $\eta(u)$ and $\beta(u)$ are also integrable in $x$. Consider the test function $\varphi_k(t) \phi(x/M)$ in \eqref{rnei-fdpe} with $0 \leq \varphi_k \in \mathcal{D}([0,\infty))$ pointwise converging to $1$ as $k \to \infty$, such that $\varphi_k' \leq 0$, and with $0 \leq \phi \in \mathcal{D}(\R^d)$ such that $\phi(0)=1$. The limit $M \to \infty$ implies that
\begin{equation*}
\begin{split}
\int_{\R^d}
S(u_0(x)) \varphi_k(0) \, dx 
\geq \int_{0}^{\infty} \int_{\R^{d+1}} S''(\xi) n(t,x,\xi) \varphi_k(t) \, dt \, dx \, d\xi,
\end{split}
\end{equation*}
thanks to the fact that $S(u) \varphi_k'=u^2 \varphi_k'/2 \leq 0$, to Lemma \ref{Lemma-for-g_x[psi]-bis} in appendix and to Fatou's lemma. Since $S''\equiv 1$, the limit $k \to \infty$ completes the proof.

\end{proof}

We will finally need a classical density result recalled below.

\begin{lem}\label{lem-density}
Let $\varphi \in \mathcal{D}((0,\infty) \times \R^{d+1})$ be nonnegative. Then it can be approximated (for the topology of $\mathcal{D}$) by functions of the form
$$
(t,x,\xi) \mapsto \sum_{i=1}^{N} \varphi_i(t,x) \phi_i(\xi),
$$for some integer $N$ and nonnegative $\varphi_i \in \mathcal{D}((0,\infty) \times \R^d)$ and $\phi_i \in \mathcal{D}(\R_\xi)$.
\end{lem}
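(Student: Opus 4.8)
The plan is to reduce the approximation to two elementary operations, each of which manifestly preserves nonnegativity of the factors: a mollification acting only on the kinetic variable $\xi$, followed by a discretization of the resulting convolution integral into a Riemann sum. The whole difficulty of the statement lies in the requirement that \emph{both} families $\varphi_i$ and $\phi_i$ be nonnegative; a generic tensor decomposition (e.g.\ via Taylor expansion in $\xi$ or Fourier series) would produce factors of arbitrary sign. The point of the mollify-then-discretize scheme is that it produces exactly $\varphi_i(t,x)=h\,\varphi(t,x,\zeta_i)$ and $\phi_i(\xi)=\rho_\eps(\xi-\zeta_i)$, which are nonnegative precisely because $\varphi\geq0$ and the mollifier $\rho_\eps\geq0$.

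First I would fix a nonnegative mollifier $\rho_\eps\in\mathcal{D}(\R_\xi)$ (with $\int\rho_\eps=1$ and $\operatorname{supp}\rho_\eps$ shrinking to $\{0\}$) and set
$$
\varphi_\eps(t,x,\xi):=\int_{\R}\varphi(t,x,\zeta)\,\rho_\eps(\xi-\zeta)\,d\zeta,
$$
that is, convolution in $\xi$ only. Since $\varphi\in\mathcal{D}((0,\infty)\times\R^{d+1})$ has support in a fixed compact box $K_{t,x}\times[\xi_-,\xi_+]$, the function $\varphi_\eps$ is supported in $K_{t,x}\times[\xi_--\eps',\xi_++\eps']$, hence in a fixed compact subset of $(0,\infty)\times\R^{d+1}$ for $\eps$ small. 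Moreover $\partial^\alpha_{t,x}\partial_\xi^k\varphi_\eps=(\partial^\alpha_{t,x}\partial_\xi^k\varphi)*_\xi\rho_\eps$ converges uniformly to $\partial^\alpha_{t,x}\partial_\xi^k\varphi$ for every multi-index $\alpha$ and every $k$, so $\varphi_\eps\to\varphi$ in the topology of $\mathcal{D}$ as $\eps\da0$.

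Next, for $\eps$ fixed I would discretize the integral over the compact $\zeta$-range $[\xi_-,\xi_+]$: choosing nodes $\zeta_i$ with mesh $h$, set
$$
R_h(t,x,\xi):=\sum_i h\,\varphi(t,x,\zeta_i)\,\rho_\eps(\xi-\zeta_i).
$$
This is exactly the desired form $\sum_i\varphi_i(t,x)\phi_i(\xi)$ with $\varphi_i:=h\,\varphi(\cdot,\cdot,\zeta_i)\geq0$ in $\mathcal{D}((0,\infty)\times\R^d)$ and $\phi_i:=\rho_\eps(\cdot-\zeta_i)\geq0$ in $\mathcal{D}(\R_\xi)$. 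Any derivative $\partial^\alpha_{t,x}\partial_\xi^k R_h$ is the Riemann sum of the continuous, compactly supported integrand $(t,x,\xi,\zeta)\mapsto\partial^\alpha_{t,x}\varphi(t,x,\zeta)\,\partial_\xi^k\rho_\eps(\xi-\zeta)$, which is uniformly continuous; hence these Riemann sums converge uniformly in $(t,x,\xi)$ to $\partial^\alpha_{t,x}\partial_\xi^k\varphi_\eps$ as $h\to0$, while all supports stay inside one fixed compact set. Thus $R_h\to\varphi_\eps$ in $\mathcal{D}$.

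A standard diagonal choice of $\eps$ and then $h$ combines the two steps and yields the claimed approximation of $\varphi$ by nonnegative finite sums of tensor products. The only genuine obstacle is the sign constraint imposed simultaneously on the two factors, and it is dissolved by this scheme; the remaining convergence statements are the routine facts that $\xi$-mollification and Riemann summation of a smooth parameter integral both converge in the topology of $\mathcal{D}$.
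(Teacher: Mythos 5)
Your proof is correct and is essentially the paper's own argument: the paper's one-line proof mollifies $\varphi$ with a product kernel $\rho_\epsilon(t,x)\,\theta_\epsilon(\xi)$ and then discretizes the resulting convolution into a Riemann sum, which is exactly your mollify-then-discretize scheme, with nonnegativity of both tensor factors coming from the nonnegativity of $\varphi$ and of the kernel. The only difference is that you convolve in $\xi$ alone and use the slices $h\,\varphi(\cdot,\cdot,\zeta_i)$ directly as the nonnegative $(t,x)$-factors, a mild simplification of the same idea rather than a different route.
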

\noindent
The property can be obtained by mollifying  $\varphi=\varphi(t,x,\xi)$  with an approximate unit of the form  $\rho_\epsilon(t,x) \theta_\epsilon(\xi)$   and discretizing the convolutions.

\begin{proof}[Proof of Theorem \ref{thm:equivalence}] Assume first that $u \in L^\infty(\R^+;L^1(\R^{d})) \cap L^\infty(\R^+ \times \R^d)$ is a kinetic solution and let us show that it is an entropy solution. 
 Recall that $\chi(\xi;u)=0$   if   $\xi \notin [\essinf u,\esssup u]$ and note that $\chi(\xi;u_0)=0$ as well by the middle line of \eqref{kds}. By Lemma \ref{lem-support}, we can then choose test functions in \eqref{rkf} of the form
$$
(t,x,\xi) \mapsto \varphi(t,x) S'(\xi), 
$$with  $\varphi \in \mathcal{D}(\R_t \times \R^d)$ and $S \in C^\infty(\R_\xi)$ convex, up to modifying $S$ for large $|\xi|$.  Using in addition the identity \eqref{fundamental-theo}, we deduce that
\begin{equation*}
\begin{split}
& \int_{0}^{\infty} \int_{\R^{d}} \left(S(u) \partial_t\varphi+\eta(u) \cdot \nabla \varphi-\beta(u) g[\varphi]\right) dt \, dx \\
& - \int_{0}^{\infty} \int_{\R^{d}} \left(S(0) \partial_t\varphi+\eta(0) \cdot \nabla \varphi-\beta(0) g[\varphi]\right) dt \, dx\\
& +\int_{\R^{d+1}}
S(u_0(x)) \varphi(0,x) \, dx-\int_{\R^d} S(0) \varphi(0,x) \, dx\\
& = \int_{0}^{\infty} \int_{\R^{d+1}}  S''(\xi) (m+n)(t,x,\xi) \varphi(t,x) \, dt \, dx \, d\xi.
\end{split}
\end{equation*}
Noticing that the sum of the second and  fourth  integrals of the left-hand side is zero, we  obtain  \eqref{rnei-fdpe} for smooth entropies. But, it is clear that $S$ can be chosen merely $C^2$ by an approximation procedure.

Conversely, assume that $u$ is an entropy solution and let us show that it is a kinetic one. We use the 
reformulation \eqref{kds} of Proposition \ref{continuity-in-time}. Since we have $u \in C([0,\infty);L^1(\R^d))$ with $u(0,\cdot)=u_0(\cdot)$ by Theorem \ref{thm:wp-entropy}, we already know that
$$
\lim_{t \da 0} \|u(t,\cdot)-u_0(\cdot)\|_{L^1(\R^d)}=0.
$$Let us now construct  $m$ as the distribution 
\begin{multline}\label{def-m}
m(t,x,\xi):=\\
\int_{{-\infty}}^{\xi} \left(\partial_t \chi(\zeta;u) + F'(\zeta) \cdot \nabla_x \chi(\zeta;u)+A'(\zeta) g_x[\chi(\zeta;u)] \right) d\zeta-n(t,x,\xi).
\end{multline}
 The integral is well defined in $\mathcal{D}'((0,\infty) \times \R^{d+1})$ since $\chi(\xi;u)=0$ if $\xi<\essinf u$. The distribution $m$ is thus well defined since $n \in L^1(\R^+ \times \R^{d+1})$ by Lemma \ref{lem-absolutely-continuous}. It satisfies the equation in \eqref{kds} by construction  and it only remains to show the other conditions of Proposition \ref{continuity-in-time}. Let us first show that
%
\begin{equation}\label{claim-support}
{{\rm supp} (m) \subseteq \{(t,x,\xi):\essinf u \leq \xi \leq \esssup u\}.}
\end{equation}
Firstly, it is immediate from \eqref{def-nonloc-dissip} that 
$$
{\rm supp} (n) \subseteq \{(t,x,\xi):\essinf u \leq \xi \leq \esssup u\}.
$$Secondly, 
\eqref{def-chi} implies that  
for any locally Lipschitz $S(\cdot)$, 
$$
\int_{{-\infty}}^{\xi} S'(\zeta)\chi(\zeta;u) \, d\zeta 
= 
\begin{cases} 
S(u)-S(0) & \mbox{if $\xi >\esssup u$},\\
0 & \mbox{if $\xi<\essinf u$}.
\end{cases}
$$Taking these facts into account in \eqref{def-m} with $S$ as the identity, $F$ and $A$, 
we find that 
$$
m=
\begin{cases}
\partial_t u+\nabla_x F(u)+g_x[A(u)]-\underbrace{(\nabla_x F(0)+g_x[A(0)])}_{=0} & \mbox{if $\xi >\esssup u$},\\
0 & \mbox{if $\xi<\essinf u$.}
\end{cases}
$$But, the remaining term of the right-hand side equals zero thanks to the weak formulation of \eqref{cauchy-problem}, see \cite{CiJa11}. This completes the proof of \eqref{claim-support}.

The test functions of Equation \eqref{def-m} can thus  also  be taken of the form
$$
(t,x,\xi) \mapsto \varphi(t,x) S''(\xi),
$$for any  $0 \leq \varphi \in \mathcal{D}((0,\infty) \times \R^d)$ and $S \in C^\infty(\R_\xi)$ convex, up to modifying $S$ for large $|\xi|$. This gives us that
\begin{equation}
\label{equivalence-last}
\begin{split}
& \langle m, \varphi S''  \rangle_{\mathcal{D}',\mathcal{D}} \\
& = \int_{0}^{\infty} \int_{\R^{d+1}} \chi(\xi;u) \left(S'(\xi) \partial_t\varphi+(S' F')(\xi) \cdot \nabla \varphi -(S'A')(\xi) g[\varphi]\right) dt \, dx \, d\xi \\
& \quad - \int_{0}^{\infty} \int_{\R^{d+1}} S''(\xi) n(t,x,\xi) \varphi(t,x)  \, dt \, dx \, d\xi,
\end{split}
\end{equation}
where we recognize the terms in \eqref{rnei-fdpe}  again by \eqref{fundamental-theo}.  Hence
$$
\langle m,  \varphi S'' \rangle_{\mathcal{D}',\mathcal{D}} \geq 0,
$$for such $\varphi=\varphi(t,x)$ and $S=S(\xi)$, which implies that $m$ is a nonnegative Radon measure on $(0,\infty) \times \R^{d+1}$ by the density claim of Lemma \ref{lem-density}. To conclude, we need the result  below. 

\begin{lem}\label{lem:cond-infinity}
The measure $m$ thus constructed on $(0,\infty) \times \R^{d+1}$ satisfies, for almost every $\xi \in \R$,
$$
\int_0^\infty \int_{\R^{d}} (m+n)(t,x,\xi) \, dt \, dx  \leq  \nu(\xi)
$$(in the sense of \eqref{nu-weak-sense}) where 
$$
\nu(\xi)= \|(u_0-\xi)^+ \mathbf{1}_{\xi > 0}\|_{L^1(\R^d)}+\|(u_0-\xi)^- \mathbf{1}_{\xi < 0}\|_{L^1(\R^d)}
$$is such that $\nu \in L^\infty_0(\R_\xi)$.
\end{lem}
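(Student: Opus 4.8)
The plan is to read off the bound by integrating, over $(t,x)\in(0,T)\times\R^d$, the very relation that defines $m+n$ in \eqref{def-m}, namely
\begin{equation*}
(m+n)(t,x,\xi)=\int_{-\infty}^{\xi}\Big(\partial_t \chi(\zeta;u)+F'(\zeta)\cdot\nabla_x \chi(\zeta;u)+A'(\zeta)\,g_x[\chi(\zeta;u)]\Big)\,d\zeta ,
\end{equation*}
and then to let $T\da\infty$. Concretely I would test the kinetic equation \eqref{rkf} with $\varphi(t,x,\xi)=\psi_T(t)\,\phi_M(x)\,\Theta(\xi)$, where $\phi_M(x)=\phi(x/M)$ with $\phi\in\mathcal{D}(\R^d)$, $\phi\equiv1$ near the origin; $\psi_T$ is a smooth nonincreasing cut-off with $\psi_T(0)=1$ and $\psi_T\equiv1$ on $[0,T]$; and $\Theta$ is a primitive of a fixed nonnegative $\theta\in\mathcal{D}(\R_\xi)$, truncated for large $|\xi|$. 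This truncation is harmless because $u\in L^\infty$ forces $\chi(\cdot;u)$, $\chi(\cdot;u_0)$ and, by Lemma~\ref{lem-support}, $m+n$ to be supported in the bounded interval $\{\essinf u\leq\xi\leq\esssup u\}$, so $\Theta'$ reduces to $\theta$ wherever anything relevant lives. The goal is to send $M\to\infty$ and retain only the time-boundary contributions.

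In that limit the flux term drops because $\nabla_x\phi_M=M^{-1}\nabla\phi(\cdot/M)\to0$ tested against the integrable $\chi(\xi;u)$, and the nonlocal term drops because $g_x[\varphi]=\psi_T\,\Theta\, g_x[\phi_M]$ with $g_x[\phi_M]\to0$; this is exactly the estimate already invoked in the proof of Lemma~\ref{lem-absolutely-continuous} (Lemma~\ref{Lemma-for-g_x[psi]-bis}), and it encodes $\int_{\R^d}g_x[f]\,dx=0$, itself a consequence of $g[1]=0$ and \eqref{eq:IBPformula}. What survives is the time derivative together with the initial datum, which after an integration by parts in $\xi$ and the elementary primitive formula read off from \eqref{def-chi},
\begin{equation*}
\int_{-\infty}^{\xi}\chi(\zeta;v)\,d\zeta=
\begin{cases}
v-(v-\xi)^+ & \text{if }\xi>0,\\
-(v-\xi)^- & \text{if }\xi<0,
\end{cases}
\end{equation*}
yields, for a.e. $\xi>0$ and every $T>0$, the identity
\begin{multline*}
\int_0^T \int_{\R^d}(m+n)(t,x,\xi)\,dt\,dx\\
=\int_{\R^d}\big(u(T,\cdot)-u_0\big)\,dx+\|(u_0-\xi)^+\|_{L^1(\R^d)}-\|(u(T,\cdot)-\xi)^+\|_{L^1(\R^d)}.
\end{multline*}
The first term vanishes by mass conservation $\int_{\R^d}u(T,\cdot)=\int_{\R^d}u_0$, a consequence of the weak formulation of \eqref{cauchy-problem} since both the flux and $g$ have zero spatial integral. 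As $(u(T,\cdot)-\xi)^+\geq0$, the right-hand side is $\leq\|(u_0-\xi)^+\|_{L^1}=\nu(\xi)$; the case $\xi<0$ is identical with $(\cdot)^-$ replacing $(\cdot)^+$.

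To conclude I would note that the left-hand side is nondecreasing in $T$ because $m+n\geq0$, so monotone convergence gives $\int_0^\infty\int_{\R^d}(m+n)\,dt\,dx\leq\nu(\xi)$ for a.e.\ $\xi$, which is precisely \eqref{cond-infinity} in the weak sense \eqref{nu-weak-sense}. That $\nu\in L^\infty_0(\R_\xi)$ is elementary: $\nu\leq\|u_0\|_{L^1(\R^d)}$, and $\nu(\xi)\to0$ as $|\xi|\to\infty$ by dominated convergence, dominating $(u_0-\xi)^\pm$ by $|u_0|$. The hard part will be the rigorous treatment of the nonlocal term in the spatial limit $M\to\infty$: unlike for the local Laplacian, $g_x[\phi_M]$ is not compactly supported, and its smallness must be extracted from the Lévy condition \eqref{condition-on-mu} (this is the role of Lemma~\ref{Lemma-for-g_x[psi]-bis}). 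A secondary technicality is to justify, through the choice of $\psi_T$ and the continuity $u\in C([0,\infty);L^1(\R^d))$ from Theorem~\ref{thm:wp-entropy}, that the time-derivative term converges to the stated boundary values at $t=0$ and $t=T$.
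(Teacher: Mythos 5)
Your core computation is right: integrating the kinetic equation in $\zeta$ over $(-\infty,\xi]$ (equivalently testing with a primitive of $\theta$), killing the space terms with $\phi_M$ and Lemma~\ref{Lemma-for-g_x[psi]-bis}, and keeping only the time-boundary terms is exactly the mechanism the paper uses, and your identity for $\int_0^T\int_{\R^d}(m+n)\,dt\,dx$ checks out. But there is a genuine structural gap as written: you test \eqref{rkf}, and \eqref{rkf} is not available at this stage. Lemma~\ref{lem:cond-infinity} sits inside the proof of the implication ``entropy $\Rightarrow$ kinetic'' of Theorem~\ref{thm:equivalence}; the validity of \eqref{rkf} (with the initial term $\int\chi(\xi;u_0)\varphi(0,\cdot)$ built in) is obtained only afterwards, via Proposition~\ref{continuity-in-time}, and the passage from \eqref{kds} to \eqref{rkf} needs the third line of \eqref{kds}, which itself is deduced from this very lemma. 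The same objection applies to your citation of Lemma~\ref{lem-support}, which is a statement about kinetic solutions; what is actually available is \eqref{claim-support}, proved just before. The repair is exactly what the paper does: run the argument on the distributional equation on $(0,\infty)\times\R^{d+1}$ (available by the construction \eqref{def-m}, in the tested form \eqref{equivalence-last}), over a time window $[t_0,T]$ with $t_0>0$, supply the boundary value at $t_0$ from $u\in C([0,\infty);L^1(\R^d))$ (Theorem~\ref{thm:wp-entropy}), and let $t_0\da 0$ at the very end. Your closing remark shows you sense this, but the proof must be restructured around it rather than around \eqref{rkf}.

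A second, smaller deviation: your choice of $\xi$-primitive produces the effective entropy $\min(v,\xi)=v-(v-\xi)^+$, whose linear part generates the term $\int_{\R^d}(u(T)-u_0)\,dx$, so you need mass conservation. That fact is true for these entropy solutions, but it is nowhere in the paper and requires its own $\phi_M$-argument (decompose $A(u)=A(0)+(A(u)-A(0))$, use $\int g[\phi_M]\,dx=0$ and Lemma~\ref{Lemma-for-g_x[psi]-bis}); as stated it is an unproven extra ingredient. It can be avoided entirely by integrating over $[\xi,\infty)$ for $\xi>0$ (and $(-\infty,\xi]$ for $\xi<0$), i.e.\ by using the entropies $(v-\xi)^+\mathbf{1}_{\xi>0}$ and $(v-\xi)^-\mathbf{1}_{\xi<0}$, which vanish at $v=0$: then the terminal term is simply $-\|(u(T,\cdot)-\xi)^\pm\|_{L^1}\le 0$ and no mass term ever appears. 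This normalization is precisely what the paper encodes in its choice $S(\xi)=\int\{(\xi-\zeta)^+\mathbf{1}_{\zeta>0}+(\xi-\zeta)^-\mathbf{1}_{\zeta<0}\}\psi(\zeta)\,d\zeta$, which also bypasses your a.e.-in-$\xi$ mollification by yielding the bound directly in the weak sense \eqref{nu-weak-sense}.
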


Let us admit it for a while and complete the proof of Theorem \ref{thm:equivalence}.  Extending $m$ 
on $[0,\infty) \times \R^{d+1}$ by $m(\{t=0\}):=0$, we obtain a Radon measure such that $\int_0^\infty \int_{\R^d} \int_{-R}^R (m+n) \, dt \, dx \, d\xi<\infty$, for any $R>0$. The  last condition of \eqref{kds} follows from the 
 dominated  convergence theorem. The proof is complete.
\end{proof}

Let us now prove the preceding lemma.
\begin{proof}[Proof of Lemma \ref{lem:cond-infinity}]
%
 Recall that before admitting Lemma \ref{lem:cond-infinity}, $u$ was an entropy solution of \eqref{cauchy-problem} and $0 \leq m \in M^1_{\rm loc}((0,\infty) \times \R^{d+1})$ was such that  \eqref{claim-support} and \eqref{equivalence-last} hold. For any $t_0 > 0$, $0 \leq \varphi \in \mathcal{D}((0,\infty) \times \R^d)$ and $S \in C^\infty(\R_\xi)$ convex, we thus have 
\begin{equation*}
\begin{split}
& \int_{t_0}^{\infty} \int_{\R^{d+1}}\chi(\xi;u) \left(F'(\xi) \cdot \nabla_x \varphi-A'(\xi) g_x[\varphi]\right) dt \, dx \, d\xi \\
& +\int_{t_0}^{\infty} \int_{\R^{d}} S(u) \partial_t \varphi \, dt \, dx+\int_{\R^{d+1}}
S(u(t_0,x)) \varphi(t_0,x) \, dx\\
& = \int_{t_0}^{\infty} \int_{\R^{d+1}} S''(\xi) (m+n)(t,x,\xi) \varphi(t,x) \, dt \, dx \, d\xi.
\end{split}
\end{equation*}
As previously, we choose $\varphi(t,x):=\varphi_k(t) \phi(x/M)$ with $\varphi_k$ nonincreasing on $[t_0,\infty)$, pointwise converging to $1$ as $k \to \infty$, $0 \leq \phi \in \mathcal{D}(\R^d)$, and $\phi(0)=1$. Then for any $S \geq 0$ with $S(0)=0$, the successive limits $M,k \to \infty$ imply that
$$
\int_{\R^{d}}
S(u(t_0,x)) \, dx \geq \int_{t_0}^{\infty} \int_{\R^{d+1}} S''(\xi) (m+n)(t,x,\xi) \, dt \, dx \, d\xi,
$$thanks again to the fact that $S(u) \varphi_k' \leq 0$, Lemma \ref{Lemma-for-g_x[psi]-bis}, and Fatou's lemma. 
Now considering any arbitrarily given $0 \leq {\psi} \in \mathcal{D}(\R)$, we can take
$$
S(\xi):=\int_\R  \left\{(\xi-\zeta)^+ \mathbf{1}_{\zeta > 0}+(\xi-\zeta)^- \mathbf{1}_{\zeta < 0}\right\}{\psi}(\zeta) \, d\zeta
$$because it is convex, zero at zero, and nonnegative. Hence
\begin{multline*}
\int_{t_0}^{\infty} \int_{\R^{d+1}}{\psi}(\xi) (m+n)(t,x,\xi) \, dt \, dx \, d\xi\\
\leq \int_{\R^{d+1}}
\left\{(u(t_0,x)-\zeta)^+ \mathbf{1}_{\zeta > 0}+(u(t_0,x)-\zeta)^- \mathbf{1}_{\zeta < 0}\right\}{\psi}(\zeta) \, dx \, d\zeta
\end{multline*}
which is the desired result for the initial time $t_0$.  We get the result as $t_0 \da 0$ by recalling that $u(t_0,\cdot) \to u_0(\cdot)$ in $L^1(\R^d)$, see Theorem \ref{thm:wp-entropy}. 
%
\end{proof}

\section{Uniqueness and $L^1$ contraction for kinetic solutions}\label{sec:uniq}

 This section and the next one are devoted to the proof of Theorem \ref{thm:wp-kinetic}. Here we focus on the $L^1$ contraction principle that we restate below    for the reader's convenience.

\begin{theo}\label{uniqueness-theorem}
Assume \eqref{condition-on-F}--\eqref{condition-symmetry} and let $u$ and $\tilde{u}$ be two kinetic solutions of ~\eqref{cauchy-problem} with respective initial data $u_0$ and $\tilde{u}_0$ belonging to $L^1(\R^d)$. Then,
\begin{equation}\label{contraction-principle}
\|u(t,\cdot)-\tilde{u}(t,\cdot)\|_{L^1(\R^d)}\leq \|u_0-\tilde{u}_0\|_{L^1(\R^d)} \quad \mbox{for  a.e.  $t \geq 0$}.
\end{equation}
\end{theo}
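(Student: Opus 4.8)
The plan is to follow the kinetic doubling-of-variables method of Chen and Perthame, adapted to the nonlocal operator $g$. Let $(u,m)$ and $(\tilde u,\tilde m)$ be the two kinetic solutions with initial data $u_0,\tilde u_0$, and let $\chi=\chi(\xi;u)$ and $\tilde\chi=\chi(\eta;\tilde u)$ solve the respective kinetic equations from the first line of \eqref{kds}, in the variables $(t,x,\xi)$ and $(t,x,\eta)$. By Lemma \ref{properties-of-khi}(ii), the quantity to control is
\begin{equation*}
\|u(t,\cdot)-\tilde u(t,\cdot)\|_{L^1(\R^d)}=\int_{\R^d}\int_{\R}|\chi(\xi;u(t,x))-\chi(\xi;\tilde u(t,x))|\,d\xi\,dx,
\end{equation*}
and since $\chi$ and $\tilde\chi$ take values in $\{-1,0,1\}$ with the same sign as $\xi$, this $L^1$ norm can be read off the diagonal $\xi=\eta$ of the product $\chi\tilde\chi$ via the elementary identity $|\chi-\tilde\chi|=|\chi|+|\tilde\chi|-2\,\chi\tilde\chi\,\mathbf{1}_{\xi=\eta}$-type bookkeeping. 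First I would therefore test the two kinetic equations against a common test function, multiply them crosswise, and integrate to obtain an evolution inequality for $\int\int \chi(\xi;u)\,\chi(\xi;\tilde u)\,d\xi\,dx$.

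The key computation is the cross term. Testing the equation for $\chi$ against $\tilde\chi$ and vice versa and adding, the transport parts $F'\cdot\nabla_x$ cancel after integration by parts (the symmetric structure in $x$), while the diffusion parts produce, using the bilinear-form identity of Lemma \ref{cor:bf} applied to $g_x$, a nonpositive boundary contribution; this is exactly where the self-adjointness \eqref{condition-symmetry} of $g$ and the integration-by-parts formula \eqref{eq:IBPformula} are used. The right-hand sides generate the dissipation terms $\partial_\xi(m+n)$ and $\partial_\eta(\tilde m+\tilde n)$. After integrating against the doubled test function and sending the $\xi$–$\eta$ regularization to the diagonal, the defect measures $m,\tilde m\ge 0$ contribute with the favorable sign and are discarded, whereas the nonlocal dissipation terms $n$ and $\tilde n$ must be confronted with the cross diffusion term coming from $g_x$. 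This confrontation is precisely controlled by Lemma \ref{Lemma-F(a,b,c,d)-leq_G(a,b,c,d)}: the pointwise inequality $F(a,b,c,d)\le G(a,b,c,d)$ with $a=u(t,x)$, $b=u(t,x+z)$, $c=\tilde u(t,x)$, $d=\tilde u(t,x+z)$ shows that the cross diffusion contribution is dominated by the sum of the two dissipation measures, so that the net effect has the correct sign.

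The main obstacle, and the heart of the proof, is the rigorous justification at the level of merely $L^1$ solutions, where $A(u)$ and $F(u)$ need not be locally integrable. First I would carry out the argument formally as above to expose the cancellations; then I would regularize by truncation, replacing $u$ by $T_R(u)$ and using the truncation identities \eqref{monotonicity-bis}–\eqref{monotonicity} to relate the truncated dissipation measures to the originals, together with the integrability control \eqref{cond-infinity}–\eqref{nu-weak-sense} furnished by $\nu\in L^\infty_0(\R_\xi)$ to pass to the limit $R\to\infty$. I expect the delicate points to be: (i) making sense of and integrating by parts the nonlocal term $g_x$ when acting on $\chi(\xi;u)$ for $L^1$ data, which requires approximating $\chi$ by the regular functions covered by Lemma \ref{cor:bf}; and (ii) justifying the diagonal limit $\xi\to\eta$ in the presence of the nonlocal coupling, where the measures $m+n$ are only known to be finite on slabs $\{|\xi|\le R\}$. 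Once these limits are controlled, choosing the test function to approximate $\mathbf{1}_{[0,t]}$ in time and $1$ in $x$, and invoking the initial-data conditions in \eqref{kds} (the essential continuity at $t=0$ and the vanishing of $\int\int\int_{-R}^R(m+n)$), yields the Gronwall-type conclusion
\begin{equation*}
\int_{\R^d}\int_{\R}|\chi(\xi;u(t,x))-\chi(\xi;\tilde u(t,x))|\,d\xi\,dx\le \int_{\R^d}\int_{\R}|\chi(\xi;u_0)-\chi(\xi;\tilde u_0)|\,d\xi\,dx,
\end{equation*}
which is \eqref{contraction-principle} after applying Lemma \ref{properties-of-khi}(ii) on both sides.
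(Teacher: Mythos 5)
Your formal computation is exactly the paper's: the microscopic contraction functional built from $|\chi|+|\tilde\chi|-2\chi\tilde\chi$, cancellation of the transport terms, the bilinear form of Lemma~\ref{cor:bf} for the cross diffusion term, discarding $m,\tilde m\geq 0$, and the confrontation of the cross term with $n+\tilde n$ via Lemma~\ref{Lemma-F(a,b,c,d)-leq_G(a,b,c,d)} with the same quadruple $(u(x),u(x+z),\tilde u(x),\tilde u(x+z))$, followed by truncation $T_R$ in $\xi$ and the initial-time conditions of \eqref{kds}. Up to that point you have reproduced the paper's formal proof.

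The gap is in the rigorous part, and it is not a minor technicality: it is the core of the paper's argument. The crux is the cross pairing $\langle\partial_\xi(m+n),\,\tilde\chi\rangle$: $m$ is merely a nonnegative measure on $(0,\infty)\times\R^{d+1}$ and $\tilde\chi(\cdot\,;\tilde u)$ is discontinuous in $\xi$ (and only $L^1\cap L^\infty$ in $(t,x)$), so neither the crosswise multiplication of the two distributional equations nor the ``diagonal limit'' $\eta\to\xi$ that you propose is defined as stated; after undoubling one must evaluate $m+n$ on the graph $\{\xi=\tilde u(t,x)\}$, which has no meaning for a general measure. Your plan flags this as delicate points (i)--(ii) but supplies no mechanism, whereas the paper's resolution is precisely Lemma~\ref{lem:pointwise-control}: convolve the whole kinetic equation in $(t,x)$ with $\rho_\epsilon$, so that the equation itself upgrades $m_\epsilon+n_\epsilon$ to a function that is Lipschitz in $\xi$ (item~\eqref{newii}, because $\partial_\xi(m_\epsilon+n_\epsilon)$ equals the left-hand side of the mollified equation, which is locally bounded), admits a bound by some $\nu_\epsilon\in C_0(\R_\xi)$ uniformly in $t$ (item~\eqref{cond-infinity-eps}, needed to kill the boundary terms at $\xi=\pm R$ when $R\to\infty$), and, crucially, dominates pointwise the mollified explicit dissipation (item~\eqref{pointwise-dissipation}, proved with an even $\xi$-kernel plus Fatou so as to recover the endpoint normalization $\tfrac12$ in \eqref{def-char}). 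It is this pointwise control that legitimizes evaluating $m_\epsilon+n_\epsilon$ at $\xi=0$ and $\xi=T_R(\tilde u(t+\tau,x+\theta))$ in \eqref{first-der-accurate-bis} and \eqref{detail}, i.e.\ exactly the step your ``diagonal limit'' would need; it is also why the paper can work with a single kinetic variable and explicitly avoids the $\xi$-regularization (your $\xi$--$\eta$ doubling) used in \cite{Per09}. Without this lemma, or an equivalent device exploiting the absolute continuity in $\xi$ of the nonlocal dissipation, your plan stalls at the sentence ``Once these limits are controlled''; note also that your assertion that the $g_x$-terms multiplied by $\sgn(\xi)$ give a ``nonpositive boundary contribution'' is imprecise --- by the bilinear form they vanish identically, and it is only the combination of the cross term with $n,\tilde n$ through $F\leq G$ that carries a sign.
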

%

As usually in the kinetic setting,  we first  give a formal proof 
 which  will be made rigorous later by a regularization procedure. 
 
We will follow the guidelines of \cite{ChPe03} without needing to regularize in $\xi$  similarly to what is done in   \cite{Per09}. This simplification will be also possible in our setting roughly speaking because the nonlocal dissipation measure is absolutely continuous. 

\subsection{A formal proof of uniqueness.} 
 During this formal proof of \eqref{contraction-principle},  $u(t,x)$  is  often shortly denoted by $u(x)$; this means that we  abusively  omit the time variable if there is no confusion. Moreover, $\chi(\xi;u(t,x))$  is  shortened to $\chi(\xi;u)$ (as we did many times already). Let now $m$, $n$, $\tilde{m}$ and $\tilde{n}$ be the measures associated to $u$ and $\tilde{u}$, respectively. Let us recall that $n$ and $\tilde{n}$ are given by \eqref{def-nonloc-dissip}, that is to say:
\begin{equation}
\label{measure-associated-to-two-solution-u-and-v}
\begin{split}
n(t,x,\xi) & =
 \int_{\R^d}|A(u(x+z))-A(\xi)|\Char_{\conv\{u(x),u(x+z)\}}
(\xi)\mu(z)\, dz,\\
\tilde{n}(t,x,\xi) & =
 \int_{\R^d}|A(\tilde{u}(x+z))-A(\xi)|\Char_{\conv\{\tilde{u}(x),\tilde{u}(x+z)\}}
(\xi)\mu(z)\, dz.
\end{split}
\end{equation}
As in \cite{ChPe03}, we introduce the following microscopic
contraction functional
\begin{equation}\label{contraction-function}
M(t,x,\xi):=\left(\chi(\xi;u(t,x))-\chi(\xi;\tilde{u}(t,x))\right)^2
\end{equation}
and consider its derivative with respect to time:
\begin{equation*}
\dot{{\mathcal
M}}(t):=\frac{d}{dt}\int_{\R^{d+1}}M(t,x,\xi)\, dx \, d\xi.
\end{equation*}
Having in mind the properties given in Lemma
\ref{properties-of-khi}, we see that on the one hand,
\begin{equation*}
\begin{split}
\dot{{\mathcal
M}}(t)&=\int_{\R^{d+1}} \partial_t  \left\{|\chi(\xi;u)|+|\chi(\xi;\tilde{u})|-2
\left[\chi(\xi;u)\chi(\xi;\tilde{u})\right]\right\} dx \, d\xi\\
&=\int_{\R^{d+1}}\sgn(\xi)\left(\partial_t \chi(\xi;u)+\partial_t \chi(\xi;\tilde{u})\right)
dx \, d\xi\\
& \quad -2\int_{\R^{d+1}}\left(\chi(\xi;u)\partial_t \chi(\xi;\tilde{u})+\chi(\xi;\tilde{u})\partial_t \chi(\xi;u)\right)
 dx \, d\xi
\end{split}
\end{equation*}
and on the other hand,
\begin{equation*}
\dot{{\mathcal
M}}(t)=\frac{d}{dt}\|u(t,\cdot)-\tilde{u}(t,\cdot)\|_{L^1(\R^{d})}.
\end{equation*}
So we reach to \eqref{contraction-principle} if we get
the property
$\dot{{\mathcal
M}}(t)\leq0.$

To do so, let us consider the equation of $\chi(\xi;u)$ written in \eqref{kds}. Multiplying it by $\sgn(\xi)$, we get
\begin{equation}
\label{Equation-in-khi(u)-multiply-by-sgn}
\partial_{t}|\chi(\xi;u)|+F'(\xi).\nabla_x|\chi(\xi;u)|+
A'(\xi) g_x[|\chi(\xi;u)|]
 =\sgn(\xi)\partial_\xi (m+n).
\end{equation}
In the same way, we have
\begin{equation}
\label{Equation-in-khi(v)-multiply-by-sgn}
\partial_{t}|\chi(\xi;\tilde{u})|+F'(\xi).\nabla_x|\chi(\xi;\tilde{u})|+
A'(\xi) g_x[|\chi(\xi;\tilde{u})|]
 =\sgn(\xi)\partial_\xi (\tilde{m}+\tilde{n}).
\end{equation}
Secondly, we multiply the equation of $\chi(\xi;u)$
by
$\chi(\xi;\tilde{u})$, and do similar computations for $\tilde{u}$, to get
\begin{multline}
\label{Equation-in-khi(u)-multiply-by-khi(v)}
\chi(\xi;\tilde{u})\partial_{t}\chi(\xi;u)+F'(\xi).\chi(\xi;\tilde{u})\nabla_x\chi(\xi;u)+
A'(\xi)\chi(\xi;\tilde{u})g_x[\chi(\xi;u)]\\
 =\chi(\xi;\tilde{u})\partial_\xi (m+n).
\end{multline}
and
\begin{multline}
\label{Equation-in-khi(v)-multiply-by-khi(u)}
\chi(\xi;u)\partial_{t}\chi(\xi;\tilde{u})+F'(\xi).\chi(\xi;u)\nabla_x\chi(\xi;\tilde{u})+
A'(\xi)\chi(\xi;u)g_x[\chi(\xi;\tilde{u})]\\
 =\chi(\xi;u)\partial_\xi (\tilde{m}+\tilde{n}).
\end{multline}
Now we add the equalities \eqref{Equation-in-khi(u)-multiply-by-sgn} and \eqref{Equation-in-khi(v)-multiply-by-sgn} from which we subtract twice the sum of those given in \eqref{Equation-in-khi(u)-multiply-by-khi(v)} and \eqref{Equation-in-khi(v)-multiply-by-khi(u)}. Then, after an integration over $\R^{d+1}$, we get
 \begin{equation}\label{tech-eq-uniq}
\begin{split}
\dot{{\mathcal
M}}(t)&= \int_{\R^{d+1}} (\sgn(\xi)-2\chi(\xi;\tilde{u})) \partial_\xi
(m+n) \, dx \, d\xi\\
& \quad +\int_{\R^{d+1}} (\sgn(\xi)-2\chi(\xi;u)) \partial_\xi(\tilde{m}+\tilde{n}) \, dx \, d\xi\\
& \quad {+} 2\int_{\R^{d+1}} A'(\xi) \left\{(\chi(\xi;\tilde{u}) g_x[\chi(\xi;u)]
+\chi(\xi;u) g_x[\chi(\xi;\tilde{u})] \right\} dx \, d\xi\\
& =:I_1(t)+I_2(t)+I_3(t).
\end{split}
\end{equation}Notice that we have 
omitted several terms because - at least formally - they are equal to zero, namely
\begin{equation*}
\int_{\R^{d+1}} F'(\xi) \cdot \nabla_x |\chi(\xi;u)| \, dx \, d\xi\;=\;0\;=\;\int_{\R^{d+1}} F'(\xi) \cdot \nabla_x |\chi(\xi;\tilde{u})| \, dx \, d\xi,
\end{equation*}
as well as
\begin{multline*}
\int_{\R^{d+1}} F'(\xi) \cdot \left\{\chi(\xi;\tilde{u}) \nabla_x \chi(\xi;u)+\chi(\xi;u) \nabla_x \chi(\xi;\tilde{u}) \right\} dx \, d\xi\\
\;=\;\int_{\R^{d+1}} F'(\xi) \cdot \nabla_x (\chi(\xi;u) \chi(\xi;\tilde{u})) \, dx \, d\xi \;=\;0
\end{multline*}
and
\begin{equation*}
\int_{\R^{d+1}} A'(\xi) g_x[|\chi(\xi;u)|] \, dx \, d\xi\;=\;0\;=\;\int_{\R^{d+1}} A'(\xi) g_x[|\chi(\xi;\tilde{u})|] \, dx \, d\xi.
\end{equation*}
 All these equalities stem from the use of the Fubini theorem and from the fact that, in a sense, the functions $\chi(\xi,u)$ and $\chi(\xi,\tilde u)$ vanish as $|\xi|\to \infty$ due to their integrability.
 To get the last equality, we have also (formally) used Lemma \ref{lem:bf}. 
 Now it remains to show that
$$
I_1(t)+I_2(t)+I_3(t) \leq 0
$$in \eqref{tech-eq-uniq}. For the first term, we use that
$$
\partial_{\xi} \sgn(\xi)=2 \delta(\xi) \quad \mbox{and} \quad \partial_\xi\chi(\xi;\tilde{u})=\delta(\xi)-\delta(\xi-\tilde{u}).
$$We use similar (formal) calculations for the second term and infer that
\begin{equation*}
\begin{split}
& I_1(t)+I_2(t)\\
& = -2\int_{\R^{d+1}} \left\{(\delta(\xi-\tilde{u}(x))(m+n)(t,x,\xi)+\delta(\xi-u(x))(\tilde{m}+\tilde{n})(t,x,\xi) \right\}
dx \, d\xi\\
& \leq -2 \int_{\R^{2d+1}} \delta(\xi-\tilde{u}(x))|A(u(x+z))-A(\xi)|\Char_{\conv\{u(x),u(x+z)\}}
(\xi) \mu(z) \, dx \, dz \, d\xi \\
& \quad -2 \int_{\R^{2d+1}}\delta(\xi-u(x))|A(\tilde{u}(x+z))-A(\xi)|\Char_{\conv\{\tilde{u}(x),
\tilde{u}(x+z)\}} (\xi) \mu(z)\, dx \, dz \, d\xi,
\end{split}
\end{equation*}
thanks to the nonnegativity of the measures $m$, $\tilde m$ and to explicit representations \eqref{measure-associated-to-two-solution-u-and-v} of the measures $n$, $\tilde n$. After the integration in $\xi$, we get
\begin{equation*}
\begin{split}
& I_1(t)+I_2(t)\\
& \leq -2 \int_{\R^{2d}} |A(u(y))-A(\tilde{u}(x))| \Char_{\tilde{u}(x) \in \conv\{u(x),u(y)\}} \mu(x-y) \, dx \, dy \\
& \quad -2 \int_{\R^{2d}} |A(\tilde{u}(y))-A(u(x))|\Char_{u(x) \in \conv\{\tilde{u}(x),
\tilde{u}(y)\}} \mu(x-y)\, dx \, dy,
\end{split}
\end{equation*}
where we have also 
(formally) changed the variables by $x+z \mapsto y$ and used the symmetry $\mu(y-x)=\mu(x-y)$ in \eqref{condition-symmetry}. We recognize the $G$-term of Lemma \ref{Lemma-F(a,b,c,d)-leq_G(a,b,c,d)} and thus infer that
\begin{equation*}
I_1(t)+I_2(t) \leq -2 \int_{\R^{2d}} G(u(x),u(y),\tilde{u}(x),\tilde{u}(y)) \mu(x-y) \, dx \, dy.
\end{equation*}
Further, we use Lemma \ref{lem:bf} to rewrite the last term $I_3(t)$ as follows:
\begin{multline*}
I_3(t) = 2 \int_{\R^{2d+1}} A'(\xi) \left\{\chi(\xi;u(x))-\chi(\xi;u(y))\right\}\\
\cdot
\left\{\chi(\xi;\tilde{u}(x))-\chi(\xi;\tilde{u}(y))\right\}
\mu(x-y) \, dx \, dy \, d\xi.
\end{multline*}
We recognize the $F$-term of Lemma \ref{Lemma-F(a,b,c,d)-leq_G(a,b,c,d)}. Hence
\begin{equation*}
I_3(t) = 2 \int_{\R^{2d}} F(u(x),u(y),\tilde{u}(x),\tilde{u}(y)) \mu(x-y) \, dx \, dy
\end{equation*}
and finally $\dot{{\mathcal M}}(t)= I_1(t)+I_2(t)+I_3(t) \leq 0$, since
$$
F(u(x),u(y),\tilde{u}(x),\tilde{u}(y))  \leq G(u(x),u(y),\tilde{u}(x),\tilde{u}(y)),
$$by Lemma~\ref{Lemma-F(a,b,c,d)-leq_G(a,b,c,d)}. This completes the formal proof of the $L^1$ contraction.

\subsection{Accurate uniqueness and $L^1$ contraction proof}
Let us now give the rigorous proof of Theorem \ref{uniqueness-theorem}. For brevity, we set
$
\chi=\chi(t,x,\xi)
  :=  
\chi(\xi;u(t,x))
$
and do similarly for $\tilde{u}$. Next, we follow the regularization approach
of \cite{Per09} thus considering  $\epsilon>0$ and some approximate unit 
\begin{equation*}
\rho_\epsilon(t,x):=\frac{1}{\epsilon}\rho_1\left(\frac{t}{\epsilon}\right)
\frac{1}{\epsilon^d} \rho_2\left(\frac{x}{\epsilon}\right),
\end{equation*} 
with kernels satisfying
\begin{equation*}
\begin{cases}
\text{$\rho_1 \in \mathcal{D}((-1,0))$, $\rho_2 \in \mathcal{D}(\R^d)$},\\
\text{$\rho_1,\rho_2 \geq 0$ and $\int_{\R} \rho_1=\int_{\R^d} \rho_2 =1$.}
\end{cases}
\end{equation*} 
Given $f \in L^1_{\rm loc}(\R^+ \times \R^{d+1})$ (or $M^1_{\rm loc}$), we denote its regularized version by 
$$
f_\epsilon(t,x,\xi):=(f \ast 
\rho_\epsilon 
)(t,x,\xi)=\int_0^\infty \int_{\R^d}  
\rho_\epsilon 
(t-s,x-\eta) f(s,\eta,\xi)\, ds \, d\eta.
$$This can also write
$$f_\epsilon(t,x,\xi)=\int_0^\infty \int_{\R^d}  
\rho_\epsilon 
(-s,-\eta) f(t+s,x+\eta,\xi)\, ds \, d\eta$$(with the pushforward measure, cf. Remark~\ref{rem:pushforward}).  Note that the symbol `$\ast$'  denotes   the convolution in $(x,t)$ without convoluting in $\xi$. 
We then define
 $$
M^{\epsilon}(t,x,\xi):=|\chi_{\epsilon}|+|\tilde{\chi}_{\epsilon}|-2\chi_{\epsilon} \tilde{\chi}_{\epsilon},
$$
\begin{equation}\label{preciseM}
\mathcal{M}^{\epsilon}(t):=\int_{\R^{d+1}}M^{\epsilon}(t,x,\xi)\,
dx \, d\xi \;=\; \mathcal{M}^{\epsilon}_1(t)+\mathcal{M}^{\epsilon}_2(t)+\mathcal{M}^{\epsilon}_3(t),
\end{equation}
where $\mathcal{M}^{\epsilon}_1$, $\mathcal{M}^{\epsilon}_2$ and $\mathcal{M}^{\epsilon}_3$ correspond to the contributions of the respective terms of $M^{\epsilon}(t,x,\xi)$ to the integral $\mathcal{M}^{\epsilon}(t)$.

We shall see that this is a regularized version of the microscopic contraction functional \eqref{contraction-function}.
Here is the main lemma that we will have to prove.

\begin{lem}\label{prop-convergence-of-M_epsilon_in-rigorous-proof}
Let the assumptions of Theorem \ref{uniqueness-theorem} hold and let $\epsilon>0$ be fixed. Then, we have $\mathcal{M}^\epsilon \in C^1([0,\infty))$ with $\dot{\mathcal{M}}^\epsilon(t) \leq 0$ for all $t \geq 0$.
\end{lem}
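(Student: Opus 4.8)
The plan is to make the formal computation of the preceding subsection rigorous at the level of the $(t,x)$-regularizations $\chi_\epsilon,\tilde\chi_\epsilon$. First I would convolve the kinetic equation in \eqref{kds} with $\rho_\epsilon$ in the variables $(t,x)$ only. Since $F'(\xi)$ and $A'(\xi)$ depend on $\xi$ alone and $g_x$ acts in $x$, all three commute with this convolution, so $\chi_\epsilon$ satisfies
\[
\partial_t\chi_\epsilon+F'(\xi)\cdot\nabla_x\chi_\epsilon+A'(\xi)g_x[\chi_\epsilon]=\partial_\xi(m+n)_\epsilon
\]
classically in $(t,x)$ and in $\mathcal{D}'$ in $\xi$, with the analogous identity for $\tilde\chi_\epsilon$; the choice $\rho_1\in\mathcal{D}((-1,0))$ makes these \emph{forward} averages well defined and smooth up to $t=0$. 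The structural fact that makes everything work is that convolving only in $(t,x)$ preserves the sign of $\chi$ in $\xi$: for fixed $\xi$ the function $\chi(\xi;\cdot)$ has the sign of $\xi$, hence so does $\chi_\epsilon$, so that $|\chi_\epsilon|=\sgn(\xi)\chi_\epsilon$ pointwise. Consequently $M^\epsilon=\sgn(\xi)(\chi_\epsilon+\tilde\chi_\epsilon)-2\chi_\epsilon\tilde\chi_\epsilon$ is smooth in $t$, and, using the bound \eqref{cond-infinity} to control $(m+n)_\epsilon$ and to dominate the difference quotients, one differentiates under the integral sign to conclude $\mathcal{M}^\epsilon\in C^1([0,\infty))$.

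Next I would compute $\dot{\mathcal{M}}^\epsilon$ by substituting the regularized equations and split the result exactly as in \eqref{tech-eq-uniq} into convection terms, the self-diffusion terms, the cross-diffusion term $I_3^\epsilon$, and the measure terms $I_1^\epsilon+I_2^\epsilon$. The convection terms now genuinely vanish because $\chi_\epsilon(\cdot,\xi)\in W^{1,1}(\R^d)$ for each $\xi$, so each is the integral of a pure $x$-divergence; the self-diffusion terms $\int A'(\xi)g_x[|\chi_\epsilon|]$ vanish by the self-adjointness of $g$ together with $g[\mathbf{1}]=0$ (Lemma~\ref{lem:well-def-g}), the needed $W^{2,1}_x$-regularity being supplied by the mollification. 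For the measure terms I integrate by parts in $\xi$; as in the formal proof the bare $\delta(\xi)$ contributions cancel and, dropping $m_\epsilon,\tilde m_\epsilon\geq0$, one is left with $I_1^\epsilon+I_2^\epsilon\leq-2\int[\delta(\xi-\tilde u)]_\epsilon\,n_\epsilon-2\int[\delta(\xi-u)]_\epsilon\,\tilde n_\epsilon$. Finally I would rewrite $I_3^\epsilon$ by the time-dependent bilinear form of Lemma~\ref{cor:bf}.

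The heart of the argument is to show that these two groups combine to a nonpositive quantity. Writing the convolutions explicitly, with averaging variables $(s',\eta')$ attached to $u$ and $(s'',\eta'')$ attached to $\tilde u$, and then performing the changes of variables $y=x+z$, the translation $x\mapsto x-\eta'$, and using the evenness of $\mu$, both $I_3^\epsilon$ and the surviving dissipation terms become integrals against the \emph{same} nonnegative kernel $\rho_\epsilon(-s',-\eta')\,\rho_\epsilon(-s'',-\eta'')\,\mu(x-y)$ of, respectively, the functions $F$ and $G$ of Lemma~\ref{Lemma-F(a,b,c,d)-leq_G(a,b,c,d)} evaluated at the \emph{same} quadruple $(u_{s'}(x),u_{s'}(y),\tilde u_{s''}(x+\zeta),\tilde u_{s''}(y+\zeta))$, where $\zeta=\eta''-\eta'$ and, after the shift, $u_{s'}(x)$ stands for $u(t+s',x)$ and $\tilde u_{s''}(x+\zeta)$ for $\tilde u(t+s'',x+\zeta)$. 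Applying the pointwise inequality $F\leq G$ under this common integral then gives $\dot{\mathcal{M}}^\epsilon=I_1^\epsilon+I_2^\epsilon+I_3^\epsilon\leq0$.

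I expect the main obstacle to be one of integrability rather than of sign. Because $u,\tilde u$ are only $L^1$, the weight $A'(\xi)$ and the increments $|A(\cdot)-A(\xi)|$ are unbounded in $\xi$, so the $\xi$-integrals defining $I_3^\epsilon$ and $n_\epsilon$, and the applications of Lemmas~\ref{cor:bf} and \ref{Lemma-F(a,b,c,d)-leq_G(a,b,c,d)}, are only conditionally meaningful. I would circumvent this by truncating at level $R$: replace $\chi$ by its cut-off through \eqref{monotonicity-bis}, bound the truncated dissipation from above by the genuine $n$ and $\tilde n$ via \eqref{monotonicity}, carry out the matching and the use of $F\leq G$ in the region $|\xi|\leq R$, where $A$ is Lipschitz and all integrals converge absolutely, and then let $R\to\infty$ by monotone (respectively dominated) convergence, the passage being controlled by \eqref{cond-infinity}. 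Justifying the differentiation under the integral and the bilinear identity in this merely integrable setting are the remaining technical points.
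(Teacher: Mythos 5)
Your proposal follows essentially the same route as the paper's proof: regularize in $(t,x)$ only, truncate in $\xi$ at level $R$ via \eqref{monotonicity-bis}--\eqref{monotonicity}, drop $m_\epsilon,\tilde m_\epsilon\geq 0$ and pair the remaining dissipation with the transported deltas, rewrite the cross-diffusion term with Lemma~\ref{cor:bf}, and conclude by applying $F\leq G$ (Lemma~\ref{Lemma-F(a,b,c,d)-leq_G(a,b,c,d)}) to the truncated quadruple under the common kernel $\rho_\epsilon\rho_\epsilon\,\mu(x-y)$, with the passage $R\to\infty$ controlled by \eqref{cond-infinity}. The one ingredient you leave informal --- giving pointwise meaning in $\xi$ to $(m_\epsilon+n_\epsilon)$ so that the pairing $\int[\delta(\xi-\tilde u)]_\epsilon\,n_\epsilon\,d\xi$ and the boundary terms at $\xi=\pm R$ make sense --- is precisely what the paper supplies in Lemma~\ref{lem:pointwise-control}, so your plan coincides with the paper's argument once that technical lemma is filled in.
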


Let us admit this result for a while and complete the proof of Theorem \ref{uniqueness-theorem}. For that, we argue as in \cite{Per09}.
For the sake of completeness, we provide details.

\begin{proof}[Proof of Theorem \ref{uniqueness-theorem}]
Let us recall that
\begin{equation}
\label{estimate-chi}
\chi \in L^\infty(\R^+;L^1(\R^{d+1})) \cap L^\infty(\R^+ \times \R^{d+1})
\end{equation}
by Remark \ref{isometry}.
 It is
  standard that $\chi_\epsilon \to \chi$ in $L^1_{\textup{loc}}([0,\infty);L^1(\R^{d+1}))$, as $\epsilon \da 0$,
while remaining bounded in $ L^\infty(\R^+;L^1(\R^{d+1})) \cap L^\infty(\R^+ \times \R^{d+1})$. Recalling then that
$$
M(t,x,\xi)=(\chi-\tilde{\chi})^2=|\chi|+|\tilde{\chi}|-2 \chi \tilde{\chi}
$$and
$$
\mathcal{M}(t)=\int_{\R^{d+1}} M(t,x,\xi) \, dx \, d\xi=\|u(t,\cdot)-\tilde{u}(t,\cdot)\|_{L^1(\R^d)},
$$we infer that $\mathcal{M}(\cdot)$
is the limit of $\mathcal{M}^\epsilon(\cdot)$ in $L^1_{\rm loc}([0,\infty))$.
By Lemma~\ref{prop-convergence-of-M_epsilon_in-rigorous-proof}
$$
t \geq 0 \mapsto \|u(t,\cdot)-\tilde{u}(t,\cdot)\|_{L^1(\R^d)}
$$is essentially nondecreasing and we get \eqref{contraction-principle} by using that
$$
\lim_{t \da 0} \|u(t,\cdot)-\tilde{u}(t,\cdot)\|_{L^1(\R^d)} = \|u_0-v_0\|_{L^1(\R^d)}.
$$Let us recall that the latter limit is a consequence of Proposition \ref{continuity-in-time}
proved in Appendix~\ref{appendix:prop}. This completes the proof of Theorem \ref{uniqueness-theorem}.
\end{proof}

Let us now establish Lemma \ref{prop-convergence-of-M_epsilon_in-rigorous-proof}.  Before, we need some technical results. The two first ones work as in \cite{Per09}. Let us give details for completeness. 
 
\begin{lem}\label{lem:accurate}
Let $u \in L^\infty(\R^+;L^1(\R^{d}))$ and $\epsilon >0$. Then
$$
\chi_\epsilon \in C([0,\infty);L^1(\R^{d+1})) \cap L^\infty(\R^+ \times \R^{d+1})
$$and all its 
derivatives  in $(t,x)$ satisfy the same 
property.
\end{lem}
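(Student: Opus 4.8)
The plan is to prove Lemma~\ref{lem:accurate} by transferring the known regularity of the mollifier $\rho_\epsilon$ onto $\chi_\epsilon$ through standard convolution estimates, using only the $L^\infty(\R^+;L^1)\cap L^\infty$ bound \eqref{estimate-chi} on $\chi$ itself. First I would note that, since the convolution acts only in $(t,x)$ and not in $\xi$, for every fixed $\xi$ the function $(t,x)\mapsto\chi(\xi;u(t,x))$ lies in $L^\infty(\R^+\times\R^d)$ and is integrable in $x$ uniformly in $t$. The boundedness in $L^\infty(\R^+\times\R^{d+1})$ is immediate from $|\chi|\le 1$ and $\int_{\R^{d+1}}|\rho_\epsilon|=1$, hence $\|\chi_\epsilon\|_\infty\le\|\chi\|_\infty$. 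For the $L^1$ bound in $(x,\xi)$ at fixed $t$, I would apply Young's inequality in the $(t,x)$ variables together with Fubini in $\xi$, obtaining $\|\chi_\epsilon(t,\cdot,\cdot)\|_{L^1(\R^{d+1})}\le\|\rho_\epsilon\|_{L^1}\,\|\chi\|_{L^\infty(\R^+;L^1(\R^{d+1}))}$, which is finite by \eqref{estimate-chi}.

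Next I would establish the continuity in time with values in $L^1(\R^{d+1})$. The clean way is to write $\chi_\epsilon(t,x,\xi)=\int_0^\infty\int_{\R^d}\rho_\epsilon(-s,-\eta)\,\chi(t+s,x+\eta,\xi)\,ds\,d\eta$ as in the excerpt, and to exploit that $t\mapsto\chi(t,\cdot,\cdot)$ is itself continuous with values in $L^1(\R^{d+1})$: indeed $u\in C([0,\infty);L^1(\R^d))$ (this holds for kinetic solutions by Proposition~\ref{continuity-in-time}), and the isometry of Remark~\ref{isometry} gives $\|\chi(t,\cdot,\cdot)-\chi(t',\cdot,\cdot)\|_{L^1(\R^{d+1})}=\|u(t,\cdot)-u(t',\cdot)\|_{L^1(\R^d)}$. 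Convolution with the fixed smooth kernel $\rho_\epsilon$ then preserves (indeed improves) this continuity: for $t,t'$ close, I would bound $\|\chi_\epsilon(t)-\chi_\epsilon(t')\|_{L^1(\R^{d+1})}$ by the modulus of continuity of the $L^1$-valued map $\tau\mapsto\chi(\tau,\cdot,\cdot)$ integrated against $\rho_\epsilon$, using dominated convergence and the uniform $L^1$ bound to pass to the limit.

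For the derivatives in $(t,x)$, the point is that differentiation falls entirely on the smooth compactly supported kernel: $\partial_t^j\nabla_x^k\chi_\epsilon=(\partial_t^j\nabla_x^k\rho_\epsilon)\ast\chi$, since $\partial_t^j\nabla_x^k\rho_\epsilon\in\mathcal{D}(\R^{d+1})$ is again an admissible (signed) kernel. Applying exactly the same two estimates as above, now with $\partial_t^j\nabla_x^k\rho_\epsilon$ in place of $\rho_\epsilon$, I obtain that every such derivative lies in $L^\infty(\R^+\times\R^{d+1})$ and in $L^\infty(\R^+;L^1(\R^{d+1}))$, with the same $C([0,\infty);L^1(\R^{d+1}))$ continuity in time, completing the claim. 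I would justify differentiating under the integral sign by the dominated convergence theorem, using that the difference quotients of $\rho_\epsilon$ converge uniformly (smooth compact support) and are dominated, and that $\chi$ is uniformly bounded and integrable.

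The main obstacle, such as it is, is purely bookkeeping: one must be careful that the convolution is taken only in $(t,x)$ while $\xi$ is a passive parameter, so that Fubini is applied with respect to the product measure $\rho_\epsilon(t,x)\,dt\,dx\otimes d\xi$ and the $L^1$ norm is the joint norm in $(x,\xi)$ at fixed $t$; the measurability of $(t,x,\xi)\mapsto\chi(\xi;u(t,x))$ (Borel representative of $u$, as noted in the remarks) guarantees Fubini applies. No genuine analytic difficulty arises because $\epsilon>0$ is fixed and $\rho_\epsilon$ is smooth with compact support, so all constants may depend on $\epsilon$; the only input about $u$ that is really used is the uniform-in-time $L^1$ bound and the $C([0,\infty);L^1)$ time continuity.
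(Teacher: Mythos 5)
Your overall plan---let all the smoothness fall on the fixed kernel $\rho_\epsilon$---is the paper's own (one-line) proof, and your $L^\infty$ bound, your $L^1_{x,\xi}$ bound via Young's inequality, and your remark that derivatives land on $\rho_\epsilon$ are all correct. However, there is a genuine gap in the time-continuity step. You deduce $\chi_\epsilon\in C([0,\infty);L^1(\R^{d+1}))$ from the claim that $t\mapsto\chi(t,\cdot,\cdot)$ is itself $L^1$-continuous in time, justified by ``$u\in C([0,\infty);L^1(\R^d))$ holds for kinetic solutions by Proposition~\ref{continuity-in-time}.'' This is inadmissible on three counts. First, the lemma assumes only $u\in L^\infty(\R^+;L^1(\R^d))$, with no kinetic-solution structure, and such a function need not have any continuous-in-time representative (think of a step function in $t$), so your ``modulus of continuity of the $L^1$-valued map $\tau\mapsto\chi(\tau,\cdot,\cdot)$'' need not exist. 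Second, Proposition~\ref{continuity-in-time} does not assert $u\in C([0,\infty);L^1)$; it only provides the essential limit $\lim_{t\downarrow 0}\|u(t,\cdot)-u_0\|_{L^1}=0$ at the initial time. Third, and most seriously, this lemma is used inside the proof of Lemma~\ref{prop-convergence-of-M_epsilon_in-rigorous-proof}, i.e.\ inside the uniqueness proof, where $u$ and $\tilde u$ are arbitrary kinetic solutions: their continuity in time is a \emph{conclusion} of Theorem~\ref{thm:wp-kinetic}, obtained only after uniqueness and the existence construction are combined. Invoking it at this stage would be circular---which is precisely why Theorem~\ref{uniqueness-theorem} states the contraction only for a.e.\ $t$.

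The repair is standard and needs nothing beyond \eqref{estimate-chi}: shift the modulus of continuity onto the kernel rather than onto $\chi$. For $t,t'\geq 0$, Fubini--Tonelli gives
\begin{align*}
\|\chi_\epsilon(t,\cdot,\cdot)-\chi_\epsilon(t',\cdot,\cdot)\|_{L^1(\R^{d+1})}
&\leq \int_0^\infty\!\!\int_{\R^{d+1}}|\chi(s,\eta,\xi)|
\left(\int_{\R^d}\left|\rho_\epsilon(t-s,x-\eta)-\rho_\epsilon(t'-s,x-\eta)\right| dx\right) d\xi\, d\eta\, ds\\
&\leq \|\chi\|_{L^\infty(\R^+;L^1(\R^{d+1}))}\;
\big\|\rho_\epsilon(t-\cdot,\cdot)-\rho_\epsilon(t'-\cdot,\cdot)\big\|_{L^1(\R^{1+d})},
\end{align*}
and the last factor tends to $0$ as $t'\to t$ because the fixed kernel $\rho_\epsilon$ is uniformly continuous with compact support (continuity of translations in $L^1$ applied to $\rho_\epsilon$, not to $\chi$). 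The identical estimate with $\partial_t^j\nabla_x^k\rho_\epsilon$ in place of $\rho_\epsilon$ yields the same continuity for every $(t,x)$-derivative. With this replacement your proof is complete, and---as in the paper---the only input about $u$ is the uniform-in-time $L^1$ bound, which is exactly what makes the lemma applicable to bare kinetic solutions.
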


 The proof is immediate from \eqref{estimate-chi} since $\chi_\varepsilon=\chi \ast   \rho_\epsilon  
 $. 
%

\begin{corollary}
Let $u,\tilde{u} \in L^\infty(\R^+;L^1(\R^{d}))$. Then $\mathcal{M}^\epsilon \in C^1([0,\infty))$ with
\begin{equation}\label{rigorous-time-derivative-b}
\dot{\mathcal{M}}^\epsilon(t) = \int_{\R^{d+1}} \sgn(\xi) \left(\partial_t \chi_\epsilon+\partial_t \tilde{\chi}_\epsilon \right)dx \, d\xi\\
- 2 \int_{\R^{d+1}} \partial_t \left[ \chi_\epsilon \tilde{\chi}_\epsilon  \right] dx \, d\xi \quad \forall t \geq 0.
\end{equation}
\end{corollary}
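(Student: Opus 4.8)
The plan is to split $\mathcal{M}^\epsilon=\mathcal{M}^\epsilon_1+\mathcal{M}^\epsilon_2+\mathcal{M}^\epsilon_3$ as in \eqref{preciseM} and differentiate each piece, the main preliminary step being to dispose of the absolute values. By Lemma~\ref{properties-of-khi}(i), for each fixed $\xi$ the function $(s,\eta)\mapsto\chi(\xi;u(s,\eta))$ keeps the sign of $\xi$ (it is nonnegative if $\xi>0$, nonpositive if $\xi<0$); since $\rho_\epsilon\geq 0$, the convolution preserves this slicewise, so that
\[
|\chi_\epsilon(t,x,\xi)|=\sgn(\xi)\,\chi_\epsilon(t,x,\xi)\quad\text{a.e.},
\]
and likewise for $\tilde\chi_\epsilon$. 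Consequently $\mathcal{M}^\epsilon_1(t)=\int_{\R^{d+1}}\sgn(\xi)\,\chi_\epsilon\,dx\,d\xi$ and $\mathcal{M}^\epsilon_2(t)=\int_{\R^{d+1}}\sgn(\xi)\,\tilde\chi_\epsilon\,dx\,d\xi$, which are now linear in the regularized kinetic functions.

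First I would treat $\mathcal{M}^\epsilon_1$ (and $\mathcal{M}^\epsilon_2$ identically). By Lemma~\ref{lem:accurate}, $t\mapsto\chi_\epsilon(t,\cdot,\cdot)$ is differentiable from $[0,\infty)$ into $L^1(\R^{d+1})$ with derivative $\partial_t\chi_\epsilon\in C([0,\infty);L^1(\R^{d+1}))$. Since the weight $\sgn(\xi)$ is bounded, the map $\phi\mapsto\int_{\R^{d+1}}\sgn(\xi)\,\phi\,dx\,d\xi$ is a bounded linear functional on $L^1(\R^{d+1})$; composing it with the $L^1$-valued $C^1$ path $t\mapsto\chi_\epsilon(t)$ shows that $\mathcal{M}^\epsilon_1\in C^1$ with
\[
\dot{\mathcal{M}}^\epsilon_1(t)=\int_{\R^{d+1}}\sgn(\xi)\,\partial_t\chi_\epsilon\,dx\,d\xi,
\]
the continuity in $t$ coming from that of $t\mapsto\partial_t\chi_\epsilon(t)\in L^1$. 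This is exactly the first integral in \eqref{rigorous-time-derivative-b}.

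The bilinear term $\mathcal{M}^\epsilon_3(t)=-2\int_{\R^{d+1}}\chi_\epsilon\tilde\chi_\epsilon\,dx\,d\xi$ is where the real work lies, since neither factor is known to be differentiable in $L^\infty$ but only in $L^1$, while the product must be integrated against Lebesgue measure. I would compute the derivative directly from the difference quotient, using the telescoping
\[
\chi_\epsilon(t{+}h)\tilde\chi_\epsilon(t{+}h)-\chi_\epsilon(t)\tilde\chi_\epsilon(t)=\bigl[\chi_\epsilon(t{+}h)-\chi_\epsilon(t)\bigr]\tilde\chi_\epsilon(t{+}h)+\chi_\epsilon(t)\bigl[\tilde\chi_\epsilon(t{+}h)-\tilde\chi_\epsilon(t)\bigr].
\]
Dividing by $h$, integrating, and letting $h\to0$, I would pass to the limit in each product by combining $L^1$-convergence of the difference quotients (towards $\partial_t\chi_\epsilon(t)$, resp. $\partial_t\tilde\chi_\epsilon(t)$) with the uniform bounds $\|\chi_\epsilon\|_\infty,\|\tilde\chi_\epsilon\|_\infty\leq 1$ and the fact, also from Lemma~\ref{lem:accurate}, that $\partial_t\chi_\epsilon(t),\chi_\epsilon(t)\in L^\infty$: indeed, if $a_h\to a$ in $L^1$ with $\sup_h\|a_h\|_\infty<\infty$ and $b_h\to b$ in $L^1$ with $a\in L^\infty$, then $\int a_hb_h\to\int ab$. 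This yields
\[
\dot{\mathcal{M}}^\epsilon_3(t)=-2\int_{\R^{d+1}}\bigl(\partial_t\chi_\epsilon\,\tilde\chi_\epsilon+\chi_\epsilon\,\partial_t\tilde\chi_\epsilon\bigr)\,dx\,d\xi=-2\int_{\R^{d+1}}\partial_t\bigl[\chi_\epsilon\tilde\chi_\epsilon\bigr]\,dx\,d\xi,
\]
continuous in $t$ by the same estimates. Summing the three contributions gives $\mathcal{M}^\epsilon\in C^1([0,\infty))$ together with \eqref{rigorous-time-derivative-b}. The main obstacle is precisely this last step: reconciling the mere $L^1$-differentiability in time of $\chi_\epsilon,\tilde\chi_\epsilon$ with the $L^\infty$-pairing hidden in the product, which is overcome by systematically exploiting the simultaneous $L^1$-continuity and uniform $L^\infty$-bounds provided by Lemma~\ref{lem:accurate}.
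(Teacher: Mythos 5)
Your proof is correct and takes essentially the same approach as the paper: the same decomposition of $\mathcal{M}^\epsilon$ from \eqref{preciseM}, the same key sign identity $|\chi_\epsilon|=\sgn(\xi)\,\chi_\epsilon$ (inherited from $\chi$ through the nonnegative kernel $\rho_\epsilon$), and the same regularity input from Lemma~\ref{lem:accurate}. The only difference is in the packaging of the differentiation step: the paper establishes \eqref{rigorous-time-derivative-b} in $\mathcal{D}'((0,\infty))$ and then upgrades to $C^1$ because the right-hand side is continuous in $t$, whereas you work with classical difference quotients and a telescoping/$L^1$--$L^\infty$ limit argument for the bilinear term; both justifications are routine once Lemma~\ref{lem:accurate} is in hand.
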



\begin{proof}
 By \eqref{def-chi} for almost every $(t,x,\xi)$  there holds $\sgn(\chi(t,x,\xi))=\sgn(\xi)$; this property is inherited by $\chi_\epsilon$, for all $\epsilon>0$.
  Hence we have  $|\chi_\epsilon|=\sgn(\xi) \chi_\epsilon \in C([0,\infty);L^1(\R^{d+1}))$ with a time distribution derivative satisfying
$$
\partial_t |\chi_\epsilon|=\sgn(\xi) \partial_t \chi_\epsilon \in C([0,\infty);L^1(\R^{d+1})).
$$For any $\varphi \in \mathcal{D}((0,\infty))$ and $\phi \in \mathcal{D}(\R^{d+1})$, we thus have
\begin{equation*}
\begin{split}
\int_0^\infty \int_{\R^{d+1}} |\chi_\epsilon| \frac{d \varphi}{dt}(t) \phi(x,\xi) \, dt \, dx \, d\xi & = - \int_0^\infty \int_{\R^{d+1}} \varphi(t) \phi(x,\xi) \sgn(\xi) \partial_t \chi_\epsilon   \, dt \, dx \, d\xi.
\end{split}
\end{equation*}
Since we know that $\chi_\epsilon$ and $\partial_t \chi_\epsilon$ belong to $L^1_{\rm loc}([0,\infty);L^1(\R^{d+1}))$, we can  take $\phi \equiv 1$ and find that 
$$
\frac{d}{dt} \left(t \mapsto \int_{\R^{d+1}} |\chi_\epsilon| \, dx \, d\xi \right)=\int_{\R^{d+1}} \sgn(\xi) \partial_t \chi_\epsilon  \, dx \, d\xi \quad \mbox{in} \quad \mathcal{D}'((0,\infty)).
$$This gives the contribution to \eqref{rigorous-time-derivative-b} of the term $\mathcal{M}^\epsilon_1=\int_{\R^{d+1}} |\chi_\epsilon| \, dx \, d\xi$  from \eqref{preciseM}.  We argue in the same way for the terms $\mathcal{M}^\epsilon_2$, $\mathcal{M}^\epsilon_3$ and justify \eqref{rigorous-time-derivative-b} in the sense of distributions.  In particular $\mathcal{M}^\epsilon \in C^1([0,\infty))$ since the right-hand side of \eqref{rigorous-time-derivative-b}  is continuous by Lemma \ref{lem:accurate}. 
\end{proof}


 The next lemma is specific to nonlocal diffusions especially \eqref{pointwise-dissipation}. It will allow us  to avoid regularization   in $\xi$ during the whole proof of uniqueness, as mentioned previously. 

\begin{lem}\label{lem:pointwise-control}
 Assume \eqref{condition-on-F}--\eqref{condition-symmetry}  and $u$ is a kinetic solution of    \eqref{cauchy-problem} (for some $L^1$ initial data).  
Let $m$ and $n$ be the associated dissipation measures. Let $\epsilon>0$ be fixed.
 Then: 
\begin{enumerate}[{\rm (i)}]
\item \label{newii} $m_\epsilon+n_\epsilon \in W^{1,\infty}_{\textup{loc}}([0,\infty) \times \R^{d+1})$ and $\partial_\xi(m_\epsilon+n_\epsilon) \in C([0,\infty); L^1(\R^d\times K)$, for any compact $K \subset \R_\xi$,
\item \label{cond-infinity-eps} there exists
${ \nu_{\epsilon} } \in C_0(\R_\xi)$ such that
\begin{equation*}
\int_{\R^d} (m_\epsilon+n_\epsilon)(t,x,\xi) \, dx \leq { \nu_{\epsilon} }(\xi),
\end{equation*}
for any   $t \geq 0$   and $\xi \in \R$, 
\item \label{pointwise-dissipation} and for any $(t,x,\xi) \in [0,\infty) \times \R^{d+1}$,
\begin{multline*}
(m_\epsilon+n_\epsilon)(t,x,\xi)
\geq \int_{0}^\infty \int_{\R^{2d}} |A(u(t+s,x+z+\eta))-A(\xi)|\\
\cdot \Char_{\conv\{u(t+s,x+\eta),u(t+s,x+z+\eta)\}}(\xi) 
\rho_\epsilon 
(-s,-\eta) \mu(z) \, ds \, d\eta \, dz
\end{multline*}
with the 
function $\Char$ everywhere defined in \eqref{def-char}.
\end{enumerate}  
\end{lem}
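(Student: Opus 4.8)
The plan is to reduce all three assertions to the \emph{regularized kinetic equation} obtained by convolving the equation in \eqref{kds} in the $(t,x)$ variables only. Since $F'(\xi)$ and $A'(\xi)$ depend on $\xi$ alone and $g_x$ acts in $x$, both commute with convolution by $\rho_\epsilon$, so I would first establish
\[
\partial_\xi(m_\epsilon+n_\epsilon)=\partial_t\chi_\epsilon+F'(\xi)\cdot\nabla_x\chi_\epsilon+A'(\xi)\,g_x[\chi_\epsilon]
\]
in the sense of distributions. From here part~(i) is essentially bookkeeping: the $(t,x)$-derivatives of $m_\epsilon+n_\epsilon$ fall on the smooth kernel and are locally bounded because $m+n\in M^1_{\rm loc}$, while the identity above controls $\partial_\xi(m_\epsilon+n_\epsilon)$. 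By Lemma~\ref{lem:accurate} the fields $\partial_t\chi_\epsilon$ and $\nabla_x\chi_\epsilon$ lie in $C([0,\infty);L^1(\R^{d+1}))$, and $g_x[\chi_\epsilon]$ does too (Lemma~\ref{cor:bf}); multiplying by the bounded symbols $F'(\xi),A'(\xi)$ on a compact $K\subset\R_\xi$ then places the right-hand side in $C([0,\infty);L^1(\R^d\times K))$. Local boundedness of $g_x[\chi_\epsilon]$ via the $C^2_b\to C_b$ mapping property of $g$ (Lemma~\ref{lem:well-def-g}) closes the $W^{1,\infty}_{\rm loc}$ claim.

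Part~(iii) is the most direct: since $m\ge 0$ one has $m_\epsilon\ge 0$, so it suffices to compute $n_\epsilon$. Inserting the explicit formula \eqref{def-nonloc-dissip} into the definition of $f_\epsilon$ and applying Fubini (legitimate as all integrands are nonnegative) reproduces verbatim the triple integral on the right-hand side, with the \emph{everywhere-defined} $\Char$ of \eqref{def-char}; keeping the value $\tfrac12$ at the endpoints is exactly what will later permit the use of Lemma~\ref{Lemma-F(a,b,c,d)-leq_G(a,b,c,d)}. Thus $m_\epsilon+n_\epsilon\ge n_\epsilon$ gives precisely the asserted lower bound.

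For part~(ii) I would integrate in $x$. Using $\int_{\R^d}\rho_2=1$, Fubini reduces $\int_{\R^d}(m_\epsilon+n_\epsilon)(t,\cdot,\xi)\,dx$ to a time-mollification (supported in $s\in(0,\epsilon)$) of the $x$-marginal $P(t,\xi):=\int_{\R^d}(m+n)(t,y,\xi)\,dy$; the global bound \eqref{cond-infinity}, namely $\int_0^\infty P(\cdot,\xi)\,dt\le\nu(\xi)$, then yields $\int_{\R^d}(m_\epsilon+n_\epsilon)(t,\cdot,\xi)\,dx\le \tfrac{\|\rho_1\|_\infty}{\epsilon}\,\nu(\xi)$ for a.e.\ $\xi$, uniformly in $t$. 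To upgrade this a.e.\ bound by the merely measurable $\nu\in L^\infty_0$ into a pointwise bound by a $C_0$ function, I would use that $\xi\mapsto\int_{\R^d}(m_\epsilon+n_\epsilon)(t,\cdot,\xi)\,dx$ is continuous: integrating the regularized equation in $x$ annihilates the flux and diffusion terms ($\int_{\R^d}\nabla_x\chi_\epsilon\,dx=0$ and $\int_{\R^d}g_x[\chi_\epsilon]\,dx=0$ by the integration-by-parts property of $g$), leaving $\partial_\xi\!\int_{\R^d}(m_\epsilon+n_\epsilon)\,dx=\partial_t\!\int_{\R^d}\chi_\epsilon\,dx\in L^1(\R_\xi)$, so this marginal lies in $W^{1,1}_{\rm loc}(\R_\xi)$ and has a continuous representative. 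Picking a continuous majorant $\bar\nu\in C_0$ of $\nu$, for instance $\bar\nu(\xi):=\esssup_{\zeta}\nu(\zeta)e^{-|\xi-\zeta|}$, and promoting the a.e.\ inequality to an everywhere one by continuity of both sides, I would set $\nu_\epsilon:=\tfrac{\|\rho_1\|_\infty}{\epsilon}\,\bar\nu\in C_0(\R_\xi)$.

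The main obstacle is precisely this last point: the a priori control is only an a.e.\ bound by a possibly discontinuous $\nu\in L^\infty_0$, whereas the statement demands a continuous, vanishing-at-infinity majorant valid at \emph{every} $(t,\xi)$. Bridging the gap forces one to extract $\xi$-regularity \emph{without} regularizing in $\xi$, which is feasible only because testing the regularized equation against $dx$ eliminates the first- and zeroth-order nonlocal terms and exhibits $\partial_t\!\int_{\R^d}\chi_\epsilon\,dx$ as the $\xi$-derivative of the marginal; the decay at infinity is then inherited from $\nu\in L^\infty_0$ through the $C_0$ majorant, while parts~(i) and~(iii) remain routine consequences of Lemma~\ref{lem:accurate} and the explicit form of $n$.
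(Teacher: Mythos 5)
Your part~(iii) contains the genuine gap, and it is exactly the point this lemma exists to settle. The Fubini/Tonelli computation does show that the slice-wise convolution $\int_0^\infty\int_{\R^d} n(s,\eta,\xi)\,\rho_\epsilon(t-s,x-\eta)\,ds\,d\eta$ equals the triple integral on the right-hand side of (iii); but this object represents the measure $n_\epsilon$ only up to a Lebesgue-null set, whereas the left-hand side of (iii) is the $W^{1,\infty}_{\rm loc}$ (hence continuous) representative of $m_\epsilon+n_\epsilon$ produced in part~(i). So ``$m_\epsilon\ge 0$ plus Fubini'' yields the inequality only for \emph{a.e.} $(t,x,\xi)$, and since the right-hand side is not continuous in $\xi$ (the $\Char$ factors jump, and the value $\tfrac12$ of \eqref{def-char} at an endpoint is neither a one-sided upper nor lower limit), an a.e.\ inequality between a continuous function and a discontinuous one does not upgrade to an everywhere inequality for free. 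The everywhere statement is not a pedantic refinement: in the uniqueness proof (see \eqref{detail} and \eqref{complicate-writting}) the function $(m_\epsilon+n_\epsilon)(t,x,\cdot)$ is evaluated along the graph $\xi=T_R(\tilde u(t+\tau,x+\theta))$ and at $\xi=0$, i.e.\ on null sets of $\xi$'s, where the a.e.\ version of (iii) says nothing. The paper closes this gap with a dedicated argument: mollify in $\xi$ with an \emph{even} kernel $\theta_\delta$, pass to the limit on the left using the $\xi$-continuity from part~(i), observe that the symmetric mollification of the $\Char$ factors converges pointwise, for each fixed $(s,\eta,z)$, to the $\tfrac12$-normalized representative \eqref{def-char} (this is where evenness is used --- the $\tfrac12$ arises as the symmetric average of the inside and outside limits), and conclude by Fatou. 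Nothing in your proposal plays this role; you even single out (iii) as ``the most direct'' part, while it is the only place where the paper resorts to a regularization in $\xi$.

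A secondary flaw is in part~(i): you justify the local boundedness of $m_\epsilon+n_\epsilon$ and of its $(t,x)$-derivatives by ``$m+n\in M^1_{\rm loc}$''. That is not enough: convolution with $\rho_\epsilon(t,x)$ smooths only in $(t,x)$, and if $m$ had, say, an atomic part in $\xi$ (e.g.\ $m=\delta_{\xi_0}(\xi)\otimes\lambda(t,x)$), then $m_\epsilon$ would not even be a function. What makes the argument work is \eqref{cond-infinity}: the $\xi$-marginal of $m+n$ has density bounded by $\nu\in L^\infty$, whence $\left|\langle \partial_t m_\epsilon,\varphi\rangle\right|\le \|\partial_t\rho_\epsilon\|_\infty\|\nu\|_{L^\infty}\|\varphi\|_{L^1}$ and, by $L^1$--$L^\infty$ duality, $\partial_t m_\epsilon\in L^\infty$ (similarly for $m_\epsilon$, $n_\epsilon$ and all $(t,x)$-derivatives); this is the paper's first step, and only then does the mollified kinetic equation control $\partial_\xi(m_\epsilon+n_\epsilon)$. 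Your part~(ii) is essentially sound --- your majorant $\esssup_{\zeta}\nu(\zeta)e^{-|\xi-\zeta|}$ is a workable variant of the paper's explicit $\nu_\epsilon$, and the a.e.-to-everywhere upgrade follows from Fatou together with the $\xi$-continuity of $m_\epsilon+n_\epsilon$ from part~(i), which makes your detour through the $x$-integrated equation unnecessary --- but (i) needs the correction above, and (iii) needs the missing even-kernel argument.
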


\begin{proof}
First, let us prove that  $m_\epsilon$ and $n_\epsilon$
and all their  derivatives in $(t,x)$ belong to $L^\infty(\R^+ \times \R^{d+1})$. We have for instance
\begin{multline*}
\langle \partial_t m_\epsilon, \varphi \rangle_{\mathcal{D}',\mathcal{D}}\\
=\int_0^\infty \int_{\R^{d+1}} m(s,\eta,\xi) \left(\int_0^\infty \int_{\R^d} \varphi(t,x,\xi) \partial_t 
\rho_\epsilon 
(t-s,x-\eta) \, dt \, dx \right) ds \, d\eta \,  d\xi,
\end{multline*}
for any $\varphi \in \mathcal{D}((0,\infty) \times \R^{d+1})$.
 By \eqref{cond-infinity} we deduce that 
$$
{{\left|\langle \partial_t m_\epsilon, \varphi \rangle_{\mathcal{D}',\mathcal{D}}\right|} \leq \|\partial_t 
\rho_\epsilon 
\|_\infty \|\nu\|_{L^\infty(\R)} \|\varphi\|_{L^1(\R^+ \times \R^{d+1})}},
$$which proves  
that $\partial_t m_\epsilon \in L^\infty(\R^+ \times \R^{d+1})$. We argue the same way for the other derivatives in $(t,x)$ of $m_\epsilon$ and $n_\epsilon$.  
 
Now  taking the convolution of   the equation satisfied by $\chi$ in \eqref{kds} gives
\begin{equation*}
 \partial_{t}\chi_{\epsilon}+F'(\xi).\nabla_x\chi_{\epsilon}+
A'(\xi) g_x[\chi_{\epsilon}]
 =\partial_\xi (m_{\epsilon}+n_{\epsilon}) \quad \mbox{in} \quad \mathcal{D}'((0,\infty) \times \R^{d+1}).
\end{equation*}
We deduce that  
$\partial_\xi (m_{\epsilon}+n_{\epsilon}) \in L^\infty_{\textup{loc}}([0,\infty) \times \R^{d+1})$, by what precedes, which proves the first part of \eqref{newii}. The second part is also immediate from the above equation and 
Lemmas \ref{lem:accurate} and \ref{cor:bf}.

Let us now prove \eqref{cond-infinity-eps}. We use again \eqref{cond-infinity} to see that 
\begin{equation*}
\begin{split}
& \int_{\R^{d+1}}  (m_{\epsilon}+n_{\epsilon})(t,x,\xi) \varphi(\xi) \, dx \, d\xi\\
& =\int_0^\infty \int_{\R^{2d+1}} (m+n)(s,\eta,\xi) \underbrace{
\rho_\epsilon 
(t-s,x-\eta)}_{=
\rho_1 
((t-s)/\epsilon) 
\rho_2 
((x-\eta)/\epsilon) \epsilon^{-d-1}} \varphi(\xi) \, ds \, dx \, d\eta \, d\xi\\
& \leq \frac{\|
\rho_1 
\|_\infty}{\epsilon}   \int_{\R} \nu(\xi) \varphi(\xi) \, d\xi,
\end{split}
\end{equation*}
for every   $t \geq 0$   and nonnegative $\varphi \in \mathcal{D}(\R_\xi)$. Setting $C_\epsilon:=\frac{\|
\rho_1 
\|_\infty}{\epsilon}$,  we infer
$$
\int_{\R^{d}}  (m_{\epsilon}+n_{\epsilon})(t,x,\xi) \, dx \leq C_\epsilon \nu(\xi),
$$for almost every $\xi$. We can replace the right-hand side by ${\nu_{\epsilon}} \in C_0(\R_\xi)$, if choosing such a ${\nu_{\epsilon}}$ satisfying ${\nu_{\epsilon}} \geq C_\epsilon \nu$. This is the case if we take for instance
\begin{equation*}
{ \nu_{\epsilon} }(\xi):= \frac{2 C_\epsilon}{|\xi|} \int_{|\xi|/2}^{|\xi|} \esssup_{|\zeta| \geq \tau} \nu(\zeta) \, d\tau
\end{equation*}
(recall that  $\nu \in L_0^\infty(\R_\xi)$). 
 The pointwise inequality in  \eqref{cond-infinity-eps} is then easily deduced from Fatou's lemma.

Let us finally prove \eqref{pointwise-dissipation}. Only at this point, we use in passing the regularization in $\xi$. We consider a kernel $
\theta 
 \in \mathcal{D}(\R_\xi)$ that we assume to be nonnegative, even, and such that $\int 
\theta 
=1$. Let us take the approximate unit
\begin{equation*}
\theta 
_\delta(\xi):=\frac{1}{\delta} 
\theta 
\left(\frac{\xi}{\delta} \right)
\end{equation*}
and define
$(m_\epsilon+n_\epsilon)_\delta:=(m_\epsilon+n_\epsilon)\ast_\xi 
\theta 
_\delta$. For each $(t,x,\xi) \in [0,\infty) \times \R^{d+1}$, we have
\begin{equation*}
\begin{split}
(m_\epsilon+n_\epsilon)_\delta(t,x,\xi)
& = \int_{0}^\infty \int_{\R^{d+1}} (m+n)(s,\eta,\zeta) 
\rho_\epsilon 
(t-s,x-\eta)  
\theta 
_\delta(\xi-\zeta) \, ds \, d\eta \, d\zeta\\
& \geq  \int_{0}^\infty \int_{\R^{d+1}}  n(t+s,x+\eta,\xi+\zeta) 
\rho_\epsilon 
(-s,-\eta)  
\theta 
_\delta(\zeta) \, ds \, d\eta \, d\zeta\\
& = \int_{0}^\infty \int_{\R^{2d}} I_\delta(t,x,\xi;s,\eta,z)   
\rho_\epsilon 
(-s,-\eta) \mu(z) \, ds \, d\eta \, dz,
\end{split}
\end{equation*}
where $I_\delta$ stands for the expression
$$
\int_{\R} \left|A(u(t+s,x+z+\eta))-A(\xi+\zeta)\right| \Char_{\conv\{u(t+s,x+\eta),u(t+s,x+z+\eta)\}}
(\xi+\zeta)
\theta 
_\delta(\zeta) \, d\zeta.
$$Let us pass to the limit as $\delta \da 0$ in order to obtain \eqref{pointwise-dissipation}. Note first that the left-hand side always converges towards $(m_\epsilon+n_\epsilon)(t,x,\xi)$ by the
 item \eqref{newii} established above.  As far as the right-hand side is concerned, we have
\begin{equation*}
\begin{split}
\lim_{\delta \da 0} I_\delta & = \left|A(u(t+s,x+z+\eta))-A(\xi)\right|  \Char_{\conv\{u(t+s,x+\eta),u(t+s,x+z+\eta)\}}(\xi)
\end{split}
\end{equation*}
for every fixed $(s,\eta,z) \in \R^+ \times \R^{2d}$,  taking into account the everywhere representation \eqref{def-char}. Indeed, this limit exists also for $\xi$ being an extremity of the interval,
in this case, the value $\frac{1}{2}$ appears at the limit because the kernel $
\theta 
$ is even. Fatou's lemma then completes the proof of \eqref{pointwise-dissipation}.
\end{proof}

We are now ready to prove Lemma \ref{prop-convergence-of-M_epsilon_in-rigorous-proof}.

\begin{proof}[Proof of Lemma \ref{prop-convergence-of-M_epsilon_in-rigorous-proof}]
 Similarly  to   the formal computations, we will show that the right-hand side of \eqref{rigorous-time-derivative-b} is nonnegative by integrating the equations in $\chi_\epsilon$ and $\tilde{\chi}_\epsilon$.  Recall that  
\begin{equation}\label{Convolution-Equation-in-khi(u)-b}
 \partial_{t}\chi_{\epsilon}+F'(\xi).\nabla_x\chi_{\epsilon}+
A'(\xi) g_x[\chi_{\epsilon}]
 =\partial_\xi (m_{\epsilon}+n_{\epsilon})
\end{equation}
(with a similar equation for $\tilde{\chi}_\epsilon$).
Since the terms in $F'(\xi)$ and $A'(\xi)$ may not be integrable in $\xi$, we need to truncate. This amounts to rewrite \eqref{rigorous-time-derivative-b} 
as 
\begin{multline}\label{rigorous-time-derivative}
\dot{\mathcal{M}}^\epsilon(t) = \lim_{R \to \infty} \Bigg\{ \int_{\R^{d}}  \int_{-R}^R \left(\sgn(\xi) \partial_{t}\chi_{\epsilon}-2\tilde{\chi}_\epsilon \partial_t \chi_\epsilon \right) dx \, d\xi\\
+\int_{\R^{d}} \int_{-R}^R \left(\sgn(\xi) \partial_{t}\tilde{\chi}_{\epsilon}-2\chi_\epsilon \partial_t \tilde{\chi}_\epsilon \right) dx \, d\xi\Bigg\}, \quad \forall t \geq 0,
\end{multline}
and estimate the terms in brackets before passing to the limit. Note that the above limit holds by Lemma \ref{lem:accurate}. In the sequel $\epsilon>0$ is fixed.

For any $R>0$, each term of \eqref{Convolution-Equation-in-khi(u)-b} belongs to $C([0,\infty);L^{1}(\R^d \times (-R,R)))$ by Lemmas \ref{lem:accurate} and \ref{cor:bf}. In particular, we can integrate \eqref{Convolution-Equation-in-khi(u)-b} in $x \in \R^d$ and $\xi \in (-R,R)$ for any fixed $t \geq 0$. 
 Proceeding   so by previously multiplying \eqref{Convolution-Equation-in-khi(u)-b} by $\sgn(\xi)$ gives 
\begin{equation}\label{first-der-accurate}
\int_{\R^d} \int_{-R}^R  \sgn(\xi) \partial_{t}\chi_{\epsilon} \, dx \, d\xi=\int_{\R^d} \int_{-R}^R \sgn(\xi)  \partial_\xi (m_{\epsilon}+n_{\epsilon}) \, dx \, d\xi.
\end{equation}
Indeed, let us make precise that the contributions to this calculation from the convection and the nonlocal diffusion terms in \eqref{Convolution-Equation-in-khi(u)-b} vanish, due to the integration by parts in $x$.
Its validity is justified, in particular, by Lemma~\ref{lem:accurate} which gives us enough regularity to apply  \eqref{eq:IBPformulawithtime}:  We get, e.g.,
\begin{equation*}
\begin{split}
\int_{\R^d} \int_{-R}^R \sgn(\xi) A'(\xi) g_x[\chi_\epsilon] \, dx \, d\xi
& =  \frac{1}{2} \int_{\R^{2d}} \int_{-R}^R \left\{\sgn(\xi) A'(\xi)- \sgn(\xi) A'(\xi)\right\} \\
& \quad \cdot \left\{\chi_\epsilon(t,x,\xi)-\chi_\epsilon(t,y,\xi)\right\} \mu(x-y) \, dx \, dy \, d\xi,
\end{split}
\end{equation*}
which indeed equals zero. Similarly, we use Lemma~\ref{lem:accurate} and the Fubini theorem to show that
$$
\int_{\R^d} \int_{-R}^R \sgn(\xi) F'(\xi) \nabla_x \chi_\epsilon \, dx \, d\xi=0,
$$see also \cite{Per09}. 
  Let us now integrate the right-hand side of \eqref{first-der-accurate} in $\xi$ first. The regularity in Lemma \ref{lem:pointwise-control}\eqref{newii} justifies that for any $t \geq 0$ and almost every $x \in \R^d$,
\begin{equation*}
\int_{-R}^R \sgn(\xi) \partial_\xi (m_{\epsilon}+n_{\epsilon})(t,x,\xi) \, d\xi=
-2(m_{\epsilon}+n_{\epsilon})(t,x,0)+\sum_{\pm} (m_{\epsilon}+n_{\epsilon})(t,x,\pm R).
\end{equation*}
Using Lemma \ref{lem:pointwise-control}\eqref{cond-infinity-eps} to bound the terms in $\pm R$ after the integration in $x$, we conclude that for any $t \geq 0$ and $R >0$, 
\begin{equation}
\label{first-der-accurate-bis}
\int_{\R^d} \int_{-R}^R  \sgn(\xi) \partial_{t}\chi_{\epsilon} \, dx \, d\xi=-2\int_{\R^d} (m_{\epsilon}+n_{\epsilon})(t,x,0) \, dx+o_R(1),
\end{equation}
where $o_R(1) \to 0$ as $R \to \infty$.  Note that $o_R(1)$ depends on $\epsilon$ but we do not need to care about it since $\epsilon$ is fixed up to the end. 

The computation in \eqref{first-der-accurate-bis} will serve us to bound the limiting right-hand side of   \eqref{rigorous-time-derivative}.   Let us leave it aside for a while and do another computation that will be needed. 
Now let us 
multiply \eqref{Convolution-Equation-in-khi(u)-b} by $\tilde{\chi}_\epsilon$ and integrate as before. We get
\begin{equation}
\label{second-der-accurate}
\begin{split}
& \int_{\R^d} \int_{-R}^R  \tilde{\chi}_{\epsilon} \partial_{t}\chi_{\epsilon} \, dx \, d\xi \\
& =\int_{\R^d} \int_{-R}^R \tilde{\chi}_{\epsilon}  \partial_\xi (m_{\epsilon}+n_{\epsilon}) \, dx \, d\xi
-\int_{\R^d} \int_{-R}^R A'(\xi) \tilde{\chi}_{\epsilon} g_x[\chi_\epsilon] \, dx \, d\xi
+\mathcal{R}(u,\tilde{u})\\
& =:I_{R}(t)
-J_{R}(t)-\mathcal{R}(u,\tilde{u}),
\end{split}
\end{equation}
where $
\mathcal{R}(u,\tilde{u}):=-\int_{\R^d} \int_{-R}^R F'(\xi) {\tilde{\chi}}_{\epsilon} \nabla_x \chi_\epsilon \, dx \, d\xi.
$
 To compute the first integral, 
we write that 
\begin{equation*}
I_{R}(t) = \int_0^\infty \int_{\R^{2d}} \Bigg( \int_{-R}^R \tilde{\chi}(t+\tau,x+\theta,\xi) \partial_\xi (m_{\epsilon}+n_{\epsilon})(t,x,\xi) \, d\xi \Bigg) 
\rho_\epsilon 
(-\tau,-\theta)  \, d\tau \, dx \, d\theta
\end{equation*}
 where  we first integrate in $\xi$.  Recalling  the definition of
$$
\tilde{\chi}(t+\tau,x+\theta,\xi)=\chi(\xi;\tilde{u}(t+\tau,x+\theta))
$$given in \eqref{def-chi}
 and having in mind \eqref{fundamental-theo},  we get that
\begin{equation}\label{detail}
\begin{split}
I_{R}(t) & =
\int_0^\infty \int_{\R^{2d}} (m_{\epsilon}+n_{\epsilon})(t,x,T_R(\tilde{u}(t+\tau,x+\theta))) 
\rho_\epsilon 
(-\tau,-\theta) \, d\tau \, dx \, d\theta \\
& \quad -\int_{\R^d} (m_{\epsilon}+n_{\epsilon})(t,x,0) \, dx
\end{split}
\end{equation}
with the truncation function $T_R(\cdot)$ defined in \eqref{def-truncature}.  
In this reasoning, the use of \eqref{fundamental-theo} is justified by Lemma \ref{lem:pointwise-control}\eqref{newii}. We can also split the right-hand side of \eqref{detail} in two integrals since the last integral is finite by \eqref{cond-infinity-eps} of the same lemma, which implies that the first one is finite as well. Applying now the item \eqref{pointwise-dissipation},  we deduce that
\begin{equation}\label{complicate-writting}
\begin{split}
I_{R}(t) & \geq I-\int_{\R^d} (m_{\epsilon}+n_{\epsilon})(t,x,0) \, dx,
\end{split}
\end{equation}
\begin{equation*}
\begin{split}
I &:= \int_0^\infty \int_0^\infty \int_{\R^{4d}} \left|A(u(t+s,x+z+\eta))-A(T_R(\tilde{u}(t+\tau,x+\theta)))\right|\\
& \quad \cdot \Char_{T_R(\tilde{u}(t+\tau,x+\theta)) \in \conv\{u(t+s,x+\eta),u(t+s,x+z+\eta)\}}  \\
& \quad \cdot
\rho_\epsilon 
(-s,-\eta) 
\rho_\epsilon 
(-\tau,-\theta) \mu(z) \, ds \, d\tau \, dx\, d\eta \, d\theta \, dz.
\end{split}
\end{equation*}
Let us rewrite $I$ with  the pushforward measure in  \eqref{change-variable}, which amounts  to  change the variables by $(x,x+z) \mapsto (x,y)$. 
We get
\begin{equation*}
\begin{split}
I & = \int_0^\infty \int_0^\infty \int_{\R^{4d}} \left|A(u(t+s,y+\eta))-A(T_R(\tilde{u}(t+\tau,x+\theta)))\right|\\
& \quad \cdot \Char_{T_R(\tilde{u}(t+\tau,x+\theta)) \in \conv\{u(t+s,x+\eta),u(t+s,y+\eta)\}}  \\
& \quad \cdot
\rho_\epsilon 
(-s,-\eta) 
\rho_\epsilon 
(-\tau,-\theta) \mu(x-y) \, ds \, d\tau \, dx \, dy \, d\eta \, d\theta\\
& = \int_0^\infty \int_0^\infty \int_{\R^{4d}} \left|A(b)-A(T_R(c))\right| \Char_{T_R(c) \in \conv\{a,b\}}  \\
& \quad \cdot
\rho_\epsilon 
(-s,-\eta) 
\rho_\epsilon 
(-\tau,-\theta) \mu(x-y) \, ds \, d\tau \, dx \, dy \, d\eta \, d\theta,
\end{split}
\end{equation*}
with the convenient notation
\begin{equation}\label{convenient-notation}
\begin{cases}
a:=u(t+s,x+\eta),\\
b:=u(t+s,y+\eta),\\
c:=\tilde{u}(t+\tau,x+\theta),\\
d:=\tilde{u}(t+\tau,y+\theta);
\end{cases}
\end{equation}
note that $d$ will appear later when doing the same computations for $\tilde{u}$. Using in addition \eqref{monotonicity}, we deduce from \eqref{complicate-writting} that
\begin{equation}\label{accurate-first-term}
\begin{split}
I_{R}(t) & \geq
\int_0^\infty \int_0^\infty \int_{\R^{4d}} \left|A(T_R(b))-A(T_R(c))\right| \Char_{T_R(c) \in \conv\{T_R(a),T_R(b)\}}  \\
& \quad \cdot
\rho_\epsilon 
(-s,-\eta) 
\rho_\epsilon 
(-\tau,-\theta) \mu(x-y) \, ds \, d\tau \, dx \, dy \, d\eta \, d\theta\\
& \quad -\int_{\R^d} (m_{\epsilon}+n_{\epsilon})(t,x,0) \, dx.
\end{split}
\end{equation}
Let us now focus on the second integral in \eqref{second-der-accurate}. 
 Let us integrate it by parts as in \eqref{eq:IBPformulawithtime} which is again justified by Lemma \ref{lem:accurate}. We get 
\begin{equation*}
\begin{split}
J_R(t) & = \frac{1}{2} \int_{\R^{2d}} \int_{-R}^R A'(\xi) \left\{\chi_\epsilon(t,x,\xi)-\chi_\epsilon(t,y,\xi)\right\} \left\{\tilde{\chi_\epsilon}(t,x,\xi)-\tilde{\chi_\epsilon}(t,y,\xi)\right\}  \\
& \quad \cdot \mu(x-y) \, dx \, dy \, d\xi.
\end{split}
\end{equation*}
After writing the formula of the convolution products, $\chi_\epsilon=\chi \ast 
\rho_\epsilon 
$ and $\tilde{\chi}_\epsilon=\tilde{\chi} \ast 
\rho_\epsilon 
$, and using the convenient notation \eqref{convenient-notation}, we obtain that
\begin{equation*}
\begin{split}
J_R(t) & = \frac{1}{2} \int_0^\infty \int_0^\infty \int_{\R^{4d}} \int_{-R}^R A'(\xi) \left\{\chi(\xi;a)-\chi(\xi;b)\right\}  \left\{\chi(\xi;c)-\chi(\xi;d)\right\}  \\
& \quad \cdot 
\rho_\epsilon 
(-s,-\eta) 
\rho_\epsilon 
(-\tau,-\theta) \mu(x-y) \, ds \, d\tau \, dx \, dy \, d\eta \, d\theta \, d\xi.
\end{split}
\end{equation*}
Applying \eqref{monotonicity-bis}, we infer that
\begin{equation*}
\begin{split}
& J_R(t) \\
& = \frac{1}{2} \int_0^\infty \int_0^\infty  \int_{\R^{4d}} \int_{\R} A'(\xi) \left\{\chi(\xi;T_R(a))-\chi(\xi;T_R(b))\right\}
\left\{\chi(\xi;T_R(c))-\chi(\xi;T_R(d))\right\} \\
 & \quad \cdot 
 \rho_\epsilon 
 (-s,-\eta) 
 \rho_\epsilon 
 (-\tau,-\theta) \mu(x-y) \, ds \, d\tau \, dx \, dy \, d\eta \, d\theta \, d\xi.
\end{split}
\end{equation*}
Now substracting twice \eqref{second-der-accurate} to \eqref{first-der-accurate-bis}, while taking into account the preceding lower bound \eqref{accurate-first-term} of $I_R(t)$, we finally deduce that
\begin{multline}\label{accurate-final}
\int_{\R^{d}}  \int_{-R}^R \left(\sgn(\xi) \partial_{t}\chi_{\epsilon}-2\tilde{\chi}_\epsilon \partial_t \chi_\epsilon \right) dx \, d\xi
\leq
{2} \mathcal{R}(u,\tilde{u})+o_R(1)\\
+ \int_0^\infty \int_0^\infty \int_{\R^{4d}}  P_k\,
\rho_\epsilon 
(-s,-\eta) 
\rho_\epsilon 
(-\tau,-\theta) \mu(x-y) \, ds \, d\tau \, dx \, dy \, d\eta \, d\theta,
\end{multline}
\begin{equation*}
\begin{split}
P_k  & := \int_{\R} A'(\xi)\left\{\chi(\xi;T_R(a))-\chi(\xi;T_R(b))\right\}
\left\{\chi(\xi;T_R(c))-\chi(\xi;T_R(d))\right\} d\xi\\
& \quad -{2} \left|A(T_R(b))-A(T_R(c))\right| \Char_{T_R(c) \in \conv\{T_R(a),T_R(b)\}}.
\end{split}
\end{equation*}
Inverting the roles of $u$ and $\tilde{u}$, we get a similar estimate of the form
\begin{multline}\label{accurate-final-bis}
\int_{\R^{d}} \int_{-R}^R \left(\sgn(\xi) \partial_{t}\tilde{\chi}_{\epsilon}-2\chi_\epsilon \partial_t \tilde{\chi}_\epsilon \right) dx \, d\xi
\leq {2} \mathcal{R}(\tilde{u},u)+o_R(1)\\
+ \int_0^\infty \int_0^\infty \int_{\R^{4d}} \tilde{P}_k\, 
\rho_\epsilon 
(-s,-\eta) 
\rho_\epsilon 
(-\tau,-\theta) \mu(x-y) \, ds \, d\tau \, dx \, dy \, d\eta \, d\theta,
\end{multline}
\begin{equation*}
\begin{split}
\tilde{P}_k &:= \int_{\R} A'(\xi)\left\{\chi(\xi;T_R(a))-\chi(\xi;T_R(b))\right\}
\left\{\chi(\xi;T_R(c))-\chi(\xi;T_R(d))\right\} d\xi\\
& \quad -{2}  \left|A(T_R(d))-A(T_R(a))\right| \Char_{T_R(a) \in \conv\{T_R(c),T_R(d)\}}.
\end{split}
\end{equation*}
We again recognize the $F$-term and $G$-term of Lemma \ref{Lemma-F(a,b,c,d)-leq_G(a,b,c,d)} if adding \eqref{accurate-final-bis} to \eqref{accurate-final}, more precisely
$$
P_k+\tilde{P}_k
={2} F(T_R(a),T_R(b),T_R(c),T_R(d))-{2} G(T_R(a),T_R(b),T_R(c),T_R(d))
$$which is nonnegative.  Injecting the sum of \eqref{accurate-final} and \eqref{accurate-final-bis} into \eqref{rigorous-time-derivative} then  implies   
that for any $t \geq 0$, 
\begin{equation*}
\dot{\mathcal{M}}^\epsilon(t) 
\leq \lim_{R\to \infty} \left\{{2} \mathcal{R}(u,\tilde{u})+{2} \mathcal{R}(\tilde{u},u)+o_R(1)\right\}
\end{equation*}
 where $o_R(1) \to 0$ as $R \to \infty$ and
$$
\mathcal{R}(u,\tilde{u})+\mathcal{R}(\tilde{u},u)=\int_{\R^d} \int_{-R}^R F'(\xi) \nabla_x \left[{\tilde{\chi}}_{\epsilon} \chi_\epsilon \right] dx \, d\xi=0,
$$thanks to  an integration by parts in $x$ justified by  Lemma \ref{lem:accurate}; see also \cite{Per09}.  We get that $\dot{\mathcal{M}}^\epsilon(t)  \leq 0$ and complete the proof.
\end{proof}

\section{Existence of kinetic solutions}

 Let us now prove the existence part in Theorem \ref{thm:wp-kinetic} that is to say the result below. The complete proof of Theorem \ref{thm:wp-kinetic} is given just after. 

\begin{theo}\label{thm:existence}
Let~$u_0 \in L^1(\R^d)$ and assume \eqref{condition-on-F}--\eqref{condition-symmetry}.
Then there
exists at least a kinetic solution~$u$ of ~\eqref{cauchy-problem} which belongs to $C([0,\infty);L^1(\R^d))$.
\end{theo}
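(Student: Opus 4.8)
The plan is to construct a kinetic solution by approximation, combining the well-posedness of entropy solutions for bounded data (Theorem~\ref{thm:wp-entropy}), the equivalence of Theorem~\ref{thm:equivalence}, and the $L^1$ contraction principle of Theorem~\ref{uniqueness-theorem}. First I would truncate the datum, setting $u_{0,k}:=T_k(u_0)$ with $T_k$ as in~\eqref{def-truncature}, so that $u_{0,k}\in L^1\cap L^\infty(\R^d)$ and $u_{0,k}\to u_0$ in $L^1(\R^d)$. By Theorem~\ref{thm:wp-entropy} there is a unique entropy solution $u_k$ with datum $u_{0,k}$, which by Theorem~\ref{thm:equivalence} is a kinetic solution; denote by $m_k$ and $n_k$ the associated entropy defect and nonlocal dissipation measures. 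The contraction principle gives $\|u_k(t,\cdot)-u_l(t,\cdot)\|_{L^1(\R^d)}\le\|u_{0,k}-u_{0,l}\|_{L^1(\R^d)}$ for a.e.\ $t$, so $(u_k)$ is Cauchy in $C([0,\infty);L^1(\R^d))$ and converges to some $u\in C([0,\infty);L^1(\R^d))$ with $u(0,\cdot)=u_0$. Along a subsequence I may also assume $u_k\to u$ a.e.\ on $\R^+\times\R^d$, whence $\chi(\xi;u_k)\to\chi(\xi;u)$ in $L^1_{\textup{loc}}$ by the isometry of Remark~\ref{isometry}.

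Next I would obtain uniform bounds on the dissipation measures. Since $T_k$ contracts towards zero, a direct computation shows that the functions $\nu_k(\xi):=\|(u_{0,k}-\xi)^+\mathbf{1}_{\xi>0}\|_{L^1(\R^d)}+\|(u_{0,k}-\xi)^-\mathbf{1}_{\xi<0}\|_{L^1(\R^d)}$ appearing in~\eqref{cond-infinity} satisfy $\nu_k\le\nu$, where $\nu$ is built from $u_0$ in the same way; in particular $\int_{-R}^{R}\int_0^\infty\int_{\R^d}(m_k+n_k)\,dt\,dx\,d\xi\le\int_{-R}^{R}\nu\,d\xi<\infty$ uniformly in $k$, for every $R>0$. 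Thus $(m_k+n_k)$ has locally uniformly bounded mass and, after a further subsequence and a diagonal extraction in $R$, converges weakly-$*$ in $M^1_{\textup{loc}}([0,\infty)\times\R^{d+1})$ to a nonnegative measure $\pi$. Feeding these convergences into the kinetic formulation~\eqref{rkf} for $u_k$ — whose linear terms converge because $F'(\xi)$ and $A'(\xi)$ are bounded on the compact $\xi$-support of the test function, $g_x[\varphi]$ is bounded by Lemma~\ref{lem:well-def-g}, and $\chi(\xi;u_{0,k})\to\chi(\xi;u_0)$ in $L^1$ — shows that $u$ satisfies~\eqref{rkf} with $m_k+n_k$ replaced by its weak-$*$ limit $\pi$.

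It then remains to split $\pi=m+n$, with $n$ the nonlocal dissipation~\eqref{def-nonloc-dissip} attached to the limit $u$ and $m\ge0$, and this identification is the main obstacle. The key is a lower semicontinuity property of $n$: since $u_k\to u$ a.e., for a.e.\ fixed $z$ the integrand defining $n_k$ converges a.e.\ in $(t,x,\xi)$ to that of $n$ (the endpoints of $\conv\{\cdot,\cdot\}$, where $\Char$ takes the value $\tfrac12$, forming a null set in $\xi$), so two applications of Fatou's lemma give $\liminf_k n_k\ge n$ pointwise a.e. Combining this with $m_k\ge0$ and the weak-$*$ convergence, one gets, for every nonnegative $\phi\in\mathcal{D}([0,\infty)\times\R^{d+1})$,
$$
\int\pi\,\phi=\lim_k\int(m_k+n_k)\phi\;\ge\;\liminf_k\int n_k\phi\;\ge\;\int n\,\phi,
$$
i.e.\ $\pi\ge n$ as measures; hence $m:=\pi-n$ is a nonnegative element of $M^1_{\textup{loc}}$. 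The datum $u$ then satisfies~\eqref{rkf} with this $m$ and $n$, and the bound~\eqref{cond-infinity} passes to the limit from $\nu_k\le\nu$ in the weak sense~\eqref{nu-weak-sense}, so $u$ is a kinetic solution in the sense of Definition~\ref{defi-kinetic}; its membership in $C([0,\infty);L^1(\R^d))$ with $u(0,\cdot)=u_0$ is inherited from the uniform limit (and the structural conditions~\eqref{kds} can alternatively be read off through Proposition~\ref{continuity-in-time}). The delicate point throughout is precisely the passage to the limit in the genuinely nonlinear dissipation $n_k$, handled here by lower semicontinuity rather than strong convergence, since $A$ is only locally Lipschitz and the solutions are unbounded.
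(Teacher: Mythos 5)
Your proof is correct and is essentially the paper's own argument: truncation $u_0^k=T_k(u_0)$, entropy well-posedness (Theorem~\ref{thm:wp-entropy}) plus the equivalence of Theorem~\ref{thm:equivalence}, the $L^1$ contraction to produce a limit $u\in C([0,\infty);L^1(\R^d))$, the uniform bound $\nu_k\le\nu$ coming from $(T_k(u_0)-\xi)^{\pm}\mathbf{1}_{\pm\xi>0}\le(u_0-\xi)^{\pm}\mathbf{1}_{\pm\xi>0}$, weak-$*$ compactness of $(m_k+n_k)$, and a Fatou-type lower semicontinuity argument yielding $\pi\ge n$, so that $m:=\pi-n\ge 0$. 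One point to tighten: in your pointwise claim $\liminf_k n_k\ge n$, the inner integration is against $\mu(z)\,dz$, which need not be absolutely continuous with respect to Lebesgue measure, so passing from ``for each fixed $z$, convergence holds for a.e.\ $(t,x,\xi)$'' to ``for a.e.\ $(t,x,\xi)$, convergence holds for $\mu$-a.e.\ $z$'' requires a Tonelli argument on the $\sigma$-finite product measure $dt\,dx\,d\xi\otimes\mu(z)\,dz$; the paper sidesteps this quantifier exchange by applying Fatou first in $(t,x,\xi)$ at fixed $z$ (against the test function) and only afterwards in $z$.
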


Theorem \ref{thm:existence} can be proven from the kinetic approach without relying on entropy solutions, in the spirit of \cite{Per09}.
However, in order to shorten the paper we will use the known existence result for entropy solutions \cite{CiJa11}. 


\begin{proof}[Proof of Theorem~\ref{thm:existence}]
Let us define $u_0^k:=T_k(u_0) \in L^1 \cap L^\infty(\R^d)$, with the truncation function of \eqref{def-truncature}. We have $u_0^k \to u_0$ in $L^1(\R^d)$ as $k  \to \infty$. Let $u_k$ be the associated entropy solutions given by Theorem \ref{thm:wp-entropy}.
 By the $L^1$ contraction principle,
$$
\|u_k-u_p\|_{C([0,T];L^1(\R^d))} \leq \|u_0^k -u_0^p\|_{L^1(\R^d)},
$$for any $T \geq 0$ and integers $k,p$. This Cauchy sequence thus converges towards some function $u \in C([0,\infty);L^1(\R^d))$ in $C([0,T];L^1(\R^d))$, for any $T>0$, and almost everywhere in $\R^+ \times \R^d$ (up to some subsequence). We will show that this $u$ is the desired kinetic solution.

By  Theorem \ref{thm:equivalence},  each $u_k$ is a kinetic solution with some measure $m_k$ satisfying the estimate of Lemma \ref{lem:cond-infinity}. Since 
$$
(T_k(u_0(x))-\xi)^\pm \mathbf{1}_{\pm \xi  >    0} \leq (u_0(x)-\xi)^\pm \mathbf{1}_{\pm \xi  >   0},
$$we deduce that for all integer $k$ and almost any $\xi \in \R$,
\begin{equation}\label{cond-infinity-last}
\int_0^\infty \int_{\R^{d}} (m_k+n_k)(t,x,\xi) \, dt \, dx  \leq  \nu(\xi),
\end{equation}
with the same fixed $\nu(\xi)=\|(u_0-\xi)^+ \mathbf{1}_{\xi  >   0}\|_{L^1(\R^d)}+\|(u_0-\xi)^- \mathbf{1}_{\xi  <   0}\|_{L^1(\R^d)}$. By the weak compactness of measures, there is some $q \in M^1_{\rm loc}([0,\infty) \times {\R^{d+1}})$ such that
$$
\int_{0}^\infty \int_{\R^{d+1}} (m_k+n_k)(t,x,\xi) \varphi(t,x,\xi)  \, dt \, dx \, d\xi \to \int_{0}^\infty \int_{\R^{d+1}} q(t,x,\xi) \varphi(t,x,\xi)  \, dt \, dx \, d\xi,
$$for any $\varphi \in C_c(\R^{d+2})$ (and up to another subsequence if necessary). This is sufficient to pass to the limit in \eqref{rkf} since $\chi(\xi;u_k) \to \chi(\xi;u)$ in $C([0,T];L^1(\R^{d+1}))$ for any $T>0$. But, we get the measure $q$ instead of $m+n$ at the right-hand side. Nevertheless, we can rewrite $q$ as $m+n$, for some nonnegative measure $m$, if we can prove that $q \geq n$. Let us do so.  For any $0 \leq \varphi \in C_c(\R^{d+2})$, 
\begin{equation}\label{lower-bound-q}
{\int_0^\infty \int_{\R^{d+1}} (m_k+n_k) \varphi  \, dt \, dx \, d\xi \geq \int_0^\infty \int_{\R^{d+1}} n_k \varphi  \, dt \, dx \, d\xi = \int_{\R^d} I_k(z) \mu(z) \, dz},
\end{equation}
\begin{equation*}
{I_k(z):=
\int_0^\infty \int_{\R^{d+1}} \underbrace{|A(u_k(t,x+z))-A(\xi)| \Char_{\conv \{u_k(t,x),u_k(t,x+z)\}}(\xi)}_{=:Q_k(t,x,z,\xi)} \varphi(t,x,\xi) \, dt \, dx \, d\xi.}
\end{equation*}
 Recall that $u_k(t,x) \to u(t,x)$ almost everywhere, let us say for $(t,x) \notin N$ with $N \subset \R^+ \times \R^d$ negligible.  For any $z \in \R^d$, we thus have $u_k(t,x+z) \to u(t,x+z)$ for any $(t,x)$ not in the negligible $N-(0,z)$. Hence 
$$
\lim_{k \to \infty} Q_k(t,x,z,\xi) = \underbrace{|A(u(t,x+z))-A(\xi)| \Char_{\conv \{u(t,x),u(t,x+z)\}}(\xi)}_{=:Q(t,x,z,\xi)},
$$for any $(t,x,\xi) \in \R^+ \times \R^{d+1}$ such that $(t,x) \notin N \cup (N-(0,z))$  and $\xi \neq u(t,x)$. We recognize the complementary of the graph of $u$;
the latter has zero Lebesgue measure in $\R^+ \times \R^{d+1}$.  Fatou's lemma then implies that 
$$
\liminf_{k \to \infty} I_k(z) \geq \int_0^\infty \int_{\R^{d+1}} Q(t,x,z,\xi)\varphi(t,x,\xi)\, dt \, dx \, d\xi \quad \forall z \in \R^d.
$$Applying again Fatou's lemma to the right-hand side of \eqref{lower-bound-q}, we obtain $q \geq n$.   We thus have reached all the conditions required in Definition \ref{defi-kinetic} excepted \eqref{cond-infinity},  but the latter is immediate from \eqref{cond-infinity-last}.  
\end{proof}



\begin{proof}[Proof of Theorem \ref{thm:wp-kinetic}]
It  only  remains to check the uniqueness of $m$. Note that the equation in \eqref{kds} determines $m$ for $t>0$, since $u$ is unique. But, the last condition of \eqref{kds}  implies moreover that $m(\{t=0\})=0$. This completes the  proof. 
\end{proof}

\appendix


\section{
%
Proofs of Lemmas \ref{lem:well-def-g}, \ref{lem:bf} and \ref{cor:bf}}\label{appendix:tech}

\begin{proof}[Proof of Lemma \ref{lem:well-def-g}]
The boundedness of $g$ follows from the formula
\begin{equation}\label{fbg}
g[f](x)
= \int_{|z|>r} (f(x+z)-f(x)) \mu(z) \, dz
+\int_{|z| \leq r} \int_0^1 (1-\tau) \nabla^2 f (x+\tau z) z^2 \mu(z) \, dz \, d\tau.
\end{equation}
The integration by parts formula \eqref{eq:IBPformula} follows, as $r\downarrow 0$, from the cutting
\begin{equation*}
\begin{split}
\int_{|z|>r} (f(x+z)-f(x))\varphi(x)  \mu(z) \, dx \, dz & = \int_{|z|>r}  f(x+z) \varphi(x) \, dx \, dz\\
& \quad -\int_{|z|>r}  f(x) \varphi(x) \mu(z) \, dx \, dz
\end{split}
\end{equation*}
and the changes of variables $x+z \mapsto x$ and $-z \mapsto z$ in the first integral.
\end{proof}

\begin{proof}[Proof of Lemma \ref{lem:bf}]
We have
\begin{equation*}
\begin{split}
I_r & :=-\int_{|z|>r} (f(x+z)-f(x) ) \tilde{f}(x)  \mu(z) \, dx \, dz \\
& = \int_{|z|>r} f(x) \tilde{f}(x) \mu(z) \, dx \, dz-\int_{|z|>r} f(x+z) \tilde{f}(x) \mu(z) \, dx \, dz\\
& = \int_{|x-y|>r} (f(x)-f(y)) \tilde{f}(x) \mu(x-y) \, dx \, dy,
\end{split}
\end{equation*}
after having rewritten both the preceding integrals with the help of \eqref{change-variable}. We can exchange the roles of $x$ and $y$ thus getting also
$$
I_r=\int_{|x-y|>r} \underbrace{(f(y)-f(x)) \mu(y-x)}_{=-(f(x)-f(y)) \mu(x-y)} \tilde{f}(y) \, dx \, dy.
$$Applying each of these formulas to half of $I_r$, we get
$$
I_r=\frac{1}{2}\int_{|x-y|>r} (f(x)-f(y)) (\tilde{f}(x)-\tilde{f}(y))  \mu(x-y) \, dx \, dy.
$$Now we conclude by passing to the limit as $r \da 0$. This is justified since
\begin{equation*}
\begin{split}
& \int_{\R^{2d}} |f(x)-f(y)| |\tilde{f}(x)-\tilde{f}(y)|  \mu(x-y) \, dx \, dy\\
& = \int_{\R^{2d}} |f(x)-f(x+z)| |\tilde{f}(x)-\tilde{f}(x+z)|  \mu(z) \, dx \, dz \quad \mbox{(by \eqref{change-variable})}\\
& \leq \|f\|_{W^{1,1}(\R^d)} \|\tilde{f}\|_{W^{1,\infty}(\R^d)} \int (|z| \wedge 2)   (|z| \wedge 2) \mu(z) \, dz,\\
\end{split}
\end{equation*}
which is finite by \eqref{condition-on-mu}.
\end{proof}

\begin{proof}[Proof of Lemma~\ref{cor:bf}]
Use Lemma \ref{lem:well-def-g} to show that $g_x[f] \in C([0,\infty);L^1(\R^{d+1}))$ and Lemma \ref{lem:bf} for the formula \eqref{eq:IBPformulawithtime}.
\end{proof}

\section{Proof of Proposition \ref{continuity-in-time}
}\label{appendix:prop}


 We follow the guidelines of \cite{ChPe03,Per09}. Let us first give technical results. 

\begin{lem}\label{Lemma-for-g_x[psi]-bis}
Let $\phi,\varphi \in C^\infty_b(\R^d)$ and $\phi_M(x):=\phi(x/M)$. Then $g[\phi_M \varphi] \to \phi(0) g[\varphi]$ pointwise as $M\to \infty$ while being bounded uniformly in large $M$. In particular $g[\phi_M] \to 0$ pointwise.
\end{lem}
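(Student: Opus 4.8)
The plan is to exploit the Lévy form \eqref{Levy-form} with a fixed truncation radius, say $r=1$, and to pass to the limit separately on the small- and large-jump regions by dominated convergence. Write $\psi_M := \phi_M\varphi$; since $\phi,\varphi\in C^\infty_b(\R^d)$ and $\phi_M(x)=\phi(x/M)$, the product $\psi_M$ lies in $C^2_b(\R^d)$ with derivatives bounded uniformly in $M\geq 1$, so $g[\psi_M]$ is well defined by Lemma \ref{lem:well-def-g}. Using \eqref{Levy-form} with $r=1$, I would split
\begin{equation*}
g[\psi_M](x) = -\int_{|z|>1}\big(\psi_M(x+z)-\psi_M(x)\big)\mu(z)\,dz - \int_{|z|\leq 1}\big(\psi_M(x+z)-\psi_M(x)-z\cdot\nabla\psi_M(x)\big)\mu(z)\,dz,
\end{equation*}
and record the analogous decomposition for $\varphi$ itself, whose $\phi(0)$-multiple is the target limit.

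For the large-jump part, fix $x$ and observe that for each $z$ one has $\phi((x+z)/M)\to\phi(0)$ and $\phi(x/M)\to\phi(0)$, so the integrand converges pointwise to $-\phi(0)\big(\varphi(x+z)-\varphi(x)\big)\mu(z)$; it is dominated by $2\|\phi\|_\infty\|\varphi\|_\infty\,\mu(z)\mathbf{1}_{|z|>1}$, which is integrable since $\int_{|z|>1}\mu<\infty$ by \eqref{condition-on-mu}. For the small-jump part I would use the second-order Taylor remainder $\int_0^1(1-\tau)\nabla^2\psi_M(x+\tau z)[z,z]\,d\tau$ together with the bound $\|\nabla^2\psi_M\|_\infty \leq \|\phi\|_\infty\|\nabla^2\varphi\|_\infty + 2\|\nabla\phi_M\|_\infty\|\nabla\varphi\|_\infty + \|\varphi\|_\infty\|\nabla^2\phi_M\|_\infty$ and the scalings $\|\nabla\phi_M\|_\infty = M^{-1}\|\nabla\phi\|_\infty$, $\|\nabla^2\phi_M\|_\infty = M^{-2}\|\nabla^2\phi\|_\infty$. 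The last two contributions vanish as $M\to\infty$, while $\phi_M(x+\tau z)\to\phi(0)$ for $|z|\leq 1$, so the integrand converges pointwise to $\phi(0)$ times the Taylor remainder of $\varphi$, and is dominated by $C\,|z|^2$ with $\int_{|z|\leq 1}|z|^2\mu<\infty$, again by \eqref{condition-on-mu}. Dominated convergence on both regions then yields $g[\psi_M](x)\to\phi(0)\,g[\varphi](x)$.

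The same two estimates give the uniform-in-$M$ control: the large-jump term is bounded by $2\|\phi\|_\infty\|\varphi\|_\infty\int_{|z|>1}\mu$ and the small-jump term by $\tfrac12\big(\sup_{M\geq 1}\|\nabla^2\psi_M\|_\infty\big)\int_{|z|\leq 1}|z|^2\mu$, both finite and independent of $M$. Finally, taking $\varphi\equiv 1$ (which lies in $C^\infty_b$ and satisfies $g[1]=0$) gives $g[\phi_M]\to\phi(0)\,g[1]=0$. The only genuinely delicate point is the large-jump region, where no smallness is available and one must rely solely on the $L^\infty$ bound on $\psi_M$ combined with the finiteness of $\mu$ away from the origin; the small-jump region is routine once the curvature of $\phi_M$ is seen to decay like $M^{-2}$.
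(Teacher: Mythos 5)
Your proof is correct and takes essentially the same route as the paper: both rest on the small/large-jump splitting of $g$ (the paper via \eqref{fbg}, you via \eqref{Levy-form}), dominated convergence on $\{|z|>r\}$ using $\int_{|z|>1}\mu<\infty$, and the uniform-in-$M$ bound on $\nabla^2(\phi_M\varphi)$ played against $\int_{|z|\le 1}|z|^2\mu<\infty$. The only difference is procedural: the paper estimates $g[\phi_M\varphi-\phi(0)\varphi]$ with the truncation radius $r$ left free and concludes by the iterated limits $M\to\infty$ then $r\downarrow 0$, so it never has to identify the limit of the small-jump integral but only bound it by $C\int_{|z|\le r}|z|^2\mu$, whereas you fix $r=1$ and identify that limit as well, which is why you need the extra (correct) observations $\|\nabla\phi_M\|_\infty=O(M^{-1})$ and $\|\nabla^2\phi_M\|_\infty=O(M^{-2})$; both versions are sound.
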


\begin{proof}
The uniform bound in large $M$ holds since $g:C_b^2(\R^d) \to C_b(\R^d)$ is bounded.
For the convergence, use \eqref{fbg} to write 
\begin{multline*}
\left|g[\phi_M \varphi-\phi(0)\varphi](x)\right| \\ \leq
\int_{|z|>r} \left|(\phi_M \varphi-\phi(0)\varphi)(x+z)-(\phi_M \varphi-\phi(0)\varphi)(x) \right| \mu(z) \, dz+C \int_{|z| \leq r} z^2 \mu(z) \, dz
\end{multline*}
with $C$ independent of $r>0$ and large $M$, and let then $M \to \infty$ and $r \da 0$ successively.
\end{proof}


We can then use test functions as below.

\begin{lem}
Assume \eqref{condition-on-F}--\eqref{condition-symmetry} and $u \in L^\infty(\R^+;L^1(\R^d))$ is a kinetic solution of \eqref{cauchy-problem}  with initial data $u_0 \in L^1(\R^d)$. Then, for any Lebesgue point $T$ of the locally integrable  function $t \in \R^+ \mapsto u(t) \in L^1$ and any $\varphi \in C_b^\infty(\R^d \times \R_\xi)$ compactly supported in $\xi$, 
\begin{equation}\label{keycont}
\begin{split}
& \int_{\R^{d+1}} \chi(\xi;u_0(x)) \varphi(x,\xi) \, dx  \, d\xi +\int_0^{T}\int_{\R^{d+1}} \chi(\xi;u) \left(F'(\xi)
  \cdot \nabla_x\varphi - A'(\xi)
  g_x[\varphi]\right) dt \,dx\,d\xi\\
  & =\int_{\R^{d+1}}\chi(\xi;u(T,x))\varphi(x,\xi) \, dx\, d\xi +\int_0^{T}\int_{\R^{d+1}}(m+n)(t,x,\xi)\partial_\xi
   \varphi(x,\xi)\, dt\,dx\,d\xi.
\end{split}
\end{equation}
\end{lem}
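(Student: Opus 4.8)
The plan is to test the kinetic formulation \eqref{rkf} against separated-variable functions of the form $\Phi(t,x,\xi):=\psi(t)\,\phi_M(x)\,\varphi(x,\xi)$, where $\phi\in\mathcal{D}(\R^d)$ is a fixed cutoff with $\phi(0)=1$, $\phi_M(x):=\phi(x/M)$, and $\psi\in\mathcal{D}(\R)$. Each such $\Phi$ is smooth and compactly supported in $(t,x,\xi)$, hence admissible in Definition \ref{defi-kinetic}. Since $\partial_t\Phi=\psi'\phi_M\varphi$, $\nabla_x\Phi=\psi\,(M^{-1}(\nabla\phi)(x/M)\varphi+\phi_M\nabla_x\varphi)$ and $\partial_\xi\Phi=\psi\,\phi_M\,\partial_\xi\varphi$, equation \eqref{rkf} becomes an identity relating these four groups of terms; I then pass to the limit first in $M$ and afterwards in $\psi$.

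First I would let $M\to\infty$ with $\psi$ fixed. Recall from Remark \ref{isometry} that $\chi(\xi;u)\in L^\infty(\R^+;L^1(\R^{d+1}))$, so it is integrable on $[0,T']\times\R^{d+1}$ for any finite $T'\supseteq\operatorname{supp}\psi$, while $F'$ and $A'$ are bounded on the compact $\xi$-support of $\varphi$ by \eqref{condition-on-F}--\eqref{condition-on-A}. The contribution carrying $M^{-1}(\nabla\phi)(x/M)$ vanishes in the limit, while $\phi_M\nabla_x\varphi\to\nabla_x\varphi$ and $\phi_M\partial_\xi\varphi\to\partial_\xi\varphi$ boundedly, so dominated convergence handles the transport and the right-hand measure terms, the latter being controlled through \eqref{cond-infinity} by $\|\partial_\xi\varphi\|_\infty\int_{\operatorname{supp}_\xi\varphi}\nu(\xi)\,d\xi<\infty$. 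The only genuinely nonlocal point is the term $\int\chi(\xi;u)\,\psi\,A'(\xi)\,g_x[\phi_M\varphi]$: here Lemma \ref{Lemma-for-g_x[psi]-bis} gives $g_x[\phi_M\varphi]\to\phi(0)g_x[\varphi]=g_x[\varphi]$ pointwise while remaining bounded uniformly in large $M$ and in $\xi$ (using that $g$ maps $C_b^2$ into $C_b$ boundedly and that $\varphi$ has compact $\xi$-support), so dominated convergence applies again. The outcome is the identity, valid for every $\psi\in\mathcal{D}(\R)$,
\begin{equation*}
\int_0^\infty\!\!\int_{\R^{d+1}}\!\chi(\xi;u)\big(\psi'\varphi+\psi(F'\cdot\nabla_x\varphi-A'g_x[\varphi])\big)+\psi(0)\!\int_{\R^{d+1}}\!\chi(\xi;u_0)\varphi=\int_0^\infty\!\!\int_{\R^{d+1}}\!(m+n)\psi\,\partial_\xi\varphi.
\end{equation*}

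Finally I would specialize to a sequence $\psi=\psi_\delta\in\mathcal{D}(\R)$ with $0\le\psi_\delta\le1$, $\psi_\delta\equiv1$ on $[-1,T]$ and $\psi_\delta$ decreasing to $0$ on $[T,T+\delta]$, so that $\psi_\delta(0)=1$, $\psi_\delta\to\mathbf{1}_{[0,T]}$ pointwise, and on $[0,\infty)$ the derivative $\psi_\delta'\le0$ is supported in $[T,T+\delta]$ with $\int_0^\infty\psi_\delta'=-1$. The transport and measure integrals converge to their counterparts on $[0,T]$ by dominated convergence, and the initial term is unchanged since $\psi_\delta(0)=1$. The crux is the term $\int_0^\infty\!\int\chi(\xi;u)\psi_\delta'\varphi$, which equals $-\int_T^{T+\delta}(-\psi_\delta')\big(\int\chi(\xi;u(t,x))\varphi\,dx\,d\xi\big)dt$, a weighted average over $[T,T+\delta]$; invoking that $T$ is a Lebesgue point of $t\mapsto u(t)\in L^1(\R^d)$ together with the isometry $\|\chi(\cdot;u(t))-\chi(\cdot;u(T))\|_{L^1}=\|u(t)-u(T)\|_{L^1}$ from Lemma \ref{properties-of-khi}, this converges to $-\int_{\R^{d+1}}\chi(\xi;u(T,x))\varphi$. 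Rearranging the resulting identity yields \eqref{keycont}. The main obstacle is precisely this last step: producing the boundary term at $t=T$ in the correct $L^1$ sense, which is where the Lebesgue-point hypothesis and the $\chi$-isometry are essential; the nonlocal $g_x$ limit via Lemma \ref{Lemma-for-g_x[psi]-bis} is the other delicate point, whereas the remaining passages are routine dominated convergence.
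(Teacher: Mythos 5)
Your proof is correct and essentially identical to the paper's own argument: the paper likewise tests \eqref{rkf} with the product $\varphi_k(t)\,\phi(x/M)\,\varphi(x,\xi)$, where $\varphi_k(t)=1-\int_0^t\rho_k$ and $\rho_k$ approximates $\delta(t=T)$ (so that $\varphi_k'$ plays exactly the role of your $\psi_\delta'$ at the Lebesgue point $T$), and concludes via Lemma \ref{Lemma-for-g_x[psi]-bis} and the dominated convergence theorem. The only difference is the order of the two limits (the paper takes the time limit $k\to\infty$ before $M\to\infty$, you do the reverse), which is immaterial.
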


\begin{proof}
Take the test $\varphi_k(t) \phi(x/M) \varphi(x,\xi)$ in \eqref{rkf} where  $\varphi_k(t):=1
  - 
\int_0^t \rho_k$ for some standard mollifier $\rho_k=\rho_k(t)$ approximating 
  $\delta(t=T)$  
as $k \to \infty$, and $\phi \in \mathcal{D}(\R^d)$ satisfies   $\phi(0)=1$. Let then $k,M \to \infty$ successively by using Lemma \ref{Lemma-for-g_x[psi]-bis} 
  and the dominated convergence theorem.   
\end{proof}

Let us continue with standard results. The first one is the de La Vall\'ee Poussin criterion for weak compactness in $L^1$ (or equivalently weakly sequentially compactness by Eberlian  \v{S}mulian   theorem), cf. e.g. \cite[p. 19]{Mey66}. 

\begin{lem}\label{poussin}
For any ball $B \subset \R^d$, $\{u_k\}_k \subset L^1(B)$ is relatively weakly compact if and only if there exists a nonnegative and convex function $\Phi$ such that $\lim_{|\xi| \to \infty} \frac{\Phi(\xi)}{|\xi|}=\infty$ and $\{\Phi(u_k)\}_k$ is bounded in $L^1(B)$.
\end{lem}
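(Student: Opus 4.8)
The plan is to deduce the statement from the Dunford--Pettis theorem together with the classical two-sided construction underlying the de~La Vall\'ee Poussin criterion. Since $B$ is a ball, hence of finite Lebesgue measure, the Dunford--Pettis theorem identifies relative weak compactness of $\{u_k\}_k \subset L^1(B)$ with \emph{uniform integrability}, namely
\[
\lim_{R \to \infty} \sup_k \int_{\{|u_k|>R\}} |u_k| \, dx = 0;
\]
on a set of finite measure this modulus also bounds $\sup_k \|u_k\|_{L^1(B)}$, so $L^1$-boundedness is automatic. It thus suffices to show that uniform integrability is equivalent to the existence of a nonnegative convex $\Phi$ with $\Phi(\xi)/|\xi| \to \infty$ as $|\xi| \to \infty$ and $\sup_k \int_B \Phi(u_k) \, dx < \infty$.

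For the direction asserting that such a $\Phi$ yields uniform integrability, set $M := \sup_k \int_B \Phi(u_k) \, dx < \infty$ and fix $\eps>0$. By the superlinear growth of $\Phi$ I may choose $R>0$ so large that $\Phi(\xi) \geq (M/\eps)\,|\xi|$ whenever $|\xi| \geq R$. Then $|u_k| \leq (\eps/M)\,\Phi(u_k)$ on $\{|u_k|>R\}$, so that, using $\Phi \geq 0$,
\[
\int_{\{|u_k|>R\}} |u_k| \, dx \leq \frac{\eps}{M} \int_B \Phi(u_k) \, dx \leq \eps
\]
uniformly in $k$, which is exactly uniform integrability.

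The substantive direction constructs $\Phi$ from the uniform integrability modulus $g(R) := \sup_k \int_{\{|u_k|>R\}} |u_k| \, dx$. Since $g(R) \da 0$ as $R \to \infty$, I pick an increasing sequence $R_n \ua \infty$ with $g(R_n) \leq 2^{-n}$ and define the even function
\[
\Phi(\xi) := \sum_{n \geq 1} \bigl(|\xi|-R_n\bigr)^+ .
\]
Each summand is convex and nonnegative, hence so is $\Phi$; its right slope at $|\xi|$ equals $\#\{n : R_n \leq |\xi|\} \to \infty$, which forces $\Phi(\xi)/|\xi| \to \infty$ by convexity. The crucial point is the layer-cake identity, which turns the superlinear integral into a summable tail series: by Tonelli (all terms being nonnegative),
\[
\int_B \Phi(u_k) \, dx = \sum_{n \geq 1} \int_B \bigl(|u_k|-R_n\bigr)^+ dx \leq \sum_{n \geq 1} \int_{\{|u_k|>R_n\}} |u_k| \, dx \leq \sum_{n \geq 1} g(R_n) \leq 1
\]
uniformly in $k$, which completes the equivalence.

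The \textbf{main obstacle} is this forward construction, where one must reconcile three requirements at once: convexity of $\Phi$, the superlinear growth $\Phi(\xi)/|\xi| \to \infty$, and the uniform bound on $\int_B \Phi(u_k)$. The piecewise-linear choice $\Phi = \sum_n (|\cdot|-R_n)^+$ settles all three simultaneously precisely because the layer-cake identity decomposes $\int_B \Phi(u_k)$ into the terms $\int_B (|u_k|-R_n)^+ dx$, each dominated by the tail $g(R_n)$ that uniform integrability makes summable. The remaining points---convexity of the even extension and the interchange of sum and integral---are routine.
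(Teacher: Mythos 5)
Your proof is correct. There is, however, nothing in the paper to compare it against: Lemma \ref{poussin} is quoted as the classical de~La Vall\'ee Poussin criterion with a citation to Meyer's book and is never proved there, so your argument supplies a proof the paper omits. What you give is the standard self-contained route: Dunford--Pettis identifies relative weak compactness in $L^1(B)$ with the tail form of uniform integrability (valid precisely because $|B|<\infty$, which you correctly flag, since it makes the $L^1$ bound and the absolute-continuity formulation automatic consequences of the tail condition); the easy direction is the Chebyshev-type estimate $|u_k|\leq(\eps/M)\Phi(u_k)$ on $\{|u_k|>R\}$; and the substantive direction is the piecewise-linear series $\Phi(\xi)=\sum_{n\geq1}(|\xi|-R_n)^+$ with $g(R_n)\leq 2^{-n}$, where local finiteness of the sum, divergence of the right slopes, and the Tonelli-justified bound $\int_B(|u_k|-R_n)^+\,dx\leq g(R_n)$ settle convexity, superlinearity and the uniform bound at once. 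It is worth noting that this is exactly the construction the paper itself reuses in the proof of Proposition \ref{continuity-in-time}, where a ``regular version of $\xi\mapsto\sum_{k\geq1}(|\xi|-r_k)^+$'' is taken for the single datum $u_0$; so your proof is in the spirit the authors intend. Two cosmetic points you could patch: in the easy direction you divide by $M=\sup_k\int_B\Phi(u_k)\,dx$, which could vanish --- either replace $M$ by $\max\{M,1\}$, or observe that $M=0$ forces $|u_k|\leq R_0$ a.e.\ for any $R_0$ with $\Phi>0$ on $\{|\xi|\geq R_0\}$, making uniform integrability trivial; and you implicitly pass between weak compactness and weak sequential compactness, which is harmless by Eberlein--\v{S}mulian (the paper mentions this equivalence when stating the lemma).
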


To establish the strong convergence, we will argue on the weak-$\star$ limit of the kinetic functions and the properties below will be needed. 

\begin{lem}\label{perthame2}
Let us assume that $u_k \rightharpoonup \overline{u}$ in $L^1_{\rm loc}(\R^d)$--$w$ and $\chi(\xi;u_k(x)) \rightharpoonup \overline{\chi}(x,\xi)$ in $L^\infty(\R^{d+1})$--$w$$\star$. Then:
\begin{enumerate}[\rm (i)]
\item $\overline{\chi} \in L^1(B \times \R_\xi)$ for any ball $B \subset  \R^d$, 
\label{itemint}
\item $\overline{u}(x)=\int \overline{\chi}(x,\zeta) \, d\zeta$ and $\int_{-\infty}^\xi (\overline{\chi}(x,\zeta) - \chi(\zeta;\overline{u}(x))) \, d\zeta \leq 0$ for a.e. $(x,\xi)$, \label{prop-perth}
\item $u_k \to \overline{u}$ strongly in $L^1_{\rm loc}(\R^d)$ when $\overline{\chi}  = \chi(\xi;\overline{u})$. \label{itemstrong}
\end{enumerate}
\end{lem}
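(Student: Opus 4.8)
The plan is to lean on two structural features of the kinetic function. First, $\chi(\xi;u_k)$ satisfies the pointwise bounds $0\le\sgn(\xi)\chi(\xi;u_k)\le 1$, so the same holds in the weak-$\star$ limit: $0\le\sgn(\xi)\overline\chi\le 1$ a.e. Second, one has the tail identity $\int_{|\xi|>R}|\chi(\xi;u_k(x))|\,d\xi=(|u_k(x)|-R)^+$, which transfers control of the kinetic mass at large $|\xi|$ to control of the tail of $|u_k|$. Since $u_k\rightharpoonup\overline u$ weakly in $L^1(B)$ for every ball $B$, the sequence $\{u_k\}$ is relatively weakly compact in $L^1(B)$, so Lemma~\ref{poussin} provides a superlinear convex $\Phi$ with $\sup_k\int_B\Phi(u_k)<\infty$; this yields the uniform integrability $\sup_k\int_B(|u_k|-R)^+\,dx\to0$ as $R\to\infty$, the tightness in $\xi$ that underlies the whole proof. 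For part~(\ref{itemint}), I would test the weak-$\star$ convergence against the $L^1(\R^{d+1})$ function $\sgn(\xi)\mathbf{1}_B(x)\mathbf{1}_{|\xi|\le R}(\xi)$; the prelimit integrals equal $\int_B\min(|u_k|,R)\,dx\le\sup_k\|u_k\|_{L^1(B)}$, so the limit $\int_B\int_{|\xi|\le R}|\overline\chi|\,d\xi\,dx$ is bounded uniformly in $R$, and monotone convergence gives $\overline\chi\in L^1(B\times\R_\xi)$.

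For the identity in part~(\ref{prop-perth}), I would split, for $\psi\in\mathcal D(B)$, the integral $\int_B u_k\psi\,dx=\int\chi(\xi;u_k)\psi\,\mathbf{1}_B\mathbf{1}_{|\xi|\le R}+\int\chi(\xi;u_k)\psi\,\mathbf{1}_B\mathbf{1}_{|\xi|>R}$; the second piece is at most $\|\psi\|_\infty\int_B(|u_k|-R)^+\,dx$, negligible uniformly in $k$ for large $R$, while the first passes to the limit by weak-$\star$ convergence. Letting $k\to\infty$ and then $R\to\infty$ (using $\overline\chi\in L^1$) identifies $\overline u(x)=\int\overline\chi(x,\zeta)\,d\zeta$. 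For the inequality, I would isolate the elementary extremality of $\chi(\cdot;v)$: for any measurable $f$ with $0\le\sgn(\zeta)f(\zeta)\le1$ and $\int_\R f\,d\zeta=v$, one has $\int_{-\infty}^\xi f\,d\zeta\le\int_{-\infty}^\xi\chi(\zeta;v)\,d\zeta$ for every $\xi$. For $\xi\ge0$ this follows from $\int_{-\infty}^0 f\le0$ and $\int_0^\xi f\le\xi$ (giving the bound $\xi$) together with $\int_\xi^\infty f\ge0$ (giving the bound $v$), and symmetrically for $\xi\le0$; the right-hand side realizes exactly these minima in each sign configuration. Taking $f=\overline\chi(x,\cdot)$ and $v=\overline u(x)$ yields the claim for a.e.\ $(x,\xi)$.

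For part~(\ref{itemstrong}), Lemma~\ref{properties-of-khi}(ii) gives $\|u_k-\overline u\|_{L^1(B)}=\int_B\int_\R|\chi(\xi;u_k)-\chi(\xi;\overline u)|\,d\xi\,dx$, and since both arguments are kinetic functions with $\chi^2=|\chi|$, the integrand equals $|\chi(\xi;u_k)|+|\chi(\xi;\overline u)|-2\chi(\xi;u_k)\chi(\xi;\overline u)$. I would pass to the limit termwise: the first term tends to $\int_B\int_\R|\overline\chi|\,d\xi\,dx=\int_B|\overline u|\,dx$ by the tightness argument of parts~(\ref{itemint})--(\ref{prop-perth}) and $\overline\chi=\chi(\xi;\overline u)$; the cross term converges because $\chi(\xi;\overline u)\mathbf{1}_B\in L^1(\R^{d+1})$ is an admissible weak-$\star$ test function and $\overline\chi\,\chi(\xi;\overline u)=\chi(\xi;\overline u)^2=|\chi(\xi;\overline u)|$, so it also tends to $\int_B|\overline u|$. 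The three contributions cancel, giving $u_k\to\overline u$ in $L^1(B)$ for every ball, i.e.\ in $L^1_{\mathrm{loc}}$.

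I expect the only real obstacle to be that the natural test functions here are constant in $\xi$, hence not integrable on $\R^{d+1}$, so weak-$\star$ convergence cannot be applied to them directly; the argument therefore hinges on the uniform integrability in $\xi$ from Lemma~\ref{poussin}, which permits truncating to $|\xi|\le R$, invoking weak-$\star$ convergence, and then removing the truncation uniformly in $k$. The sign and magnitude bounds on $\overline\chi$ and the extremality of $\chi(\cdot;v)$ are elementary by comparison.
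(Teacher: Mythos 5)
Your proof is correct; note, though, that the paper does not actually prove this lemma --- it defers entirely to Perthame's book, citing \cite[Lem 2.3.1]{Per09} for part (i), \cite[Lem 2.3.3]{Per09} for the identity in part (ii) and for part (iii), and \cite[Thm 2.2.1]{Per09} for the inequality in part (ii). What you have done is reconstruct those three external ingredients from scratch, and each reconstruction is sound. The engine of your argument, the tail identity $\int_{|\xi|>R}|\chi(\xi;u_k)|\,d\xi=(|u_k|-R)^{+}$ combined with Lemma~\ref{poussin} (applicable because a weakly convergent sequence in $L^1(B)$ is relatively weakly compact), correctly supplies the uniform-in-$k$ tightness in $\xi$ that lets you truncate to $|\xi|\le R$, test the weak-$\star$ convergence against genuine $L^1(\R^{d+1})$ functions, and then remove the truncation; this is precisely the mechanism behind the two cited lemmas of \cite{Per09}. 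Your ``elementary extremality'' step --- that $0\le \sgn(\zeta)f(\zeta)\le 1$ and $\int_\R f\,d\zeta=v$ force $\int_{-\infty}^{\xi}f\,d\zeta\le \int_{-\infty}^{\xi}\chi(\zeta;v)\,d\zeta$, because the left side is bounded by $\min(\xi,v)$ for $\xi\ge 0$ and by $\min(0,v-\xi)$ for $\xi\le 0$, and these are exactly the values of the right side --- is a correct, self-contained proof of what is Brenier's lemma \cite[Thm 2.2.1]{Per09}. Likewise, for part (iii), the expansion $|\chi(\xi;u_k)-\chi(\xi;\overline{u})|=|\chi(\xi;u_k)|+|\chi(\xi;\overline{u})|-2\,\chi(\xi;u_k)\chi(\xi;\overline{u})$ with termwise passage to the limit (truncation plus tightness for the first term; the admissible test function $\chi(\xi;\overline{u})\mathbf{1}_B\in L^1(\R^{d+1})$ for the cross term) is the standard argument and is carried out correctly. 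Your route buys a fully self-contained, checkable proof; the paper's route buys brevity at the cost of sending the reader to three statements in \cite{Per09}.
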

 
See \cite[Lem  2.3.1]{Per09} for the proof of \eqref{itemint}, \cite[Lem 2.3.3]{Per09} for the first part of \eqref{prop-perth} and \eqref{itemstrong}, and \cite[Thm 2.2.1]{Per09} for the second part of \eqref{prop-perth}.
%
We are now in position to prove the time continuity of kinetic solutions at $t=0$ by reproducing the arguments of \cite{ChPe03,Per09}.

\begin{proof}[Proof of Proposition \ref{continuity-in-time}]
 Let $u_0 \in L^1(\R^d)$, $u \in L^\infty(\R^+;L^1(\R^d))$ and $0 \leq m \in M_{\rm loc}^1([0,\infty) \times \R^{d+1})$ be such that \eqref{cond-infinity} holds. We have to prove that $\eqref{rkf} \Longleftrightarrow \eqref{kds}$, the difference being in the sense of the initial datum. 

\medskip

{\bf Claim $\eqref{kds} \Rightarrow \eqref{rkf}$.} This follows from a standard approximation procedure of test functions in $\mathcal{D}([0,\infty) \times \R^{d+1})$ by test functions in $\mathcal{D}((0,\infty) \times \R^{d+1})$. The details are left to the reader.

 \medskip 
  
{\bf Claim $\eqref{rkf} \Rightarrow \eqref{kds}$:} {\it Strong continuity in $L^1_{\rm loc}(\R^d)$.} The main property to establish is the second line of \eqref{kds}, on which we focus now. Consider an even, nonnegative and strictly convex $C^\infty$ function $\Phi$ such that $\lim_{|\xi| \to \infty} \frac{\Phi(\xi)}{|\xi|}=\infty$ and
$\int\Phi (u_0)<\infty$.\footnote{
Take e.g. a regular version of $\xi \mapsto \sum_{k \geq 1} (|\xi|-r_k)^+$ where $\sum_{k \geq 1} \int_{|u_0| \geq r_k} |u_0|<\infty$ for some fixed $0=r_1<r_2<\dots$}. We claim that for any Lebesgue point $T$ of $t \mapsto u(t)$,
\begin{equation}\label{Inequality-in-Lemma-int-Phi(u)+int-Phi''(xi)(m+n)-less-than}
\int_{\R^d}\Phi(u(T,x))\,
dx+\int_0^T\int_{\R^{d+1}}\Phi''(\xi)(m+n)(t,x,\xi)\,
dt\,dx\,d\xi= \int_{\R^d}\Phi(u_0)\, dx.
\end{equation}
To prove this, take $\varphi(x,\xi):= \phi(x/M) \psi(\xi/R) \Phi'(\xi)$  in \eqref{keycont} with $\phi \in \mathcal{D}(\R^d)$ and $0 \leq \psi \in \mathcal{D}(\R)$ such that $\psi$ is even, nonincreasing on $\R^+$, and $\phi(0)=\psi(0)=1$. Letting $M\to \infty$, 
\begin{multline*}
 \int_{\R^{d+1}} \chi(\xi;u_0(x)) \psi(\xi/R) \Phi'(\xi)\, dx \, d\xi=\int_{\R^{d+1}} \chi(\xi;u(T,x)) \psi(\xi/R) \Phi'(\xi)\,
dx \, d\xi\\
+\int_0^T\int_{\R^{d+1}}(m+n)(t,x,\xi) \psi(\xi/R) \Phi''(\xi)\,
dt\,dx\,d\xi\\
+\frac{1}{R} \int_0^T\int_{\R^{d+1}}(m+n)(t,x,\xi) \psi'(\xi/R) \Phi'(\xi)\,
dt\,dx\,d\xi,
\end{multline*}
thanks to Lemma \ref{Lemma-for-g_x[psi]-bis} as well as \eqref{cond-infinity} and the dominated convergence theorem. To continue, we need that to cancel the last integral as $R \to \infty$. This will be the case if $\nu \Phi' \in L^\infty_0(\R)$   with $\nu$ from \eqref{cond-infinity}. For this sake,   it suffices to fix a smaller $\Phi$ from the begining, if necessary, in order to have also this property. Letting $R \to \infty$ then implies
\eqref{Inequality-in-Lemma-int-Phi(u)+int-Phi''(xi)(m+n)-less-than} thanks to the monotone convergence theorem to handle all the other terms.

Consider now Lebesgue points $t_k$ of $t \mapsto u(t)$ such that $t_k \da 0$ as $k \to \infty$. By \eqref{Inequality-in-Lemma-int-Phi(u)+int-Phi''(xi)(m+n)-less-than} and Lemma \ref{poussin}, $\{u_k:=u(t_k,\cdot)\}_k$ is relatively weakly compact in $L^1_{\rm loc}(\R^d)$. Hence
\begin{equation*}
u_k \rightharpoonup \overline{u} \quad \text{in} \quad L^1_{\rm loc}(\R^d)\text{--}w, \quad  \chi(\xi;u_k(x)) \rightharpoonup \overline{\chi}(x,\xi) \quad \text{in} \quad L^\infty(\R^{d+1})\text{--}w\star,
\end{equation*}
up to taking a subsequence if necessary. Taking eventually another subsequence, 
\begin{equation*}
\mbox{$\displaystyle\int_0^{t_k} m(t,x,\xi) \, dt \rightharpoonup \overline{m}(x,\xi) \quad \text{in} \quad M^1_{\rm loc}(\R^{d+1})$--$w$}
\end{equation*}
thanks to \eqref{cond-infinity}. 
Letting $T=t_k\downarrow 0$ in \eqref{keycont} implies that
\begin{equation*}
 \begin{split} 
\int_{\R^{d+1}} \chi(\xi;u_0(x))\varphi(x,\xi) \, dx \, d\xi & = \int_{\R^{d+1}}\overline{\chi}(x,\xi)  \varphi(x,\xi) \, dx \,d\xi\\
 & \quad + \int_{\R^{d+1}}\overline{m}(x,\xi) \partial_\xi \varphi(x,\xi)\, dx\,d\xi,
   \end{split}
\end{equation*}
that is 
$\overline{\chi}(x,\xi)-\chi(\xi;u_0(x))=\partial_\xi
\overline{m}(x,\xi)$. Since $\int_{\R^d} \overline{m}(x,\xi) \, dx \leq \nu(\xi) \in L^\infty_0(\R)$ by stability of \eqref{cond-infinity} at the weak limit, 
$\overline{m}(x,\xi)=\int_{-\infty}^{\xi} (\overline{\chi}(x,\zeta)-\chi(\zeta;u_0(x))) \, d\zeta$
thanks to the item \eqref{itemint} of Lemma \ref{perthame2}. The limit as $\xi \to +\infty$ then implies that $\int \overline{\chi}(x,\zeta) \, d\zeta=u_0(x)$ and it follows that $\overline{u} = u_0$ and $\overline{m} \leq 0$ by \eqref{prop-perth}. 
This nonnegative measure is thus zero which implies that $\overline{\chi}(x,\xi)=\chi(\xi;u_0(x))$ and $u_k \to u_0$ strongly in $L^1_{\rm loc}(\R^d)$ by \eqref{itemstrong}. 


\medskip 
  
{\bf Claim $\eqref{rkf} \Rightarrow \eqref{kds}$:} {\it Strong continuity in $L^1(\R^d)$.} 
Let $R>0$ be arbitrarily fixed and let us prove that
\begin{equation}\label{claim-int-inf}
\sup_{k \in \mathbb{N}} \int_{|x| \geq M} |T_R(u_k)| \, dx \to 0 \quad \mbox{as} \quad M \to \infty,
\end{equation}
with $T_R$ from \eqref{def-truncature}. Consider a regularization of $\frac{d}{d\xi} |T_R(\xi)|$ given by $S_R \ast 
\theta_\delta 
$ where $S_R(0)=0$,
$$
S_R'(\xi):=
\begin{cases}
\sgn(\xi) & \mbox{for $|\xi| \leq R$,}\\
\sgn(\xi)(R+1-|\xi|) & \mbox{for $R<|\xi|<R+1$},\\
0 & \mbox{otherwise,} \\
\end{cases}
$$and $0 \leq 
\theta 
_\delta \in \mathcal{D}(\R)$ is an approximate unit as $\delta \da 0$. Note that
\begin{equation}\label{needed}
|\cdot| \geq S_R(\cdot) \geq |T_R(\cdot)| \quad \text{and} \quad (S_R \ast 
\theta_\delta 
)'' \geq-\mathbf{1}_{(-R-1,-R)\cup(-R,R+1)} \ast 
\theta_\delta 
.
\end{equation}
Now choose $\varphi(x,\xi):=\phi_M(x) (S_R \ast 
\theta_\delta 
)' (\xi)$
in \eqref{keycont} where $\phi_M(x)=\phi(x/M)$ with
$$
\phi(x):=
\begin{cases}
0 & \mbox{for } |x| \leq 1/2,\\
1 & \mbox{for } |x| \geq 1,\\
\end{cases}
\quad \mbox{and} \quad 0\leq \phi \leq 1 \quad \mbox{elsewhere.}
$$Doing this with $T=t_k$, we infer that
\begin{equation*}
\begin{split}
& \int_{|x|>\frac{M}{2}} S_R \ast 
\theta_\delta 
(u_0(x)) \, dx +C_R \int_0^{t_k}\int_{\R^{d+1}}  |\chi(\xi;u)| \left(|\nabla \phi_M|+|g[\phi_M]|\right) dt \,dx\,d\xi\\
  &  \geq \int_{|x|>M}S_R \ast 
  \theta_\delta 
  (u(t_k,x)) \, dx+\int_0^{t_k}\int_{\R^{d+1}}(m+n)(t,x,\xi) \phi_M(x)  (S_R \ast 
  \theta_\delta 
  )'' (\xi)
   \, dt\,dx\,d\xi,
\end{split}
\end{equation*}
for some Lipschitz constant $C_R$ of $F$ and $A$ on the support of $(S_R \ast 
\theta_\delta 
)'$, thus independent of small $\delta$. 
Letting $\delta \to 0$ while using \eqref{needed} then implies that
\begin{equation*}
\begin{split}
\int_{|x|>M} |T_R(u_k)| \, dx & \leq  \int_{|x|>\frac{M}{2}} |u_0| \, dx\\
& \quad +C_R\int_0^{t_k}\int_{\R^{d+1}}  |\chi(\xi;u)|  \left(|\nabla \phi_M|+|g[\phi_M]|\right) dt \,dx\,d\xi\\
& \quad + \int_0^{t_k}\int_{|x|>\frac{M}{2}} \int_{
R 
  \leq   
|\xi|
  \leq  
R+1} (m+n) \, dt\,dx\,d\xi.
\end{split}
\end{equation*}
The claim \eqref{claim-int-inf} is now obtained by using Lemma \ref{Lemma-for-g_x[psi]-bis} to cancel the penultimate integral as $M \to \infty$, as well as \eqref{cond-infinity} and the dominated convergence theorem for the last integral.

\medskip 
  
{\bf Conclusion.} By \eqref{claim-int-inf} and the previous $L^1_{\rm loc}$ convergence, $T_R(u_k) \to T_R(u_0)$ in $L^1(\R^d)$ for any $R>0$ up to a subsequence. Hence  $u_k \to u_0$ in $L^1(\R^d)$ since
$
\lim_{R \to \infty} \sup_{k} \int_{|u_k| > R} |u_k|=0
$
by \eqref{Inequality-in-Lemma-int-Phi(u)+int-Phi''(xi)(m+n)-less-than}. This completes the proof of the middle line of \eqref{kds}. Since the first line is immediate and the last one is a consequence of \eqref{Inequality-in-Lemma-int-Phi(u)+int-Phi''(xi)(m+n)-less-than}, the whole proof is complete.
\end{proof}

\bibliographystyle{plain}

\end{document}